\documentclass[12pt,
psamsfonts]{amsart}

\usepackage{amssymb,amsfonts,amsmath}
\usepackage[all,arc]{xy}
\usepackage{enumerate}
\usepackage{mathrsfs}
\usepackage{fullpage}
\usepackage{xspace}
\usepackage[margin=1.0in]{geometry}
\usepackage{tcolorbox}
\usepackage{tikz-cd}
\usepackage{color}
\usepackage{aliascnt}
\usepackage[foot]{amsaddr}
\usepackage{hyperref}

\newtheorem{thm}{Theorem}[section]

\newaliascnt{theo}{thm}
\newtheorem{theo}[theo]{Theorem}
\aliascntresetthe{theo}

\newaliascnt{cor}{thm}
\newtheorem{cor}[cor]{Corollary}
\aliascntresetthe{cor}

\newaliascnt{prop}{thm}
\newtheorem{prop}[prop]{Proposition}
\aliascntresetthe{prop}

\newaliascnt{lem}{thm}
\newtheorem{lem}[lem]{Lemma}
\aliascntresetthe{lem}

\newaliascnt{conj}{thm}
\newtheorem{conj}[conj]{Conjecture}
\aliascntresetthe{conj}

\newaliascnt{que}{thm}
\newtheorem{que}[que]{Question}
\aliascntresetthe{que}

\newaliascnt{ass}{thm}
\newtheorem{ass}[ass]{Assumption}
\aliascntresetthe{ass}

\theoremstyle{remark}
\newaliascnt{rem}{thm}
\newtheorem{rem}[rem]{Remark}
\aliascntresetthe{rem}

\theoremstyle{definition}
\newtheorem{defn}[thm]{Definition}

\newtheorem{exmp}[thm]{Example}

\newtheorem{notn}[thm]{Notation}

\newtheorem{conv}[thm]{Convention}

\newcommand{\Z}{\mathbb{Z}\xspace}
\newcommand{\Q}{\mathbb{Q}\xspace}
\newcommand{\G}{\mathbb{G}\xspace}
\DeclareMathOperator{\Spec}{Spec}
\DeclareMathOperator{\res}{res}

\DeclareMathOperator{\ord}{ord}

\DeclareMathOperator{\dv}{div}

\DeclareMathOperator{\img}{Im}

\DeclareMathOperator{\Hom}{Hom}
\DeclareMathOperator{\Pic}{Pic}
\DeclareMathOperator{\Gal}{Gal}

\DeclareMathOperator{\Alb}{Alb}
\makeatletter
\let\c@equation\c@thm
\makeatother
\numberwithin{equation}{section}

\bibliographystyle{plain}

\title{Zero-cycles on a product of elliptic curves over a $p$-adic field}

\author[*]{Evangelia Gazaki*} \address[*]{\normalfont Department of Mathematics, University of Virginia, Kerchof Hall, 141 Cabell Drive, Charlottesville, VA, 22904, USA. Email: \texttt{eg4va@virginia.edu}}
\author{Isabel Leal**}\address[**]{\normalfont Email: \texttt{isabel.leal.il@gmail.com}}
\begin{document}

\begin{abstract}
	We consider a product $X=E_1\times\cdots\times E_d$ of elliptic curves over a finite extension $K$ of $\Q_p$ with a combination of good or split multiplicative reduction. We assume that at most one of the elliptic curves has supersingular reduction. Under these assumptions, we prove that the Albanese kernel of $X$ is the direct sum of a finite group and a divisible group, extending work of Raskind and Spiess to cases that include supersingular phenomena. Our method involves studying the kernel of  the cycle map $CH_0(X)/p^n\rightarrow H^{2d}_{\text{\'{e}t}}(X, \mu_{p^n}^{\otimes d})$. We give specific criteria that guarantee this map is injective for every $n\geq 1$. When all curves have good ordinary reduction, we show that it suffices to extend to a specific finite extension $L$ of $K$ for these criteria to be satisfied. This extends previous work of Yamazaki and Hiranouchi. 
\end{abstract}

\maketitle

\section{Introduction} 
Let $X$ be a smooth, projective, and geometrically integral variety over a field $K$ having a $K$-rational point. We consider the group $CH_0(X)$ of zero-cycles on $X$ modulo rational equivalence and let $A_0(X)$ be the subgroup of zero-cycles of  degree zero. There is an abelian variety, $\Alb_X$, called the \textit{Albanese variety} of $X$, universal for maps from $X$ to abelian varieties, and an induced homomorphism, \[A_0(X)\rightarrow\Alb_X(K),\] called the \textit{Albanese map} of $X$. When $X$ is a curve, the group $A_0(X)$ coincides with $\Pic^0(X)$ and the Abel-Jacobi theorem tells us that the above map is an isomorphism. In higher dimensions, however,  the situation is far more mysterious and the Albanese map can have a very significant kernel, which we denote by $T(X)$. 

When $K$ is an algebraic number field, the fascinating Bloch-Beilinson conjectures predict that the Albanese kernel $T(X)$ is a torsion group. At the same time, the group $CH_0(X)$ is expected to be a finitely generated abelian group, so $T(X)$ must be  finite. On the other hand, the Albanese kernel is expected to be enormous for varieties with positive geometric genus over large fields like $\mathbb{C}$ or $\Q_p$. 

In this work, the case of interest to us is that of a $p$-adic base field $K$. In this case, the  expected structure of $T(X)$ is given by the following conjecture.
\begin{conj}\label{CTconj} Let  $X$ be a smooth projective and geometrically integral variety over a finite extension of $\Q_p$. The Albanese kernel $T(X)$ is the direct sum of a finite group and a divisible group.
\end{conj}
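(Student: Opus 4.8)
This statement is Colliot-Th\'el\`ene's conjecture on zero-cycles over local fields, and it is open in the generality asserted; accordingly, what follows is the strategy one would pursue, together with an honest account of the step that resists a general treatment. The main theorems of the present paper may be regarded as resolving that decisive step in the special case of products of elliptic curves with prescribed reduction.

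The plan is first to make a group-theoretic reduction. Write $T=T(X)$ and $d=\dim X$, and let $T_{\mathrm{div}}=\bigcap_{n\geq 1}nT$ be the maximal divisible subgroup. Since divisible abelian groups are injective $\Z$-modules, the extension $0\to T_{\mathrm{div}}\to T\to T/T_{\mathrm{div}}\to 0$ splits, so it suffices to prove that the reduced quotient $R:=T/T_{\mathrm{div}}$ is \emph{finite}. Because $T_{\mathrm{div}}=\ell\,T_{\mathrm{div}}\subseteq \ell T$ for every prime $\ell$, one has $R/\ell R\cong T/\ell T$, so finiteness of $R$ reduces to two claims: (i) $T/\ell T=0$ for all but finitely many $\ell$, and (ii) for each remaining $\ell$, the groups $T/\ell^nT$ are finite of bounded order as $n\to\infty$, so that the $\ell$-primary part of $R$ is finite. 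Granting (i) and (ii), one assembles $T\cong F\oplus D$ with $F$ finite and $D$ divisible.

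The prime-to-$p$ part of (i) and (ii) is the part one expects to be accessible. For a prime $\ell\neq p$ the plan is to run the cycle class map
\[
CH_0(X)/\ell^n \longrightarrow H^{2d}_{\et}(X,\mu_{\ell^n}^{\otimes d}),
\]
whose target is finite, $X$ being smooth and projective over the $p$-adic field $K$. Restricting to $T(X)$, the point is to bound the kernel of this map independently of $n$; here one would invoke the Bloch--Kato--Merkurjev--Suslin description of $K$-theory modulo $\ell$ together with the finiteness results of Saito--Sato and Colliot-Th\'el\`ene for zero-cycles over $p$-adic fields, which yield that $T/\ell^nT$ is finite and eventually stationary and that $T/\ell T$ vanishes for almost all $\ell\neq p$. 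This would establish (i) and (ii) away from the residue characteristic.

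The decisive and, in general, unresolved step is $\ell=p$, and this is where I expect the essential difficulty to lie. Here the target $H^{2d}_{\et}(X,\mu_{p^n}^{\otimes d})$ no longer controls the cycle map in the same clean way: the $p$-adic cycle map can fail to be injective, and $p$-adic phenomena tied to the geometry of the special fiber (ordinary versus supersingular reduction, wild ramification, failure of semistability) obstruct any naive finiteness argument. A complete treatment would require replacing \'etale cohomology by syntomic or $p$-adic motivic cohomology and importing $p$-adic Hodge theory --- the Bloch--Kato exponential, the Fontaine--Messing comparison, and a fine analysis of the reduction --- in order to prove a uniform bound on the $p$-cotorsion $T/p^nT$. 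No such general statement is presently known, and this is exactly the gap that forces the hypotheses of the main theorems of this paper, where the $p$-part is controlled only for products of elliptic curves under the stated reduction conditions (at most one supersingular factor). I therefore expect the general conjecture to remain open pending a uniform $p$-adic finiteness theorem for the kernel of the $p$-adic cycle map, with the prime-to-$p$ steps serving as the routine input.
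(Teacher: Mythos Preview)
The statement is a \emph{conjecture}, and the paper presents it as such: there is no proof of \autoref{CTconj} in the paper, only proofs of special cases (\autoref{maintheoremintro}). You correctly recognize this and offer a strategy rather than a proof, which is the appropriate response.

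Two points of comparison with the paper are worth making.

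First, your treatment of the prime-to-$p$ part undersells what is known. You phrase steps (i) and (ii) for $\ell\neq p$ as a ``plan'' to be carried out via the cycle map and Bloch--Kato; in fact, as the paper's introduction records, Saito--Sato (under a model hypothesis) and then Colliot-Th\'el\`ene (in general) have already proved that $A_0(X)$, and hence $T(X)$, is the direct sum of a finite group and a group that is $m$-divisible for every $m$ prime to $p$. This immediately gives your (i) and (ii) for all $\ell\neq p$: if $T\simeq F'\oplus D'$ with $F'$ finite and $D'$ prime-to-$p$ divisible, then $T/\ell^nT\simeq F'/\ell^nF'$ for $\ell\neq p$, which is bounded by $|F'|$ and vanishes once $\ell\nmid |F'|$. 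So the entire difficulty really is concentrated at $\ell=p$, not merely ``expected'' to be.

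Second, for the $p$-part you sketch a route through syntomic cohomology, Fontaine--Messing, and the Bloch--Kato exponential. The paper's actual method in the cases it settles is quite different: it uses the Raskind--Spiess identification of the graded pieces of $CH_0$ of a product of curves with Somekawa $K$-groups $K(K;E_{i_1},\dots,E_{i_r})$, reduces the question to showing that $p^NK(K;E_1,E_2)$ is $p$-divisible for some $N$, and attacks this via explicit Mackey-functor decompositions of $E_i/p$ in terms of unit filtrations $\overline{U}^j$, together with classical local class field theory for the Galois symbol. No $p$-adic Hodge theory enters.

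A minor technical correction: your $T_{\mathrm{div}}:=\bigcap_{n\geq 1}nT$ is the first Ulm subgroup, not in general the maximal divisible subgroup, and it need not itself be divisible, so you cannot invoke injectivity to split it off. The clean reduction is to let $D$ be the maximal divisible subgroup (the sum of all divisible subgroups), which is genuinely divisible and hence injective; then $T\simeq D\oplus R$ automatically, and $T/nT\simeq R/nR$ as you want. The two subgroups coincide \emph{a posteriori} if the conjecture holds, but not \emph{a priori}.
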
 A first version of this conjecture was formulated by Colliot-Th\'{e}l\`{e}ne (\cite{Colliot-Thelene1995}), and a later one by Raskind and Spiess (\cite{Raskind/Spiess2000}). In fact, Raskind and Spiess established the conjecture for a product $X=C_1\times\cdots\times C_d$ of smooth projective curves  all of whose Jacobians have a mixture of split multiplicative and good ordinary reduction.

More recently, S. Saito and K. Sato (\cite{Saito/Sato2010}) proved a weaker form of this conjecture. Namely, if $k$ is the residue field of the $p$-adic field $K$, they   established that, when $X$ has a regular projective flat model $\mathcal{X}$ over the ring of integers, $\mathcal{O}_K$, on which the reduced subscheme of the divisor $\mathcal{X}\otimes_{\mathcal{O}_K}k$ has simple normal crossings, the group $A_0(X)$ is the direct sum of a finite group and a group that is $m$-divisible for every integer $m$ coprime to the residue characteristic. The result has since been extended (\cite{CT2011}) to every smooth projective variety $X$ over a $p$-adic field. \autoref{CTconj} is still very open though and we do not have any general method to prove that the quotients $T(X)/p^n$ are ``small''. 

In this paper we focus on the case of a product $X=E_1\times\cdots\times E_d$ of elliptic curves over a $p$-adic field $K$. Following the method introduced by Raskind and Spiess in \cite{Raskind/Spiess2000}, 
we manage to  extend their result to include also supersingular reduction phenomena.  Our first result is a proof of \autoref{CTconj} in  the following case. 

\begin{theo}\label{maintheoremintro} Let $E_1,\cdots, E_d$ be elliptic curves over a $p$-adic field $K$ with either good or split multiplicative reduction. We assume that at most one of the curves has supersingular reduction. Then 
	the Albanese kernel, $T(X)$, of the product $X=E_1\times\cdots\times E_d$ is the direct sum of a finite group and a divisible group. 
\end{theo}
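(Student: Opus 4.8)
\emph{Proof strategy.} We follow the method of Raskind and Spiess \cite{Raskind/Spiess2000}, replacing their good ordinary hypothesis by a direct study of the $p$-adic cycle class map. First we reduce to the prime $p$: by Saito--Sato \cite{Saito/Sato2010}, as extended in \cite{CT2011}, $A_0(X)$ and hence its subgroup $T(X)$ is divisible by every integer prime to $p$ and has finite prime-to-$p$ torsion. Since a divisible subgroup of an abelian group is always a direct summand, it then suffices to show that $T(X)\otimes_{\Z}\Z_p$ is finite, equivalently that the finite groups $T(X)/p^nT(X)$ have order bounded independently of $n$: if $N=\sup_n\#\bigl(T(X)/p^nT(X)\bigr)<\infty$ is attained at $n_0$, then $p^{n_0}T(X)=\bigcap_n p^nT(X)$ is $p$-divisible, hence divisible (being also prime-to-$p$ divisible), and $T(X)/p^{n_0}T(X)$ is finite.

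For each $n$ we use the cycle class map $\rho_n\colon CH_0(X)/p^n\to H^{2d}_{\text{\'{e}t}}(X,\mu_{p^n}^{\otimes d})$, whose target is a finite group. Once every $\rho_n$ is injective, $\varprojlim_n CH_0(X)/p^n$ embeds into the finitely generated $\Z_p$-module $H^{2d}_{\text{\'{e}t}}(X,\Z_p(d))$ of continuous \'etale cohomology. Since $CH_0(X)=\Z\oplus A_0(X)$, since $\varprojlim_n CH_0(X)/p^n$ has $\Z_p$-rank at least $1$ coming from the degree map, and since the Albanese image inside $\Alb_X(K)\cong\Z_p^{\,d[K:\Q_p]}\oplus(\text{finite})$ accounts for a further explicit rank, boundedness of $\#\bigl(T(X)/p^n\bigr)$ is equivalent to: (i) each $\rho_n$ is injective, and (ii) the part of $\img(\rho_n)$ not coming from the class of a closed point nor from $\Alb_X(K)$ has $\Z_p$-rank zero in the limit.

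To prove (i)--(ii) we decompose along the factors of $X$. By Raskind--Spiess, $CH_0(X)$, and inside it $T(X)$, is a direct sum over subsets $S\subseteq\{1,\dots,d\}$ of groups built from the Somekawa $K$-groups $K(K;\{E_i\}_{i\in S})$, the terms with $|S|\ge 2$ making up $T(X)$. On the cohomology side, the K\"unneth formula and the Hochschild--Serre spectral sequence for $\overline X\to X$ express the relevant graded pieces of $H^{2d}_{\text{\'{e}t}}(X,\Z_p(d))$ through the Galois cohomology groups $H^{j}\bigl(K,\bigotimes_{i\in S}T_pE_i\,(m)\bigr)$ for suitable Tate twists $m$. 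Thus (i) and (ii) reduce to a package of statements about the $G_K$-modules $M_S=\bigotimes_{i\in S}T_pE_i$ with $|S|\ge 2$: finiteness or vanishing of the relevant $H^0(K,M_S(m))$ and $H^2(K,M_S(m))$, together with injectivity modulo $p^n$ with bounded cokernel of a connecting homomorphism. These are precisely the \emph{criteria} referred to in the abstract.

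It remains to verify the criteria under the reduction hypotheses, and this is the main obstacle. A factor $E_i$ with good ordinary or split multiplicative reduction yields an ordinary (resp. Tate) module $T_pE_i$, an extension of an unramified rank-one module by a rank-one module of Hodge--Tate weight $1$; for multidegrees supported on such factors the needed $H^0$ and $H^2$ vanish by a weight argument and the $H^1$ contributions are controlled exactly as in \cite{Raskind/Spiess2000}. For the at most one supersingular factor $E_{i_0}$, the module $T_pE_{i_0}$ is irreducible, Hodge--Tate with weights $\{0,1\}$, and has no unramified subquotient; tensoring it with ordinary or trivial modules still gives the required vanishing, the Hodge--Tate obstruction to a Tate class persisting. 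The hypothesis that at most one factor is supersingular enters exactly here: with two supersingular factors $E_{i_0},E_{i_1}$, the module $T_pE_{i_0}\otimes T_pE_{i_1}$ may contain a copy of $\Z_p(1)$ as a subquotient, so a corresponding twisted $H^2$ fails to be finite and boundedness of $T(X)/p^n$ can break down. Granting the criteria, (i)--(ii) show that $T(X)/p^n$ is finite of bounded order, and the first reduction gives the theorem. The technical core — and the hardest part — is this uniform Galois-cohomology vanishing for tensor products of at most one supersingular Tate module with arbitrarily many ordinary or Tate ones, obtained by separating the totally ordinary multidegrees (essentially Raskind--Spiess) from those meeting the supersingular factor (a Hodge--Tate weight computation), with one supersingular factor being the exact threshold at which a Tate class can intrude.
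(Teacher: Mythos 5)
Your overall frame (handle the prime-to-$p$ part by Saito--Sato/Raskind--Spiess, then bound the orders of $T(X)/p^n$) is the right shape, but the technical core of your argument is not correct. You reduce everything to a claimed vanishing of $H^0$ and $H^2$ of $K$ with coefficients in (twists of) $\bigotimes_{i\in S}T_pE_i$, ``by a weight argument'' resp. a Hodge--Tate weight computation. Over a $p$-adic field with $p$-power coefficients no such vanishing holds: by local Tate duality and the Weil pairing, $H^2\bigl(K,E_1[p^n]\otimes E_2[p^n]\bigr)\simeq \Hom_{G_K}\bigl(E_1[p^n],E_2[p^n]\bigr)^\star$, which is nonzero and of order growing with $n$ in cases squarely covered by the theorem (e.g.\ $X=E\times E$ with $E$ ordinary and $E[p^n]\subset E(K)$ gives $(\Z/p^n)^{\oplus 4}$); consistently, the paper's own computation (\autoref{image1}, Example \ref{exampleintro}) shows the image of the Galois symbol is nontrivial and the finite summand of $T(E\times E)$ can be $\Z/p^n\neq 0$. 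Weight/good-reduction arguments only kill the cohomology for $\ell\neq p$ — that is exactly Raskind--Spiess's \cite[Theorem 3.5]{Raskind/Spiess2000}, already used for the prime-to-$p$ reduction — so your step (ii) and the asserted ``uniform Galois-cohomology vanishing'' collapse precisely where the theorem is hard, namely at $\ell=p$. Relatedly, your explanation of the hypothesis ``at most one supersingular factor'' is off: a $\Z_p(1)$-type subquotient already occurs in $T_pE_1\otimes T_pE_2$ for two ordinary curves; the real obstruction with two supersingular factors is that the projection-formula (Mackey) relations no longer force $T(E_1\times E_2)/p$ to be finite, not the appearance of a Tate class.

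There is a second gap: your route needs injectivity of $c_{p^n}$ for all $n$ over $K$ itself, but under the hypotheses of the theorem this is not available and is likely false at the level used to produce it (the paper exhibits kernels of the Mackey-functor Galois symbol in the unramified and supersingular-parameter cases, \autoref{kernelord} and \autoref{kernelss}). The paper's actual proof does not go through injectivity of the cycle map: it identifies $T(X)$ with a sum of Somekawa $K$-groups, and for each pair of factors shows that $K(K;E_1,E_2)/p$ is generated by symbols with one entry a torsion point of bounded order — using the Mackey decomposition $E/p\simeq\overline{U}^{i}\oplus\overline{U}^{j}$, the Serre--Tate parameter, and, when the criteria of \autoref{ordord}, \autoref{ordss}, \autoref{Tate1} fail, a tower of finite extensions followed by a norm/projection-formula descent (\autoref{ordsszeta}); by \autoref{torsionpointscriterion} and \autoref{vanishing} this yields that $p^NK(K;E_1,E_2)$ is $p$-divisible while the quotients $K/p^n$ are finite, whence the divisible-plus-finite structure; for three or more factors the $K$-group is $p$-divisible outright (\autoref{morecurves}). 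To repair your proposal you would have to replace the cohomological vanishing by an argument bounding the image of the $K$-groups (equivalently of $T(X)/p^n$) inside these nonvanishing $H^2$'s, which is exactly the symbol-theoretic work your sketch omits.
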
 
\subsection{The cycle map}

An essential tool for the study of zero-cycles on varieties defined over arithmetic fields is the study of the cycle map to \'{e}tale cohomology, 
\[CH_0(X)/n\xrightarrow{c_n} H_{\text{\'{e}t}}^{2d}(X,\mu_{n}^{\otimes d}),\] where $d=\dim(X)$. The map $c_n$ is in general neither injective nor surjective. 
When $X$ is a smooth surface over a $p$-adic field  and $n$ is coprime to $p$, Esnault and Wittenberg (\cite{Esnault/Wittenberg}) give some far reaching computations of the kernel. However, when $n$ is a power of $p$, still very little is known. 

Our primary goal in this paper is to describe as much as possible 
the kernel of the cycle map $c_{p^n}$ in the context of \autoref{maintheoremintro} and for every $n\geq 1$. Unlike \autoref{maintheoremintro} that was already known when all the curves have good ordinary or split multiplicative reduction, the injectivity of $c_{p^n}$ was previously known only in very limited cases. Under the assumptions of \autoref{maintheoremintro}, it has been established by Raskind and Spiess (\cite{Raskind/Spiess2000}) and Hiranouchi (\cite{Hiranouchi2014}) that the map $c_{p^n}$ is injective under the additional assumption  that $E_i[p^n]\subset E_i(K)$ for every $i\in\{1,\cdots,d\}$. The only result independent of $n\geq 1$ is due to Yamazaki (\cite{Yamazaki2005}), who  proved injectivity of $c_n$ for every $n\geq 1$, for a product $X=C_1\times\cdots\times C_d$ of Mumford curves, that is, higher genus analogues of Tate curves.

In this article we focus on removing the strong $K$-rationality assumption, $E_i[p^n]\subset E_i(K)$, and  pass to the limit for $p^n$. For a product $X=E_1\times E_2$ of two elliptic curves not both having supersingular reduction,  we give sufficient criteria for the injectivity of $c_{p^n}$, for every $n\geq 1$.  These criteria depend heavily on the reduction type of $E_1, E_2$ (\autoref{ordord}, \autoref{ordss}, \autoref{Tate1}) and when they are satisfied, they give us very sharp results. Namely,  \autoref{maintheoremintro}  gives us a decomposition, $T(X)\simeq D\oplus F$, where $D$ is a divisible group and $F$ a finite group. Our method often allows us to fully compute the finite group $F$, which to our knowledge is the first result in this direction. 
\begin{exmp}\label{exampleintro} Let $X=E\times E$ be the self product of an elliptic curve over $K$ with good ordinary reduction. Under some mild assumptions, the cycle map $c_{p^n}$ is injective for every $n\geq 1$ and we have an isomorphism, $T(X)\simeq D\oplus\Z/p^n$, if $E[p^n]\subset E(K)$ for some $n\geq 1$ and $n$ is the largest with this property. If $n=0$, the Albanese kernel $T(X)$ is divisible. 
\end{exmp}

When the criteria for injectivity are not satisfied, we show that an obstruction to injectivity is very possible to exist (\autoref{kernelord}, \autoref{kernelss}). However, when all the curves have good ordinary reduction, we show that the obstruction goes away after extending to a tower of finite extensions of $K$. Namely, we prove the following theorem. 

\begin{theo}\label{mainintro2} Let $E_1,\cdots,E_d$ be elliptic curves over $K$ with good ordinary reduction. Let $X=E_1\times\cdots\times E_d$. Then there exists a finite extension $L$ of $K$ such that the cycle map \[CH_0(X\times_K L)/p^n\xrightarrow{c_{p^n}} H_{\text{\'{e}t}}^{2d}(X\otimes_K L,\mu_{p^n}^{\otimes d}),\] is injective for every $n\geq 1$. 
\end{theo}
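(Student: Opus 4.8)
The strategy is to reduce the injectivity of the cycle map for $X=E_1\times\cdots\times E_d$ to the injectivity of a finite family of Galois symbol maps attached to pairs of the curves, and then to exhibit a single finite extension $L/K$ over which the sufficient criteria of \autoref{ordord} hold for every pair and every $n\ge 1$ simultaneously.

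First I would unwind the structure of $c_{p^n}$ exactly as in the proof of \autoref{maintheoremintro}. Combining the K\"unneth formula for $H^{2d}_{\text{\'{e}t}}(\overline X,\mu_{p^n}^{\otimes d})$, the Hochschild--Serre spectral sequence (which over the $p$-adic field $K$ has only the three columns $i=0,1,2$, since $\mathrm{cd}_p(K)=2$), and the Raskind--Spiess filtration on $CH_0(X)$, the map $c_{p^n}$ decomposes into a ``degree plus Albanese'' part, whose injectivity is classical and stable under base change, and its restriction to $T(X)/p^n$. The latter factors through a direct sum, over pairs $1\le i<j\le d$, of Galois symbol maps
\[
s_{ij,n}\colon K(K;E_i,E_j)/p^n\longrightarrow H^2\!\bigl(K,\;T_pE_i\otimes T_pE_j(-1)\bigr)/p^n .
\]
By \autoref{maintheoremintro} the source is finite, and \autoref{kernelord} describes $\ker(s_{ij,n})$ through the connected--\'etale filtration of $E_i[p^n]\otimes E_j[p^n]$: for good ordinary reduction each $T_pE_i$ is an extension of an unramified rank-one module, on which Frobenius acts by the unit root $\alpha_i$, by $\mathbb{Z}_p(1)$, so the obstruction is carried by the ``mixed'' graded pieces $\mathbb{Z}_p^{\mathrm{ur},\alpha_i}$, $\mathbb{Z}_p^{\mathrm{ur},\alpha_j}$, $\mathbb{Z}_p^{\mathrm{ur},\alpha_i\alpha_j}(-1)$ together with the position of the images of the local Kummer maps $E_i(K)/p^n\hookrightarrow H^1(K,E_i[p^n])$ relative to that filtration.

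Then I would choose $L$. All of the above is controlled by a bounded amount of arithmetic: there is an integer $N$, depending only on $K$ and $E_1,\dots,E_d$, such that for $L=K\bigl(E_1[p^{N}],\dots,E_d[p^{N}]\bigr)$ (a finite extension, which in particular contains $\mu_{p^{N}}$ and makes each $\widetilde E_i[p^{N}]$ rational over the residue field $k_L$) one has the following. Modulo $p$ the connected--\'etale sequence of each $E_i[p^n]$ is, over $L$, a split sequence of trivial $G_L$-modules; the Kummer maps on the formal part $\widehat E_i(\mathfrak m_L)/p$ and on lifts of $\widetilde E_i(k_L)/p$ surject onto the two graded pieces of $H^1(L,E_i[p])$; and the ``bad'' invariants $H^0\bigl(L,\mu_p^{\otimes s}\otimes(\text{unramified twist})\bigr)$ with $s\neq 0$ vanish. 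Over such an $L$ a Tate-duality count shows that cup products of Kummer classes exhaust the relevant part of $H^2(L,-)/p$, so each $s_{ij,1}$ is injective and the criteria of \autoref{ordord} are satisfied. Finally one upgrades from $n=1$ to all $n\ge 1$ by d\'evissage, working with the inverse system $\{K(L;E_i,E_j)/p^n\}_n$ and comparing it with $H^2\bigl(L,T_pE_i\otimes T_pE_j(-1)\bigr)$: the transition maps and connecting homomorphisms are governed by the mod-$p$ data we have trivialized, so the five-lemma propagates injectivity uniformly in $n$. Reassembling the pairs and the K\"unneth and Hochschild--Serre pieces yields injectivity of $c_{p^n}$ over $L$ for every $n\ge 1$.

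The hard part is precisely the passage from a single level to all $n$. Over $K$ the obstruction groups of \autoref{kernelord} can be genuinely nonzero and, through the factors $E_i(K)/p^n$, can grow with $n$, so killing the obstruction for one value of $n$ (as in the work of Hiranouchi, where $E_i[p^n]\subset E_i(K)$) does not suffice; one must show that a single, explicitly bounded enlargement of $K$ makes the connected--\'etale filtrations ``rationally split modulo $p$'' in a way that survives the d\'evissage, controlling in particular the relevant $H^0$ cross-terms at every level by their mod-$p$ values. The second point requiring care is that the $d$-fold statement really does collapse to the pairwise symbols after base change: one must verify that the Raskind--Spiess filtration on $CH_0(X_L)$ remains strict over $L$ and that the mixed K\"unneth components of $H^{2d}_{\text{\'{e}t}}(X_L,\mu_{p^n}^{\otimes d})$ contribute no new obstruction.
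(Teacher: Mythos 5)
Your overall architecture (reduce $c_{p^n}$ to Galois symbols via the Raskind--Spiess filtration and then base-change to kill the obstruction) matches the paper, but two of your steps do not hold up as written. First, the reduction is not only to \emph{pairs}: the filtration of $T(X)$ for $d\geq 3$ has graded pieces $K(K;E_{i_1},\dots,E_{i_r})$ with $r\geq 3$ as well, and Yamazaki's criterion requires injectivity of the Galois symbol for every such subproduct. The paper disposes of these by proving that the higher $K$-groups are $p$-divisible (\autoref{morecurves}); your proposal neither invokes nor replaces this, and merely flagging ``no new obstruction from the mixed K\"unneth components'' at the end does not cover it.

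Second, and more seriously, the passage from one level to all $n$ --- which you correctly identify as the hard part --- is not achieved by your d\'evissage. Choosing $L=K(E_1[p^N],\dots,E_d[p^N])$ gives injectivity of $s_{p^n}$ only for $n\leq N$ (\autoref{Hiranouchiinjec}); for $n>N$ the sequence $K(L;E_i,E_j)/p\to K(L;E_i,E_j)/p^{m+1}\to K(L;E_i,E_j)/p^m\to 0$ is only right exact, so a five-lemma argument ``governed by mod-$p$ data'' does not propagate injectivity, and this failure is exactly why the paper proves instead that $p^NK(L;E_i,E_j)$ is $p$-divisible. That divisibility comes from \autoref{torsionpointscriterion}: one must show $K(L;E_i,E_j)/p$ is generated by symbols having a $p^N$-torsion entry, and the mechanism securing this is \emph{wild ramification} of the field of the next layer of torsion (\autoref{ordord}: a torsion point whose unit coordinate lies in $\overline{U}^i\setminus\overline{U}^{i+1}$ with $i$ coprime to $p$, paired against a non-norm unit via Serre's local class field theory corollary). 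Your conditions on $L$ (splitting of the connected--\'etale sequences mod $p$, surjectivity of Kummer maps onto graded pieces, vanishing of twisted $H^0$'s) are neither shown to be attainable by a finite extension nor shown to imply the generation-by-torsion criterion; note that over-splitting is not obviously helpful, since \autoref{kernelord} shows the Mackey symbol acquires a kernel precisely when the torsion field is unramified. The paper's existence proof for $L$ is the tower argument of \autoref{tower1}: keep adjoining $p$-power torsion; since $K$ contains only finitely many $p$-power roots of unity, some layer $L_{r+1}/L_r$ must be wildly ramified, and \autoref{ordord} then gives injectivity of $s_{p^m}$ for all $m$ over $L_r$. Without an argument of this kind (or a genuine substitute for the divisibility of $p^NK$), your proof has a gap at its central step.
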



\subsection{A corollary over global fields}
One special case when we get sharp results is when the elliptic curves have complex multiplication by an imaginary quadratic field. In this case we get the following global-to-local corollary for a product $X$ of elliptic curves defined over an algebraic number field. 
\begin{cor} Let $X=E\times E$ be the self-product of an elliptic curve over an algebraic number field $K$.  Assume that $E$ has complex multiplication by an imaginary quadratic field $M$. Let $X_v=X\otimes_K K_v$ be the base change to a completion of $K$ at a finite place $v$. Then the Albanese kernel,  $T(X_v)$, is divisible for almost all ordinary reduction places $v$ of $K$. 
\end{cor}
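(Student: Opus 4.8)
The plan is to apply the local structure theorem for the Albanese kernel in the good ordinary case, \autoref{exampleintro}, together with the injectivity criterion \autoref{ordord} underlying it, place by place, and then to observe that both the hypotheses of that criterion and the nontriviality of the finite summand fail only at finitely many ordinary places.

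Fix a finite place $v$ of $K$ at which $E$ has good ordinary reduction, let $p$ be the rational prime below $v$, and write $X_v=X\otimes_K K_v$. Since $E$ has complex multiplication by $M$, good ordinary reduction at $v$ forces $p$ to split in $M$, and for all but finitely many such $v$ we may assume in addition that $p$ is unramified in $MK$, prime to the conductor of $E$ and to the conductor of the CM order, and large enough that $\operatorname{End}_{K_v}(E)$ is exactly the CM order. Under these hypotheses $M\otimes_{\mathbb Q}\mathbb Q_p\cong\mathbb Q_p\times\mathbb Q_p$, and the resulting idempotents split the $p$-divisible group of $E$ over $K_v$ into a product of two height-one Lubin--Tate formal groups; this is exactly the situation in which the mild assumptions of \autoref{exampleintro} (equivalently, the hypotheses of \autoref{ordord}) hold. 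Hence, for all but finitely many ordinary places $v$, \autoref{exampleintro} applied over $K_v$ yields a decomposition
\[
T(X_v)\ \simeq\ D_v\ \oplus\ \mathbb Z/p^{\,n_v},
\]
with $D_v$ divisible and $n_v$ the largest integer for which $E[p^{\,n_v}]\subseteq E(K_v)$.

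It remains to see that $n_v=0$ for all but finitely many $v$; this step does not use the CM hypothesis. The Weil pairing $E[p]\times E[p]\to\mu_p$ is perfect and Galois-equivariant, so $E[p]\subseteq E(K_v)$ would imply $\mu_p\subseteq K_v$. But $[K_v:\mathbb Q_p]\le[K:\mathbb Q]$ while $[\mathbb Q_p(\mu_p):\mathbb Q_p]=p-1$, so this is impossible as soon as $p-1>[K:\mathbb Q]$. Therefore $n_v=0$, and $T(X_v)\simeq D_v$ is divisible, for every ordinary reduction place $v$ lying above a rational prime $p>[K:\mathbb Q]+1$, i.e. for all but finitely many ordinary reduction places of $K$.

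The main obstacle is the middle step: pinning down the precise list of mild assumptions behind \autoref{exampleintro} and \autoref{ordord} and checking that each of them excludes only finitely many places $v$. The CM hypothesis is what makes this tractable, because at a split ordinary prime the decomposition $M\otimes_{\mathbb Q}\mathbb Q_p\cong\mathbb Q_p\times\mathbb Q_p$ supplies the idempotents needed to put the local Mordell--Weil group and the relevant Somekawa $K$-group into the form required by the injectivity criterion; away from the CM case one has no uniform control over these hypotheses as $v$ varies. The torsion-vanishing step, by contrast, is soft and follows at once from the Weil pairing.
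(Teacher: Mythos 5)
There is a genuine gap, and it sits exactly where you flagged the ``main obstacle'': the claim that the CM splitting at a split ordinary prime puts you in the situation where the hypotheses of \autoref{ordord} (the ``mild assumptions'' of \autoref{exampleintro}) hold. In fact the opposite happens. At an ordinary place $v$ with $K_v/\Q_p$ unramified and $p$ split in $M$, the CM structure forces the connected-\'{e}tale sequence $0\to\mu_p\to E[p]\to\Z/p\to 0$ to split over $L_0(\mu_p)$ with $L_0/K_v$ unramified (trivial Serre--Tate parameter, by the reference to Serre cited in the paper), so $K_v(E[p])/K_v$ is at most tamely ramified. Hence the key hypothesis of \autoref{ordord} --- wild ramification of $K_v(E[p^{n+1}])/K_v$, equivalently a Serre--Tate parameter $u$ with $K_v(\sqrt[p]{u})/K_v$ ramified in the $n=0$ case of \autoref{structure1} --- fails at almost all such $v$, and you cannot invoke \autoref{exampleintro} to get $T(X_v)\simeq D_v\oplus\Z/p^{n_v}$. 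Worse, \autoref{kernelord} shows that in this unramified/tame situation the Mackey-functor symbol has a nontrivial kernel (the Mackey product is $\Z/p$ while $s_p=0$), and after base change to $L=K_v(\mu_p)$ the paper shows $T(X_L)\simeq\Z/p\oplus T(X_L)_{\dv}$ is \emph{not} divisible; so divisibility over $K_v$ is genuinely not a formal consequence of the earlier injectivity criteria. Your final step ($n_v=0$ for large $p$ via the Weil pairing) is correct but does not repair this, since the case $n_v=0$ with trivial Serre--Tate parameter is precisely the case those criteria leave open.

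The paper closes this gap with a separate argument, \autoref{CM1}, which is what the corollary actually rests on (together with the observation that $K_v/\Q_p$ is unramified for almost all $v$). There one extends to $L=K_v(E[p])=K_v(\mu_p)$, where $K(L;E_L,E_L)/p\simeq\Z/p$ is generated by a symbol $\{w,\res_{L/K_v}(b')\}_{L/L}$ with $w\in\hat{E}[p]$; the unramifiedness of $K_v/\Q_p$ pins the ramification jump of $L(\frac1p w)/L$ at $p-1$ and lets one choose the second entry as a restriction from $K_v$. The projection formula then gives $N_{L/K_v}(\{w,\res_{L/K_v}(b')\}_{L/L})=\{N_{L/K_v}(w),b'\}_{K_v/K_v}=0$ because $N_{L/K_v}(w)$ is a torsion element of the torsion-free formal group $\hat{E}(\mathcal{O}_{K_v})$; since the norm is also surjective, $K(K_v;E,E)/p=0$ and $T(X_v)$ is divisible. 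Your proposal would need this (or an equivalent) norm-vanishing argument; as written, the appeal to \autoref{ordord} via CM idempotents cannot work because its wild-ramification hypothesis is systematically violated at the places in question.
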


\subsection{Outline of our Method} In this paper we use a method introduced by Raskind and Spiess in \cite{Raskind/Spiess2000} and continued by more authors (\cite{Yamazaki2005}, \cite{Murre/Ramakrishnan2009}, \cite{hiranouchi/hirayama}, \cite{Hiranouchi2014}).
\subsection*{\texorpdfstring{Relation to the Somekawa $K$-group}{Relation to the Somekawa K-group}}  Raskind and Spiess reduced the study of the Albanese kernel $T(X)$ on a  product of curves to the study of the Somekawa $K$-group $K(K;A_1,\cdots,A_r)$ attached to abelian varieties $A_1,\cdots,A_r$ over $K$. 
This group is a generalization of the Milnor $K$-group, $K_r^M(K)$ of the field $K$. It is a quotient of the group 
\[\bigoplus_{L/K\text{ finite}}A_1(L)\otimes \cdots\otimes A_r(L)\]
first by a relation similar to the \textit{projection formula} of $CH_i(X)$ and then by a second relation coming from function fields of curves, known as \textit{Weil reciprocity}. 

The big advantage of this method is that  the group $K(K;A_1,\cdots,A_r)$ has specific gene-rators and relations. More importantly, when working over a $p$-adic field $K$, the projection formula is easy to use and in most cases it gives already enough relations that guarantee  that the quotients $T(X)/p^n$ are small. 

\subsection*{The Galois symbol} Similarly to the case of the Milnor $K$-groups, for an integer $n\geq 1$ invertible in $K$, there is a map to Galois cohomology, known as \textit{the generalized Galois symbol},
\[K(K;A_1,\cdots, A_r)/n\xrightarrow{s_{n}} H^r(K,A_1[n]\otimes\cdots\otimes A_r[n]).\] This map is constructed similarly to the Galois symbol of the Bloch-Kato conjecture, and it is conjectured by Somekawa (\cite{Somekawa1990}) to always be injective. Nonetheless, a counterexample has been found in \cite{Spiess/Yamazaki2009}, not for abelian coordinates but for the group $K(K;T,T)$ attached to two copies of a certain non-split torus over a two-dimensional local field $K$. 

Coming to the question of injectivity of the cycle map $c_{p^n}$ for products of curves over $p$-adic fields all having a $K$-rational point, this question has been reduced by Yamazaki to verifying the Somekawa conjecture for abelian  varieties. 

For elliptic curves $E_1,\cdots,E_r$ over a $p$-adic field $K$ satisfying the assumptions of \autoref{mainintro2}, the  conjecture has been established (\cite{Raskind/Spiess2000}, \cite{Hiranouchi2014}) under the  assumption that $E_i[p^n]\subset E_i(K)$, for $i=1,\cdots,r$. This is the assumption we would like to remove.  When $E_i[p]\subset E_i(K)$ for every $i=1,\cdots,d$, we introduce a new method to pass to the limit for $p^n$.  Roughly speaking, our method is based on the following principle. ``When $E_i[p]\subset E_i(K)$, the $K$-group $K(K;E_1,E_2)/p$ is generated by $p^N$ torsion points for sufficiently large $N\geq 1$''. When this is not achieved over $K$, we construct a tower, $K\subset L_1\subset\cdots\subset L_r$ of finite extensions so that this condition is achieved in the tower. 
When the curve $E_i$ has either good ordinary or split multiplicative reduction, we even manage to remove the assumption $E_i[p]\subset E_i(K)$ by using the theory of $p$-adic uniformization of elliptic curves.

We note that in all our computations we use a group larger than $K(K;E_1,\cdots,E_r)$. Namely, in the definition of the Somekawa $K$-group we forget the relations coming from function fields of curves.  For this larger group, we show that most of our conditions become necessary for injectivity. This, however, does not disprove the Somekawa conjecture.

\subsection*{Some Corollaries}   As a byproduct of our proofs, we obtain some important corollaries. First, in the context of \autoref{maintheoremintro}, we get a decomposition $T(X)\simeq F\oplus D$, with the finite group $F$  generated by $K$-rational points. We hope that this corollary could have potential applications over global fields. 

Moreover, using computations of Yamazaki (\cite{Yamazaki2005}), we obtain a corollary about the Brauer-Manin pairing, $CH_0(X)\times Br(X)\rightarrow\Q/\Z$, where by $Br(X)$ we denote the Brauer group of $X$ (\autoref{Brauer-Manin}).

We wish our methods could be used to establish \autoref{maintheoremintro} for any product of elliptic curves. Unfortunately, there is a very serious obstruction for a product $E_1\times E_2$ of two curves with supersingular reduction. Namely, in this case the easy projection formula of the Albanese kernel does not seem to give us enough relations that guarantee the quotient $T(E_1\times E_2)/p$ is finite. 
\subsection*{Notation} Unless otherwise specified, all cohomology groups considered in this paper will be over the \'{e}tale site. In particular, for a field $K$ we will denote by $H^i(K,-)$ the Galois cohomology groups of $K$. Moreover, we will denote the separable closure of a field $F$ by $\overline{F}$. 

If $L/K$ is an extension of fields and $X$ is a variety over $K$, we will denote the base change $X\otimes_K L$ by $X_L$. 



\vspace{2pt}
\subsection{Acknowledgements}
	We would like to express our sincere gratitude to Professors Bhargav Bhatt, Spencer Bloch, Jean-Louis Colliot-Th\'{e}l\`{e}ne,  Toshiro Hiranouchi, Kazuya Kato, Shuji Saito and 
	Takao Yamazaki for their interest in our work and their helpful comments and suggestions. We are particularly grateful to  T. Yamazaki for pointing out a mistake in an earlier version of this paper.  Finally, we would like to thank our referees for pointing out inaccuracies and providing many useful suggestions that helped improve significantly the paper. 
The first author is partially supported by the NSF grant DMS-2001605.

\bigskip
\section{\texorpdfstring{Mackey Functors and Somekawa $K$-groups}{Mackey Functors and Somekawa K-groups}} In this section we review the definition of the Somekawa  $K$-group $K(K;A_1,\cdots,A_r)$ for abelian varieties $A_1,\cdots,A_r$ over  a perfect field $K$. We start by reviewing the definition of a Mackey functor. 

Let $K$ be a perfect field. A Mackey functor $\mathcal{F}$ over $K$ is a presheaf on the category of \'{e}tale $K$-schemes having the following additional property. For every finite morphism $X\stackrel{f}{\longrightarrow} Y$ of \'{e}tale $K$-schemes, in addition to the restriction map $\mathcal{F}(Y)\stackrel{f^\star}{\longrightarrow}\mathcal{F}(X)$, there is also a push-forward map, $\mathcal{F}(X)\stackrel{f_\star}{\longrightarrow} \mathcal{F}(Y)$. The maps $f^\star$ and $f_\star$ satisfy certain functoriality conditions, for example for a composition $X\xrightarrow{f}Y\xrightarrow{g}Z$, we have an equality, $(f\circ g)_\star=f_\star\circ g_\star$. Moreover, there is a decomposition $\mathcal{F}(X_1\sqcup X_2)=\mathcal{F}(X_1)\oplus \mathcal{F}(X_2)$. Therefore, $\mathcal{F}$ is fully determined by its value  $\mathcal{F}(L):=\mathcal{F}(\Spec L)$ at every finite extension $L$ over $K$.  
 For a more detailed discussion on the properties of Mackey functors we refer to \cite[p. 13, 14]{Raskind/Spiess2000}. 
 
\subsection*{Notation}From now, if $K\stackrel{f}{\hookrightarrow}L$ is a finite extension of perfect fields, we will denote the restriction map by $\res_{L/K}:\mathcal{F}(K)\rightarrow\mathcal{F}(L)$ and the push-forward map by $N_{L/K}:\mathcal{F}(L)\rightarrow\mathcal{F}(K)$ and call it \textit{the norm}.

 \begin{exmp} Let $A$ be an abelian variety over $K$. Then $A$ induces a Mackey functor  by assigning to a finite extension $L/K$, $A(L):=\Hom(\Spec L, A)$. For a finite extension $F/L$,  the push-forward  is the norm map on abelian varieties, $N_{F/L}:A(F)\rightarrow A(L)$. 
\end{exmp}
Kahn proved in \cite{Kahn1992} that  the category $MF_K$ of Mackey functors on $(\Spec K)_{\text{\'{e}t}}$ is an \textit{abelian category with a tensor product $\otimes^M$}. For abelian varieties $A_1,\cdots, A_r$ over $K$, we review the definition of $A_1\otimes^M\cdots\otimes^M A_r$ below. The definition is in fact very similar for general Mackey functors $\mathcal{F}_1,\cdots,\mathcal{F}_r$, but here we only need the abelian variety case. 
\begin{defn} Let $A_1,\cdots, A_r$ be abelian varieties over a perfect field $K$. The Mackey product ${A_1\otimes^M\cdots\otimes^M A_r}$ is defined at a finite extension $L$ over $K$ as follows: 
\[(A_1\otimes^M\cdots\otimes^M A_r)(L):=\left(\bigoplus_{F/L\text{ finite}} A_1(F)\otimes\cdots\otimes A_r(F)\right)/R_1.\] Here $R_1$ is the subgroup generated by elements of the form 
\[\label{projectionformula} a_1\otimes\cdots\otimes N_{F'/F}(a_i)\otimes\cdots\otimes a_r-\res_{F'/F}(a_1)\otimes\cdots\otimes a_i\otimes\cdots\otimes \res_{F'/F}(a_r), \] 
where $F'\supset F\supset L$ is a tower of finite extensions of $K$, $a_i\in A_i(F')$ for some $i\in\{1,\cdots,r\}$, and $a_j\in A_j(F)$ for every $j\neq i$.  
\end{defn} 
\begin{notn} From now on we will be  using the standard symbol notation for the generators of $(A_1\otimes^M\cdots\otimes^M A_r)(L)$, namely $\{a_1,\cdots,a_r\}_{F/L}$ for $a_i\in A_i(F)$. 
\end{notn}

\subsection*{Norm and Restriction} Since $A_1\otimes^M\cdots\otimes^M A_r$ is a Mackey functor, there are norm and restriction maps corresponding to any finite extension $L/K$. Namely, 
\[\res_{L/K}:(A_1\otimes^M\cdots\otimes^M A_r)(K)\rightarrow (A_1\otimes^M\cdots\otimes^M A_r)(L),\]
and \[N_{L/K}:(A_1\otimes^M\cdots\otimes^M A_r)(L)\rightarrow (A_1\otimes^M\cdots\otimes^M A_r)(K).\] Moreover, we have the relation $N_{L/K}\circ\res_{L/K}=[L:K]$. 
\vspace{1pt}
\begin{rem}\label{normnotations} We note that the symbol $\{a_1,\cdots,a_r\}_{F/L}\in (A_1\otimes^M\cdots\otimes^M A_r)(L)$ is nothing but $N_{F/L}(\{a_1,\cdots,a_r\}_{F/F})$.  The defining relation $R_1$ is classically referred to as \textit{projection formula}. We rewrite it using the symbolic notation:
\begin{equation}\{a_1,\cdots, N_{F/L}(a_i),\cdots,a_r\}_{L/L}=N_{F/L}(\{\res_{F/L}(a_1),\cdots,a_i,\cdots,\res_{F/L}(a_r)\}_{F/F}). 
\end{equation}
\end{rem}
\vspace{1pt}
\subsection{\texorpdfstring{The Somekawa $K$-group}{The Somekawa K-group}} We are now ready to review the definition of the  $K$-group $K(K;A_1,\cdots,A_r)$ attached to abelian varieties over a perfect field $K$. 
\begin{defn} The Somekawa $K$-group $K(K;A_1,\cdots,A_r)$ is defined as 
\[K(K;A_1,\cdots,A_r)=(A_1\otimes^M\cdots\otimes^M A_r)(K)/R_2,\] where the subgroup $R_2$ is generated by the following family of elements. Let $C$ be a smooth complete curve over $K$ endowed with morphisms $g_i:C\rightarrow A_i$ for $i=1,\cdots,r$. Then for every function  $f\in k(C)^\times$ we require 
\[\sum_{x\in C}\ord_x(f)\{g_1(x),\cdots,g_r(x)\}_{\kappa(x)/K}\in R_2.\]
\end{defn} 
The above definition was given by Somekawa (\cite{Somekawa1990}), following a suggestion of K. Kato. Somekawa defined  more generally a $K$-group $K(K;G_1,\cdots, G_r)$ attached to semi-abelian varieties over a field $K$, that in the special case when $G_i=\G_m$ for every $i$, it turns out to be isomorphic to the Milnor $K$-group, $K_r^M(K)$. Recently this definition has been generalized to include more general coordinates. We refer to \cite{Ivorra/Ruelling2017} and \cite{KahnYamazaki2013} for more details. 
\begin{rem}
In most of this paper we will be using the Mackey product $(A_1\otimes^M\cdots\otimes^M A_r)(K)$ for our calculations. 
From now on we will use the same symbolic notation, $\{a_1,\cdots,a_r\}_{L/K}$,  for the generators of both the Mackey product and the Somekawa $K$-group. To avoid confusion, we will always clarify which group we are using. 
\end{rem}
\subsection*{Galois Symbol} Let $A_1,\cdots,A_r$ be abelian varieties over a perfect field $K$ and $n$ be an integer invertible in $K$. The Kummer maps, $A_i(L)/n\hookrightarrow H^1(L,A_i[n])$, together with the cup product and the norm map of Galois cohomology (i.e. the corestriction map) induce a generalized Galois symbol, 
\[s_n:K(K;A_1,\cdots,A_r)/n\rightarrow H^r(K,A_1[n]\otimes\cdots\otimes A_r[n]).\]
\begin{conj}\label{conjecture}(\cite{Somekawa1990}) The generalized Galois symbol $s_n$ is always injective. 
\end{conj}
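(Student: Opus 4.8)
The plan is to attack \autoref{conjecture} by realizing both sides of $s_n$ as avatars of a single motivic object and reducing injectivity to a comparison theorem of Beilinson--Lichtenbaum type. First I would invoke the isomorphism, due to Kahn--Yamazaki and Ivorra--R\"ulling, identifying the Somekawa group $K(K;A_1,\cdots,A_r)$ with a group of morphisms $\Hom(\Z,M_1\otimes\cdots\otimes M_r[r])$ in a suitable triangulated category of motivic (reciprocity) sheaves, up to the appropriate twist and shift, where $M_i=h_1(A_i)$ is the degree-one motive of $A_i$. Under this dictionary the generalized Galois symbol $s_n$ becomes reduction modulo $n$ followed by the \'etale realization, so that \autoref{conjecture} asserts the injectivity of a cycle-to-cohomology comparison map. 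The model case is $A_i=\G_m$ for all $i$, where $K(K;\G_m,\cdots,\G_m)\cong K_r^M(K)$ and $s_n$ is the norm residue map, whose injectivity is exactly the (now proven) Bloch--Kato conjecture. The goal is to propagate this injectivity from Tate coordinates to arbitrary abelian coordinates.

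The concrete steps would be as follows. (i) Establish the base case $r=1$: here $K(K;A_1)=A_1(K)$ and $s_n$ is the Kummer map $A_1(K)/n\hookrightarrow H^1(K,A_1[n])$, injective by descent for the multiplication-by-$n$ isogeny. (ii) For general $r$, factor $s_n$ through the iterated cup product
\[
A_1(L)/n\otimes\cdots\otimes A_r(L)/n\longrightarrow H^r(L,A_1[n]\otimes\cdots\otimes A_r[n]),
\]
compatibly with the norm maps $N_{L/K}$, so that the defining relations $R_1$ (projection formula) and $R_2$ (Weil reciprocity) are matched by the projection formula and reciprocity for corestriction and cup product in Galois cohomology. (iii) Run a d\'evissage on the weight filtration of the Galois modules $A_i[n]$, peeling off Tate-type subquotients (handled by Bloch--Kato) and reducing the remaining injectivity to the genuinely weight-one part of $A_1[n]\otimes\cdots\otimes A_r[n]$. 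The hoped-for outcome is that any symbol in $\ker(s_n)$ is forced, after a controlled finite extension and an application of $R_1$, into a product in which some factor already vanishes in $H^1$, hence is zero by step (i).

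The hard part --- and the reason \autoref{conjecture} remains open in the stated generality --- is step (iii). There is no analogue of the Voevodsky--Rost norm-variety machinery for abelian coefficients, and the motive $M_i=h_1(A_i)$ does not decompose into Tate twists outside very special (e.g.\ complex multiplication) situations, so one cannot simply bootstrap from the Milnor case. Worse, the weight-zero regime is genuinely dangerous: the counterexample of Spiess--Yamazaki for $K(K;T,T)$ with $T$ a non-split torus over a two-dimensional local field shows that the comparison map can fail to be injective once such coordinates appear. Any proof for abelian varieties must therefore exploit the strict positivity of the Hodge--Tate weights of $A_i[n]$ to exclude this pathology, and it is precisely this uniform control over $\ker(s_n)$ that is missing. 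In the absence of such a general argument, the realistic route --- and the one pursued in this paper --- is to verify the required injectivity field by field: over a $p$-adic base $K$ one replaces the abstract comparison by local duality and explicit Kummer theory, proving that $K(K;A_1,\cdots,A_r)/p^n$ is generated, possibly after a finite extension, by symbols whose images in $H^r(K,-)$ can be detected directly.
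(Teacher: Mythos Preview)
The statement you are addressing is a \emph{conjecture}, and the paper does not prove it. Immediately after stating \autoref{conjecture}, the paper writes: ``This conjecture is the analogue of the Bloch--Kato conjecture for the Somekawa $K$-groups. It is still very open in general and as already mentioned in the introduction, a counterexample has been found (\cite{Spiess/Yamazaki2009}) for non-abelian coordinates.'' There is nothing to compare your proposal against.

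To your credit, you recognize this: your text is not a proof but a strategic outline, and you explicitly flag step~(iii) as ``the hard part --- and the reason \autoref{conjecture} remains open in the stated generality.'' That assessment is accurate. Steps~(i) and~(ii) are standard and correct (the $r=1$ case is the Kummer map; compatibility of the symbol with corestriction and cup product is built into Somekawa's construction), but they do not touch the kernel of $s_n$ for $r\geq 2$. The d\'evissage you sketch in~(iii) has no known implementation: the $G_K$-modules $A_i[n]$ do not in general admit a filtration by Tate twists, so there is no mechanism to reduce to the Milnor/Bloch--Kato case, and the Spie\ss--Yamazaki counterexample shows that the analogous statement for semi-abelian coordinates is simply false.

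Your final paragraph accurately describes what the paper actually does: it abandons the general conjecture and instead establishes injectivity of $s_{p^n}$ in specific $p$-adic situations (products of elliptic curves with controlled reduction types) by explicit computation with the Mackey product, local class field theory, and the decomposition of $E/p$ into unit-group subfunctors. That is the content of Sections~3.1--3.4, not a proof of \autoref{conjecture}.
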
 
This conjecture is the analogue of the Bloch-Kato conjecture for the Somekawa $K$-groups. It is still very open in general and as already mentioned in the introduction, a counterexample has been found (\cite{Spiess/Yamazaki2009}) for non-abelian coordinates. 

\subsection{Relation to zero-cycles} For a  product $X=E_1\times\cdots\times E_d$ \footnote{We note that the construction of Raskind and Spiess was a lot more general, for a product of smooth complete and geometrically connected curves all having a $K$-rational point, and the filtration was constructed using the Somekawa $K$-groups attached to their Jacobian varieties. }  of elliptic curves over a field $K$, Raskind and Spiess (\cite[Corollary 2.4.1]{Raskind/Spiess2000}) constructed a finite decreasing filtration $F^0\supset F^1\supset \cdots\supset F^N\supset 0$ of $CH_0(X)$ such that the successive quotients $F^i/F^{i+1}$ are isomorphic to Somekawa $K$-groups of the form $K(K;E_{i_1},\cdots,E_{i_r})$, for $1\leq i_1<\cdots<i_r\leq d$.  More precisely, they proved an isomorphism 
\[CH_0(X)\simeq\Z\oplus\bigoplus_{1\leq \nu\leq d}\bigoplus_{1\leq i_1<\cdots<i_\nu\leq d}K(K;E_{i_1},\ldots, E_{i_\nu})\]
 Additionally, the subgroups $F^1$ and $F^2$ coincide with $A_0(X)$ and $T(X)$ respectively (\cite[Remark 2.4.2 (b)]{Raskind/Spiess2000}, \cite[Example 2.2]{Yamazaki2005}). 

Yamazaki then showed (\cite[Proposition 2.4]{Yamazaki2005}) that in the above set-up, the injectivity of the cycle map \[CH_0(X)/n\xrightarrow{c_n} H^{2d}_{\text{\'{e}t}}(X,\mu_n^{\otimes d})\] can be reduced to verifying the Somekawa conjecture for all the Galois symbols
\[K(E_{i_1},\cdots,E_{i_r})/n\xrightarrow{s_n} H^{2r}(K;E_{i_1}[n]\otimes\cdots\otimes E_{i_r}[n]).\]

\subsection{\texorpdfstring{Injectivity in the $p$-adic case}{Injectivity in the p-adic case}}
 From now on we focus on the case of a $p$-adic field $K$. In fact we make the following convention. 
\begin{conv} From now on, unless specified otherwise, we assume that $K$ is a $p$-adic field with ring of integers $\mathcal{O}_K$, maximal ideal $\mathfrak{m}_K$ and residue field $k$.  Moreover, we assume that all the elliptic curves considered in this paper have \textit{split semistable reduction}. 
\end{conv} 
We give an overview of the status of \autoref{conjecture}. 
\subsection*{\texorpdfstring{When $n$ is coprime to $p$}{When n is coprime to p}} In this case the problem is easier to handle. When at least two of the abelian varieties $A_1,\cdots, A_r$ have good reduction, the injectivity of $s_n$ follows from the following stronger result. 
\begin{theo} (Raskind and Spiess, \cite[Theorem 3.5]{Raskind/Spiess2000}) If $n$ is coprime to $p$ and at least two of the abelian varieties $A_1,\cdots, A_r$ have good reduction, the group $K(K;A_1,\cdots,A_r)$ is $n$-divisible. In particular the Galois symbol vanishes, $s_n=0$.
\end{theo}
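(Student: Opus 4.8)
The plan is to reduce the vanishing of the Somekawa $K$-group after tensoring with $\Z/n$ to a statement about the structure of the groups $A_i(L)$ for finite extensions $L/K$, combined with the projection formula $R_1$. The key input will be the following standard fact about abelian varieties over $p$-adic fields: if $A$ has good reduction over a finite extension $L/K$ and $n$ is coprime to $p$, then the reduction map $A(L)\to \widetilde{A}(k_L)$ induces an isomorphism $A(L)/n \xrightarrow{\sim} \widetilde{A}(k_L)/n$, since the kernel of reduction is a pro-$p$ group. In particular $A(L)/n$ is a finite group whose size is controlled by $\#\widetilde{A}(k_L)$, and—crucially—after a further finite extension one can make $A(L)/n$ vanish entirely, because enlarging the residue field multiplies the number of points and one can arrange $n \mid \#\widetilde{A}(k_{L'})$ appropriately; more simply, every element of $\widetilde{A}(k_L)$ becomes $n$-divisible over a large enough extension since $\widetilde{A}(\overline{k})$ is $n$-divisible.

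Here are the steps I would carry out. First, by relabeling assume $A_1$ and $A_2$ have good reduction. Take an arbitrary generator $\{a_1,\dots,a_r\}_{F/K}$ of $K(K;A_1,\dots,A_r)$, where $F/K$ is finite and $a_i \in A_i(F)$. Second, after replacing $F$ by a finite extension $F'/F$ (over which $A_1$ still has good reduction, which costs nothing since good reduction is preserved under extension), use the fact above to write $a_1 = n b_1$ for some $b_1 \in A_1(F')$, i.e. $\res_{F'/F}(a_1)$ is divisible by $n$ in $A_1(F')$. Third, apply the projection formula $R_1$: writing $a_1 = N_{F'/F}(\text{something})$ is not quite what we want, so instead I would run the argument in the form
\[
\{a_1,\dots,a_r\}_{F/K} = \{N_{F'/F}(b_1'),a_2,\dots,a_r\}_{F/K}
\]
is the wrong shape; the clean way is: in $A_1\otimes^M\cdots\otimes^M A_r$, pull everything up to $F'$, so that the symbol equals (up to the norm $N_{F/K}$) $N_{F'/F}$ applied to $\{\res_{F'/F}(a_1),\res_{F'/F}(a_2),\dots\}_{F'/F'}$ only when $[F':F]$-divisibility is handled—so the real mechanism is that $\{a_1,\dots\}_{F/F}$ with $a_1 = nb_1$ equals $n\{b_1,a_2,\dots,a_r\}_{F'/F'}$ by multilinearity of the symbol in each coordinate. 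Hence every generator of $K(K;A_1,\dots,A_r)$ lies in $n \cdot K(K;A_1,\dots,A_r)$, which is exactly $n$-divisibility. Fourth, $n$-divisibility of the source forces the Galois symbol $s_n\colon K(K;A_1,\dots,A_r)/n \to H^r(\cdots)$ to be the zero map, since its domain is $K(K;A_1,\dots,A_r)/n = 0$.

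The main obstacle, and the place where the coprime-to-$p$ hypothesis is essential, is Step 2: one needs that multiplication by $n$ on $A_1(F')$ hits $\res_{F'/F}(a_1)$ after a suitable finite extension, and this uses that the kernel of reduction is a pro-$p$ group (so $n$-divisible when $n$ is prime to $p$) together with $n$-divisibility of $\widetilde{A_1}(\overline{k})$; if $n$ were a power of $p$ this fails, because the formal group contributes a genuine $\Z_p$-part and the reduction argument collapses. A secondary technical point is bookkeeping with the projection formula to make sure the divisibility of a single coordinate over a finite extension descends to divisibility of the symbol over $K$ — but this is exactly the content of $R_1$ and the multilinearity built into the symbol notation, so it is routine rather than deep.
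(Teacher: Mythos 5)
There is a genuine gap, and it sits exactly where you declared the issue ``routine'': the descent of divisibility from $F'$ back down to $F$. If $a_1$ only becomes $n$-divisible after a finite extension $F'/F$, say $\res_{F'/F}(a_1)=nb_1$ with $b_1\in A_1(F')$, then the identity you invoke, $\{a_1,\dots,a_r\}_{F/F}=n\{b_1,\res a_2,\dots,\res a_r\}_{F'/F'}$, is false: the left side lives over $F$ and the right side over $F'$, and the only relation the projection formula gives is
\[
N_{F'/F}\bigl(\{\res_{F'/F}(a_1),\res_{F'/F}(a_2),\dots\}_{F'/F'}\bigr)=\{N_{F'/F}\res_{F'/F}(a_1),a_2,\dots\}_{F/F}=[F':F]\,\{a_1,\dots,a_r\}_{F/F},
\]
so your argument only shows that $[F':F]\{a_1,\dots,a_r\}_{F/F}$ is divisible by $n$. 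This is vacuous precisely when $[F':F]$ shares factors with $n$, and the extension needed to divide a point by $n$ on the special fibre typically does have degree divisible by $n$ (dividing by $\ell$ on $\overline{A}_1(k_F)$ in general forces a residue extension of degree divisible by $\ell$), so you cannot arrange $\gcd([F':F],n)=1$. Note also that, as written, your proof never uses the hypothesis that \emph{two} of the $A_i$ have good reduction; only $A_1$ enters. Since the statement is false with just one good-reduction factor (e.g.\ for a good-reduction curve paired with a Tate curve the prime-to-$p$ quotients need not vanish), an argument that doesn't use the second factor cannot be complete.

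The missing mechanism is the second good-reduction coordinate. Choose $F'/F$ \emph{unramified}: since $n$ is prime to $p$ and $A_1$ has good reduction, the kernel of reduction is uniquely $n$-divisible, so $A_1(F)/n\simeq \overline{A}_1(k_F)/n$ and $\res_{F'/F}(a_1)$ becomes $n$-divisible over a suitable finite unramified $F'$. Then use that for $A_2$ with good reduction and $F'/F$ unramified the norm $N_{F'/F}\colon A_2(F')\to A_2(F)$ is surjective (\cite[Corollary 4.4]{Mazur1972}, via Lang's theorem), write $a_2=N_{F'/F}(a_2')$, and apply the projection formula in the \emph{second} coordinate:
\[
\{a_1,a_2,\dots,a_r\}_{F/F}=N_{F'/F}\bigl(\{\res_{F'/F}(a_1),a_2',\res_{F'/F}(a_3),\dots\}_{F'/F'}\bigr),
\]
with no spurious factor $[F':F]$. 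Now $\res_{F'/F}(a_1)=nb_1$ and multilinearity (correctly applied over $F'$) shows the symbol on the right is $n$ times a symbol over $F'$, and applying the homomorphism $N_{F'/F}$ (then $N_{F/K}$) puts the original generator in $n\,K(K;A_1,\dots,A_r)$. Your final step (divisibility $\Rightarrow$ $s_n=0$) is fine once this is in place.
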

\subsection*{\texorpdfstring{When $n$ is a power of $p$}{When n is a power of p}} Proving injectivity of $s_{p^n}$ for $n\geq 1$ over a $p$-adic field $K$ is a mixed characteristic problem of great difficulty. Raskind and Spiess  described a general method that could be used to establish injectivity, under the additional assumption that $\mu_{p^n}\subset K$ and $A_i[p^n]\subset A_i(K)$, for $i=1,\cdots, r$. We briefly review this method only for two elliptic curves $E_1,E_2$ over $K$, to keep the notation simple. 

The main idea is to relate the generalized Galois symbol $s_{p^n}$ to the classical Galois symbol of the Bloch-Kato conjecture, 
\[K_2^M(K)/p^n\stackrel{g_{p^n}}{\longrightarrow} H^2(K,\mu_{p^n}^{\otimes 2}).\] When $\mu_{p^n}\subset K$, the latter has a concrete description in terms of central simple algebras. Moreover, for a symbol $\{x,y\}$ the following equivalence is known. 
\begin{tcolorbox}\label{MilnorCriterion}
\begin{align}& g_{p^n}(\{x,y\})=0 \Leftrightarrow\; x\in N_{
K(\sqrt[p^n]{y})/K}(K(\sqrt[p^n]{y})^\times)
\Leftrightarrow \;y\in N_{
K(\sqrt[p^n]{x})/K}(K(\sqrt[p^n]{x})^\times).\nonumber
\end{align}
\end{tcolorbox}
The steps of the method are as follows:
\begin{itemize}
\item Because of the $K$-rationality assumption, we can fix an isomorphism $E_i[p^n]\simeq(\mu_{p^n})^{\oplus 2}$, for $i=1,2$. This in turn gives us isomorphisms
\[H^2(K,E_1[p^n]\otimes E_2[p^n])\simeq\bigoplus^4 H^2(K,\mu_{p^n})\simeq \bigoplus^4 Br(K)[p^n]\simeq\bigoplus^4 \Z/p^n.\]
\item The next step is to describe realizations of the Mackey functors $E_1/p^n$ and $E_2/p^n$ as subfunctors of $\G_m/p^n$. If such realizations exist and are compatible with the Kummer map, 
$E_i(L)/p^n\hookrightarrow H^1(L,E_i[p^n])$, for every finite extension $L$ of $K$, then this description can be used in order to  compute the image of 
\[(E_1/p^n\otimes^M E_2/p^n)(K)\stackrel{s_{p^n}}{\longrightarrow}H^2(K,E_1[p^n]\otimes E_2[p^n]), \] using known facts about the classical Galois symbol. 
\item After computing the image, one could try to show an isomorphism, 
\[
	\begin{tikzcd} 
	(E_1/p^n\otimes^M E_2/p^n)(K)\ar{r}{\simeq}\ar{d} & \img(s_{p^n})\\
	K(K;E_1,E_2)/p^n\ar{ur}{s_{p^n}}.
	\end{tikzcd}
	\] This would imply that the projection $(E_1/p^n\otimes^M E_2/p^n)(K)\twoheadrightarrow K(K;E_1,E_2)/p^n$ is an equality and in particular the diagonal $s_{p^n}$ is an isomorphism as well. 
\end{itemize}
\medskip

Following the above method, Raskind and Spiess established injectivity of $s_{p^n}$  for abelian varieties with a mixture of good ordinary and split multiplicative reduction under the above $K$-rationality assumption. Their work has since been generalized by Yamazaki (\cite{Yamazaki2005}) who managed to remove the  assumption but only for abelian varieties with split multiplicative reduction. More recently Hiranouchi (\cite{Hiranouchi2014}) extended the original computation to include also supersingular reduction elliptic curves. We will review his result in the next section. See also \cite{Murre/Ramakrishnan2009} for an alternative proof for the self product $E\times E$ of an ordinary reduction elliptic curve. 

\begin{rem} We note that for elliptic curves $E_1,\cdots, E_r$ over the $p$-adic field $K$ with $r\geq 3$, proving \autoref{conjecture} amounts to showing the $K$-group $K(K;E_1,\cdots, E_r)$ is divisible. As we will see later in the paper \autoref{morecurves},  this usually follows as an easy corollary after proving the conjecture for the product of two elliptic curves.  
\end{rem}

\begin{rem}\label{MackeyRemark} Raskind and Spiess imagined that the Mackey functor relation should be enough to establish injectivity when working with abelian varieties over $p$-adic fields, while the function field relation of $K(K;A_1,\cdots, A_r)$ is expected to be crucial for varieties over number fields. We will show in the forthcoming sections, however, that without the $K$-rationality assumption, injectivity is not always guaranteed by the Mackey functor relations, even over $p$-adic fields. 
\end{rem}
\vspace{1pt}
\subsection{\texorpdfstring{Decomposing the Mackey functor $E/p$ for an elliptic curve $E$}{Decomposing the Mackey functor E/p for an elliptic curve E}}  Let $E$ be an elliptic curve over $K$ such that $E[p]\subset E(K)$. In this subsection we review the aforementioned realization of the Mackey functor $E/p$ as a subfunctor of $\G_m/p$. We first need some information about the filtration of $K^\times$ arising from the groups of units, $\mathcal{O}_K^\times\supset U^1_K\supset U^2_K\supset\cdots$, where $U^i_K=1+\mathfrak{m}_K^i$. 
\subsection*{The Unit groups as Mackey Functors} We assume $\mu_p\subset K$. We define the following filtration of the group $K^\times/p:=K^\times/(K^{\times})^p$. For $i\geq 0$, 
\[\overline{U}^i_K:=\img(U^i_K\rightarrow K^\times/p).\] 
Note that the assumption $\mu_p\subset K$ implies that $p-1$ divides the absolute ramification index $e_K$. We denote $\displaystyle e_0(K):=\frac{e_K}{p-1}$. 
The graded quotients $\overline{U}^i_K/\overline{U}_K^{i+1}$ are known to satisfy the following. 
\begin{lem}\label{units}(\cite[Lemma 2.1.4]{kawachi2002}) Assume $\mu_p\subset K$. 
\begin{enumerate}[(a)]
\item If $0\leq i< pe_0(K)$ and $i$ is coprime to $p$, then $\overline{U}^i_K/\overline{U}_K^{i+1}\simeq
k$.
\item If $0\leq i< pe_0(K)$ and $i$ is divisible by $p$, then 
$\overline{U}^i_K/\overline{U}_K^{i+1}\simeq
1$.
\item If $i=pe_0(K)$, then $\overline{U}^i_K/\overline{U}_K^{i+1}\simeq
\Z/p$.
\item If $i>pe_0(K)$, then $\overline{U}_K^i=1$. 
\end{enumerate}
\end{lem}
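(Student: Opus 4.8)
The plan is to compute each graded piece $\overline{U}^i_K/\overline{U}^{i+1}_K$ by comparing it with the analogous graded pieces $U^i_K/U^{i+1}_K$ of the full unit filtration, which are classically understood: one has $U^i_K/U^{i+1}_K \simeq k$ for $i\geq 1$ (via $1+\pi^i u \mapsto \bar u$, with $\pi$ a uniformizer) and $\mathcal{O}_K^\times/U^1_K\simeq k^\times$. The map controlling the passage to $K^\times/p$ is the $p$-th power map, and the key input is the behavior of the filtration under raising to the $p$-th power: if $x = 1+\pi^i u \in U^i_K$ with $i<e_0(K)\cdot(p)$... more precisely, the standard computation shows that $x\mapsto x^p$ sends $U^i_K$ into $U^{i+p}_K$ when $i<pe_0(K)/\!\!\!\phantom{x}$... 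I would record the precise statement: $(U^i_K)^p \subseteq U^{i+e_K}_K$ with equality of induced maps on graded pieces $U^i_K/U^{i+1}_K \to U^{i+e_K}_K/U^{i+e_K+1}_K$ given (up to the Frobenius twist) by multiplication on $k$ for $i$ in the appropriate range, while for $i$ near $pe_0(K)$ the binomial term $\binom{p}{1}\pi^i u = p\pi^i u$ (which lands in $U^{i+e_K}_K = U^{i+(p-1)e_0(K)}_K$) competes with the leading term $\pi^{pi}u^p$, and these coincide in filtration level exactly when $pi = i + e_K$, i.e.\ $i = e_0(K)$. This crossover is the structural heart of the lemma.

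First I would set up the snake/diagram argument: for each $i$, the inclusion $U^i_K\hookrightarrow K^\times$ composed with $K^\times\twoheadrightarrow K^\times/p$ has image $\overline{U}^i_K$ by definition, and $\overline{U}^i_K/\overline{U}^{i+1}_K$ is a quotient of $U^i_K/U^{i+1}_K\simeq k$ (for $i\geq 1$). So in all cases the graded piece is a quotient of $k$, hence a quotient of the additive group of $k$; the content is to identify which elements of $U^i_K$ become $p$-th powers modulo $U^{i+1}_K$. Concretely, $\overline{U}^i_K = \overline{U}^{i+1}_K$ iff $U^i_K \subseteq (K^\times)^p \cdot U^{i+1}_K$, and $\overline{U}^i_K/\overline{U}^{i+1}_K$ is the cokernel of the map from (the relevant graded piece of) $(K^\times)^p$ into $U^i_K/U^{i+1}_K$.

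Then I would run the case analysis following the crossover at $i=pe_0(K)$:
\begin{itemize}
\item For $1\leq i < pe_0(K)$ with $p\nmid i$: no $p$-th power from a unit of the form $1+\pi^j v$ can have leading term in degree exactly $i$ (the degrees hit by $p$-th powers in this range are multiples of $p$, coming from $(1+\pi^{i/p}v)^p \equiv 1+\pi^i v^p$), so the map $(K^\times)^p\to U^i_K/U^{i+1}_K$ is zero and the quotient is all of $k$, giving (a).
\item For $1\leq i < pe_0(K)$ with $p\mid i$, say $i=pj$: now $(1+\pi^j v)^p \equiv 1+\pi^{i}v^p \pmod{U^{i+1}_K}$ (the cross term $p\pi^j v$ has degree $j+e_K > i$ since $i<pe_0(K)$ forces $j < e_0(K)$, hence $j+e_K = j+(p-1)e_0(K) > pj = i$), and $v\mapsto v^p$ is surjective on $k$ (here one uses that $k$ is the residue field of a local field of residue characteristic $p$, so Frobenius is surjective — $k$ perfect, or at least that we may pass to it; in the equal characteristic case this is automatic, in mixed characteristic $k$ is perfect). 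Hence the quotient is trivial, giving (b).
\item For $i=pe_0(K)$: here $j=e_0(K)$, and the cross term $p\pi^j v$ now has degree $j+e_K = e_0(K)+(p-1)e_0(K) = pe_0(K) = i$, \emph{the same degree} as the leading term $\pi^i v^p$. So modulo $U^{i+1}_K$, $(1+\pi^{e_0(K)}v)^p \equiv 1 + \pi^i(v^p + \bar c\, v)$ for an explicit unit $\bar c\in k^\times$ depending on $p/\pi^{e_K}$. The image of $(K^\times)^p$ in $k$ is therefore the image of the additive polynomial $v\mapsto v^p+\bar c v$, whose cokernel is an $\mathbb{F}_p$-line (this is an Artin–Schreier-type quotient: $v^p - (-\bar c)v$ is additive with kernel $\mathbb{F}_p$-dimension $1$, hence cokernel also of order $p$ when $k$ is finite, or one invokes the relevant surjectivity/cokernel statement). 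This gives $\overline{U}^i_K/\overline{U}^{i+1}_K\simeq \mathbb{Z}/p$, proving (c).
\item For $i>pe_0(K)$: I would show $U^i_K\subseteq (K^\times)^p$ directly, e.g.\ by a successive-approximation / Hensel-type argument solving $x^p = u$ for $u\in U^i_K$ once $i$ exceeds $pe_0(K) = p e_K/(p-1)$, which is precisely the bound past which the $p$-th power map is surjective onto high units (the derivative $p x^{p-1}$ has valuation $e_K$, and $i - e_K > e_K/(p-1)$ makes Newton's method converge). Hence $\overline{U}^i_K$ is trivial, giving (d).
\end{itemize}

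The main obstacle I anticipate is item (c) — pinning down that the crossover graded piece is exactly $\mathbb{Z}/p$ and not something larger or smaller. This requires (i) correctly identifying that $i=pe_0(K)$ is the unique degree where the linear term $p\pi^{e_0(K)}v$ and the $p$-th power term $\pi^{pe_0(K)}v^p$ collide, (ii) writing down the resulting additive map $v\mapsto v^p + \bar c v$ on $k$ with the correct constant, and (iii) computing its cokernel — which is clean when $k$ is finite (an additive polynomial $\mathbb{F}_p$-linear map on a finite $\mathbb{F}_p$-vector space has equal kernel and cokernel dimension, and the kernel is the $\mu_p\subset K$ contribution forcing dimension exactly $1$) but needs the hypothesis $\mu_p\subset K$ to be used in precisely the right place. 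Since the statement is cited from \cite{kawachi2002}, I would ultimately defer the delicate constant-chasing to that reference and present the argument above as the conceptual skeleton, emphasizing the crossover mechanism and the role of the hypothesis $\mu_p\subset K$ (equivalently $p-1\mid e_K$, so that $e_0(K)$ is an integer and the degrees in question are genuine filtration indices).
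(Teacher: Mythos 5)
Your argument is correct in outline, but note that the paper itself gives no proof of this lemma at all --- it is quoted verbatim from Kawachi (\cite[Lemma 2.1.4]{kawachi2002}), and your computation (the crossover of the terms $p\pi^{e_0(K)}v$ and $\pi^{pe_0(K)}v^p$ at $i=pe_0(K)$, the Artin--Schreier-type map $v\mapsto v^p+\bar c v$ whose kernel is accounted for by $\zeta_p-1$ having valuation $e_0(K)$, and the successive-approximation argument past $pe_0(K)$) is exactly the standard proof underlying that reference, so there is nothing to compare within the paper. Two small points to tidy if you write it out: part (b) also includes $i=0$, which is handled separately by the fact that $k^\times$ has order prime to $p$ (Teichm\"uller lifting makes every unit a $p$th power modulo $U^1_K$), and in part (a) one should also note that $p$th powers of elements of $U^j_K$ with $j\geq e_0(K)$ land in filtration degree at least $pe_0(K)$, hence outside the range $i<pe_0(K)$, so they contribute nothing either.
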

\begin{defn} For every $i\geq 0$, we define a Mackey functor, $\overline{U}^i$ as follows. If $L$ is a finite extension of $K$, then 
\[\overline{U}^i(L):=\overline{U}_L^{ie(L/K)}.\] For a finite extension $F/L$, the norm $N_{F/L}$ and restriction maps $\res_{F/L}$ are induced by the norm and restriction on $\G_m$. 
\end{defn}
\begin{theo}\label{decomposition}(\cite{kawachi2002}, \cite{takemoto}) Let $E$ be an elliptic curve over $K$ with split semistable reduction such that $E[p]\subset E(K)$. The Mackey funtor $E/p$ is calculated as follows,
\[E/p=\left\{
        \begin{array}{lll}
      \G_m/p, & \text{if E is a Tate curve} \\
     \overline{U}^0\oplus\overline{U}^{pe_0(K)}, & \text{if E has ordinary reduction}\\
     \overline{U}^{pt}\oplus\overline{U}^{p(e_0(K)-t)}, & \text{if E has supersingular reduction.}
        \end{array}
      \right.
\] 
\end{theo}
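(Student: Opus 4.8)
The plan is to compute the Mackey functor $E/p$ by evaluating $E(L)/p$, together with its norm and restriction maps, on every finite extension $L/K$, and then to recognize the answer inside $\G_m/p$ (or a sum of unit‑filtration Mackey functors) via the Kummer map. The Tate curve is the easy case: here $E\cong\G_m/q^{\Z}$ over $K$ for some $q$ with $v_K(q)>0$, the Weil pairing forces $\mu_p\subset K$, and inspecting the $p$-torsion of $\G_m/q^{\Z}$ shows that $E[p]\subset E(K)$ forces $q\in(K^\times)^p$; then $q\in(L^\times)^p$ for every finite $L/K$, so $E(L)/p=(L^\times/q^{\Z})/p=L^\times/(L^\times)^p$, and since the uniformization $\overline{K}^\times\twoheadrightarrow E(\overline{K})$ is Galois-equivariant this identification commutes with norms and restrictions; hence $E/p\cong\G_m/p$.

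For good reduction, again $E[p]\subset E(K)$ gives $\mu_p\subset K$, so $e_0(K)=e_K/(p-1)$ is an integer. For each finite $L/K$ I would use the reduction sequence $0\to\widehat{E}(\mathfrak{m}_L)\to E(L)\to\widetilde{E}(k_L)\to 0$, the $\mathfrak{m}_L$-adic filtration $F^j:=\widehat{E}(\mathfrak{m}_L^j)$ on the kernel of reduction (with $F^j/F^{j+1}\cong\mathfrak{m}_L^j/\mathfrak{m}_L^{j+1}$), and the snake lemma, which expresses $E(L)/p$ in terms of $\widehat{E}(\mathfrak{m}_L)/p$, $\widetilde{E}(k_L)/p$ and the reduction map on $p$-torsion. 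Since $E[p]\cong(\Z/p)^{\oplus 2}$ is a constant Galois module, Kummer theory realizes $E/p$ as a sub-Mackey functor of $H^1(-,E[p])\cong(\G_m/p)^{\oplus 2}$; choosing a basis $e_1,e_2$ of $E[p]$ with $e_1$ generating the connected part $\widehat{E}[p]$ (so that $e_2$ maps to a generator of the étale $p$-torsion) makes this embedding compatible with the above filtration.

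The heart of the proof is to locate $E[p]$ in the $\mathfrak{m}_L$-adic filtration and to transport everything into the unit filtration $\overline{U}^{\bullet}_L$. The location is read off from the Newton polygon of the power series $[p](T)$ on the formal group. In the ordinary case $\widetilde{[p]}(T)=(\mathrm{unit})\cdot T^{p}$, so $\widehat{E}[p]$ has order $p$ with nonzero points at level $e_0(K)$, while the complementary $\Z/p$ reduces isomorphically onto $\widetilde{E}(k)[p]$; correspondingly the étale $p$-torsion produces only unramified Kummer classes, matching the line $\overline{U}^{\,pe_0(L)}_L$ in the $e_2$-coordinate, and $\widehat{E}(\mathfrak{m}_L)/p$ injects (via $e_1$) onto $\overline{U}^{0}_L$, because the $p$-division points of $\widehat{E}$ generate extensions of bounded (unit-class) ramification and an order count makes the injection onto. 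In the supersingular case $\widetilde{[p]}(T)=(\mathrm{unit})\cdot T^{p^{2}}$, so the Newton polygon breaks at $(p,m)$ with $m=v_K(b_p)$, $b_p$ the coefficient of $T^{p}$ in $[p](T)$; the canonical subgroup $\mathcal{C}\subset\widehat{E}[p]$ of order $p$ splits $\widehat{E}(\mathfrak{m}_L)/p$, and translating the two resulting levels through Kummer ramification theory produces exactly $\overline{U}^{\,pt}_L$ and $\overline{U}^{\,p(e_0(K)-t)}_L$, where $t$ is determined by $m$ (i.e.\ by the finer reduction data of $E$). In all cases one then checks that the two pieces are direct summands as Mackey functors, which follows from the adapted basis $e_1,e_2$ together with the evident splitting of $(\G_m/p)^{\oplus 2}$.

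The main obstacle is this last step. An order count is useful for bookkeeping — one computes $|E(L)/p|=p^{[L:\Q_p]+2}$, which equals $|\overline{U}^{0}_L\oplus\overline{U}^{\,pe_0(L)}_L|$ and, for \emph{every} admissible $t$, $|\overline{U}^{\,pt}_L\oplus\overline{U}^{\,p(e_0(K)-t)}_L|$ — but it detects neither the norm maps nor the value of $t$. The genuine work is therefore (i) the Newton-polygon / canonical-subgroup analysis that pins down where $E[p]$ sits and, in the supersingular case, determines $t$ and checks compatibility with the hypothesis $E[p]\subset E(K)$ (which in particular forces $e_0(K)$ to be large enough for a canonical subgroup to exist); and (ii) the ramification-theoretic identification of the formal-group filtration on $E(L)/p$ with $\overline{U}^{\bullet}_L$ \emph{naturally in $L$}, i.e.\ compatibly with all norm and restriction maps. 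I expect essentially all of the difficulty to be concentrated in (i)--(ii).
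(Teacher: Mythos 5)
A preliminary remark: the paper does not prove this statement at all — it is imported from Kawachi and Takemoto — so there is no internal proof to compare with; your sketch does follow the same general route as those sources (Kummer embedding $E(L)/p\hookrightarrow H^1(L,E[p])\simeq (L^\times/p)^{\oplus 2}$, location of the torsion in the valuation filtration, ramification-theoretic identification of the image with unit-filtration subgroups, naturality in $L$), and your Tate-curve case is complete. But as a proof the proposal has genuine gaps at exactly the two places you defer to. In the supersingular case your mechanism rests on a canonical subgroup, and the parenthetical claim that $E[p]\subset E(K)$ forces one to exist is false: nothing in the hypothesis prevents the Newton polygon of $[p](T)$ from being a single segment, so that all nonzero points of $\hat{E}[p]$ have the same valuation $e_K/(p^2-1)$. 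This happens, for instance, after base change to $K=\Q_p(E[p])$ of a supersingular curve defined over $\Q_p$, where $v_K(b_p)\geq e_K$ lies strictly above the segment from $(1,e_K)$ to $(p^2,0)$; there is then no break at $(p,m)$, and $t$ is not determined by $m=v_K(b_p)$ in the way you assert. The argument has to be run instead by factoring $[p]$ through an arbitrary order-$p$ subgroup $C\subset\hat{E}[p]$ (rational because $E[p]\subset E(K)$) and applying the degree-$p$ isogeny computation of Kawachi to each of the two factors; that is what the cited proofs actually do.

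The second gap is the assertion that the two pieces are direct summands ``from the adapted basis together with the evident splitting of $(\G_m/p)^{\oplus 2}$.'' What must be proved is that the Kummer image of $E(L)/p$ in $(L^\times/p)^{\oplus 2}$ is \emph{equal} to $\overline{U}_L^{ae(L/K)}\times\overline{U}_L^{be(L/K)}$ for every $L$, and your order count cannot see this. In the ordinary case, for example, one knows that the image contains $\overline{U}^0_L\times 0$ (the formal part, by Kawachi), that its projection to the \'{e}tale coordinate is the unramified line $\overline{U}^{pe_0(L)}_L$, and that it has the expected order; but all of this is equally consistent with the image being generated by $\overline{U}^0_L\times 0$ together with a class $(c,u)$ whose first coordinate has valuation prime to $p$ (the image is a maximal isotropic subgroup for the Weil-pairing cup product, but so are many subgroups other than the product). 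Excluding this — i.e.\ showing a lift of a generator of $\overline{E}(k_L)/p$ can be taken with unit connected coordinate — and the analogous, harder statement in the supersingular case, together with compatibility with norms and restrictions for all $L'/L$, is precisely the content of the Kawachi–Takemoto computation. Since you acknowledge that all of the difficulty sits in your steps (i)–(ii) but do not carry them out, the proposal is a reasonable plan along the lines of the cited references rather than a proof, and the canonical-subgroup step as stated would fail.
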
 For the case of a supersingular reduction elliptic curve, there is an invariant $t$ that appears in the above decomposition. This invariant is defined as follows. 
\begin{defn} For an elliptic curve $E$ over $K$ with supersingular reduction such that $E[p]\subset E(K)$, the invariant $t$  is defined to be
\[t:=\max\{i\geq 0:P\in\hat{E}(\mathfrak{m}^i_K), \;\text{for every }P\in E[p]\}.\] 
\end{defn} The fact that $E$ has supersingular reduction yields that $t\geq 1$. Moreover, $t<pe_0(K)$. This is because for every $j\geq pe_0(K)$ the group $\hat{E}(\mathfrak{m}_K^j)$ is known to be torsion free. (\cite{Silverman2009})

\begin{rem} We note that the decomposition given in \autoref{decomposition} is constructed using the image of the Kummer map, $E(L)/p\hookrightarrow H^1(L,E[p])$, for $L$ a finite extension of $K$. In fact, the assumption $E[p]\subset E(L)$ for every such extension $L$ gives an isomorphism between  $H^1(L,E[p])$ and  $L^\times/p\oplus L^\times/p$, so via the Kummer map we may view $E(L)/p$ as subgroup of $L^\times/p\oplus L^\times/p$. 
This compatibility allows us to use the above decomposition for the generalized Galois symbol, $(E_1\otimes^M E_2)/p(K)\rightarrow H^2(K,E_1[p]\otimes E_2[p])$. 
\end{rem} 
\begin{exmp}\label{generalizedtoclassical} Assume for example that $E_1$ is an elliptic curve with good ordinary reduction and $E_2$ is a Tate curve, with $E_i[p]\subset E_i(K)$ for $i=1,2$. Let $a\in E_1(K)$ and $b\in E_2(K)$ be two closed points. Under the decomposition given by \autoref{decomposition}, the image of $b\in E_2(K)/p$ can be thought of as the class of a point $b\in K^\times/p$. Moreover, the image of $a\in E_1(K)/p$ is of the form $(a_1,a_2)$ with $a_1\in\overline{U}^0_K$ and $a_2\in\overline{U}_K^{pe_0(K)}$. Then, 
\[s_p(\{a,b\}_{K/K})=(g_p(\{a_1,b\}),g_p(\{a_2,b\}))\in\Z/p\oplus\Z/p,\] where $g_p:K_2^M(K)/p\rightarrow Br(K)[p]\simeq\Z/p$ is the classical Galois symbol. 
\end{exmp}

\vspace{2pt}

\section{The main theorems} We make the following assumption for the rest of this section. 
\begin{ass}\label{assumption}  Unless otherwise specified, \textit{$E_1,E_2$ shall denote elliptic curves over the $p$-adic field $K$, both with split semistable reduction, and such that at least one of them does not have supersingular reduction}. Moreover, if $E$ is an elliptic curve over $K$ with good reduction, we will denote by $\mathcal{E}$ its N\'{e}ron model, (which is an abelian scheme over $\Spec(\mathcal{O}_K)$) and by $\overline{E}:=\mathcal{E}\otimes_{\mathcal{O}_K}k$ the special fiber (which is an elliptic curve over the residue field $k$). 
\end{ass} 
 We consider the local Galois symbol 
\[s_{p^n}:K(K;E_1,E_2)/p^n\rightarrow H^2(K,E_1[p^n]\otimes E_2[p^n]).\]

We recall the following result. 
\begin{theo}\label{Hiranouchiinjec} (Hiranouchi, \cite{Hiranouchi2014}) If $E_i[p^n]\subset E_i(K)$ for $i=1,2$, the map $s_{p^n}$ is injective. 
\end{theo}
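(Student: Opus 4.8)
The plan is to follow the Raskind--Spiess strategy outlined in the previous section, reducing the injectivity of the generalized Galois symbol $s_{p^n}$ to concrete facts about the classical Galois symbol $g_{p^n}\colon K_2^M(K)/p^n\to H^2(K,\mu_{p^n}^{\otimes 2})$. The first step is to exploit the $K$-rationality hypothesis $E_i[p^n]\subset E_i(K)$: this forces $\mu_{p^n}\subset K$ (via the Weil pairing), so we may fix isomorphisms $E_i[p^n]\simeq(\mu_{p^n})^{\oplus 2}$ and thereby identify
\[
H^2(K,E_1[p^n]\otimes E_2[p^n])\simeq\bigoplus^4 H^2(K,\mu_{p^n}^{\otimes 2})\simeq\bigoplus^4 Br(K)[p^n]\simeq\bigoplus^4\Z/p^n .
\]
Under this identification the Kummer maps $E_i(L)/p^n\hookrightarrow H^1(L,E_i[p^n])$ are compatible with the Kummer map $L^\times/p^n\hookrightarrow H^1(L,\mu_{p^n})$ in each coordinate, so $s_{p^n}$ is computed, coordinate by coordinate, by the classical symbol $g_{p^n}$ as in \autoref{generalizedtoclassical}.

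Next I would realize each Mackey functor $E_i/p^n$ as a subfunctor of $\G_m/p^n$. For $n=1$ this is exactly \autoref{decomposition}: depending on whether $E_i$ is a Tate curve, has good ordinary reduction, or has supersingular reduction, $E_i/p$ is $\G_m/p$, $\overline U^0\oplus\overline U^{pe_0(K)}$, or $\overline U^{pt}\oplus\overline U^{p(e_0(K)-t)}$. For general $n$ one uses the analogous higher-level decomposition (the theory of $p$-adic uniformization / formal group filtrations, or the explicit $p^n$-level computations of Kawachi--Takemoto used by Hiranouchi), which again presents $E_i/p^n$ as a sum of two ``unit-group'' Mackey subfunctors of $\G_m/p^n$ — here one uses the hypothesis that at most one curve is supersingular only insofar as the full statement of \autoref{maintheoremintro} needs it; for the present lemma both curves are allowed arbitrary split semistable reduction since the $K$-rationality assumption is doing all the work. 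Using these realizations, one computes the image of
\[
(E_1/p^n\otimes^M E_2/p^n)(K)\xrightarrow{\;s_{p^n}\;}H^2(K,E_1[p^n]\otimes E_2[p^n])
\]
by invoking the classical criterion in the boxed display of the previous section: $g_{p^n}(\{x,y\})=0$ iff $x$ is a norm from $K(\sqrt[p^n]{y})$. The symbols appearing are of the form $\{u,u'\}$ with $u\in\overline U^a_K$, $u'\in\overline U^b_K$ (or one of them in $\G_m/p^n$), and the graded-quotient structure of $\overline U^\bullet_K/p^n$ controls exactly which such symbols can vanish.

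The crux of the argument — and the step I expect to be the main obstacle — is the third bullet of the Raskind--Spiess program: showing that the surjection
\[
(E_1/p^n\otimes^M E_2/p^n)(K)\twoheadrightarrow K(K;E_1,E_2)/p^n
\]
is actually an isomorphism, equivalently that the function-field (Weil reciprocity) relations $R_2$ are already implied mod $p^n$ by the Mackey (projection-formula) relations $R_1$. Concretely I would argue that the projection formula, combined with the norm relation $N_{L/K}\circ\res_{L/K}=[L:K]$ and the fact that over the $p$-adic field $K$ the relevant unit-group Mackey products $(\overline U^a\otimes^M\overline U^b)(K)$ are generated — modulo $R_1$ — by symbols $\{u,u'\}_{K/K}$ with $K$-rational entries, collapses $(E_1/p^n\otimes^M E_2/p^n)(K)$ onto a group no larger than $\img(s_{p^n})$; since $K(K;E_1,E_2)/p^n$ sits between these two and $s_{p^n}$ factors through it, all three must coincide and $s_{p^n}$ is an isomorphism, in particular injective. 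Making this ``symbols reduce to $K$-rational symbols'' step precise for every $n\ge 1$ — controlling the ramification of the auxiliary extensions $K(\sqrt[p^n]{y})$ and verifying the norm conditions uniformly in $n$ — is where the real work lies, and it is precisely where the $K$-rationality hypothesis $E_i[p^n]\subset E_i(K)$ (which lets us trivialize the Galois action on all the $p^n$-torsion simultaneously) is indispensable.
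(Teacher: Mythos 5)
This theorem is not proved in the paper at all: it is quoted from Hiranouchi, and the paper only records the architecture of his argument, namely an involved case-by-case proof of injectivity of $s_p$ using the mod-$p$ decomposition of \autoref{decomposition}, followed by induction on $n$ and diagram chasing to pass from $s_p$ to $s_{p^n}$. Your proposal instead tries to run the Raskind--Spiess program directly at level $p^n$, and the two places where it diverges from that route are exactly where it has gaps. First, you invoke ``the analogous higher-level decomposition'' of $E_i/p^n$ as a direct sum of unit-group Mackey subfunctors of $\G_m/p^n$; but \autoref{decomposition} (and the Kawachi--Takemoto results behind it) is only available mod $p$, and no such clean level-$p^n$ decomposition is established in the paper or easily extracted from the cited sources -- this is precisely why the passage from $p$ to $p^n$ is handled by induction and diagram chasing rather than by a level-$p^n$ computation, and your write-up gives no substitute for that inductive step. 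Second, the step you yourself flag as ``the crux'' -- that the projection-formula relations already force $(E_1/p^n\otimes^M E_2/p^n)(K)$ to inject into $H^2(K,E_1[p^n]\otimes E_2[p^n])$, equivalently that the surjection onto $K(K;E_1,E_2)/p^n$ is an isomorphism onto the image -- is the entire content of the theorem (this is Hiranouchi's ``involved argument''); declaring it to be ``where the real work lies'' without carrying it out leaves the proposal as a restatement of the strategy already sketched in Section 2, not a proof.

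There is also a substantive error of scope: you assert that both curves may have arbitrary split semistable reduction because the $K$-rationality hypothesis ``is doing all the work.'' The theorem is stated under \autoref{assumption}, which requires that at least one of $E_1,E_2$ not have supersingular reduction, and the introduction explicitly warns that for a product of two supersingular curves the projection-formula relations do not appear to suffice even to make $T(E_1\times E_2)/p$ finite; Hiranouchi's result likewise excludes that case. Concretely, in the supersingular-supersingular case the relevant Mackey products are of the form $\overline{U}^a\otimes^M\overline{U}^b$ with both $a,b>0$, and the computation of $(\overline{U}^0\otimes^M\overline{U}^s)(K)$ that drives all the arguments in this paper does not apply, so the $K$-rationality assumption alone cannot carry the proof. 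A correct treatment must keep the reduction-type hypothesis and either reproduce Hiranouchi's mod-$p$ argument plus the induction, or supply the missing level-$p^n$ structure theory.
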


Hiranouchi used an involved argument to prove injectivity of $s_p$. The injectivity of $s_{p^n}$ follows by diagram chasing and induction. 
\autoref{Hiranouchiinjec}, together with the computation of the image of $s_p$ by Hiranouchi and Hirayama (\cite[Theorem 3.4]{hiranouchi/hirayama}), yield the following theorem. 
\medskip
\begin{theo}\label{image1}(Hiranouchi, \cite{Hiranouchi2014}, Hiranouchi-Hirayama, \cite{hiranouchi/hirayama}) Assume that $E_i[p^n]\subset E_i(K)$ for $i=1,2$. Then 
\[\frac{K(K;E_1,E_2)}{p^n}\simeq \frac{E_1\otimes^M E_2}{p^n}(K)\simeq
\begin{cases}
	\Z/p^n, & \text{if }E_1, E_2\text{ have the same reduction type} \\  \Z/p^n\oplus\Z/p^n, & \text{if }E_1, E_2\text{ have different reduction type}
	\end{cases}\]
\end{theo}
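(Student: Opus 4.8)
The plan is to combine \autoref{Hiranouchiinjec} with the explicit computation of the image of $s_p$ due to Hiranouchi--Hirayama, and then bootstrap from $p$ to $p^n$ by a standard inductive argument, propagating the identification of all three groups (Mackey product, Somekawa $K$-group, and image of the Galois symbol) simultaneously. The key observation is that under the hypothesis $E_i[p^n]\subset E_i(K)$, the decomposition of $E_i/p$ in \autoref{decomposition} makes $s_p$ concrete in terms of the classical Galois symbol $g_p$, and the cited result of Hiranouchi--Hirayama computes $\img(s_p)$ to be $\Z/p$ (respectively $\Z/p\oplus\Z/p$) in the same-reduction-type (respectively different-type) case. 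Since \autoref{Hiranouchiinjec} tells us $s_{p^n}$ is injective, we obtain $K(K;E_1,E_2)/p^n\simeq\img(s_{p^n})$, so the whole problem reduces to computing $\img(s_{p^n})$ and showing the surjection $(E_1\otimes^M E_2)/p^n(K)\twoheadrightarrow K(K;E_1,E_2)/p^n$ is an isomorphism.

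For the first isomorphism $(E_1\otimes^M E_2)/p^n(K)\simeq K(K;E_1,E_2)/p^n$, I would argue that the composite surjection from the Mackey product down to $\img(s_{p^n})$ is already injective for size reasons once we control the Mackey product. Concretely, for $n=1$ the Mackey product $(E_1\otimes^M E_2)/p(K)$ surjects onto $K(K;E_1,E_2)/p$, which by \autoref{Hiranouchiinjec} injects into $\img(s_p)\cong\Z/p$ (or $\Z/p\oplus\Z/p$); one then needs the matching \emph{upper} bound on $(E_1\otimes^M E_2)/p(K)$, which follows by a generators-and-relations computation using the projection formula $R_1$ together with the subfunctor realizations of \autoref{decomposition} — the graded pieces of \autoref{units} force the Mackey product to be generated by a single symbol (or a pair of symbols) modulo $R_1$. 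This is essentially the content of \cite{hiranouchi/hirayama}, so I would invoke it. For general $n$, I would run induction on $n$ using the exact sequences relating $/p^n$ to $/p$ and $/p^{n-1}$: the snake lemma applied to multiplication-by-$p$ on the tower, combined with the $n=1$ case and the inductive hypothesis, pins down $(E_1\otimes^M E_2)/p^n(K)$, and the compatibility of $s_{p^n}$ with the transition maps (the commuting ladder of Kummer sequences) transports the isomorphism $\img(s_{p^n})\cong\Z/p^n$ (or $\Z/p^n\oplus\Z/p^n$) upward. The freeness of $\img(s_p)$ as a $\Z/p$-module and the injectivity of $s_{p^n}$ are what prevent any unwanted extension problems or collapsing in the induction.

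The main obstacle I anticipate is bookkeeping rather than a conceptual gap: one must verify that the explicit Hiranouchi--Hirayama description of $\img(s_p)$ — which is phrased via the realization $E_i/p\hookrightarrow\G_m/p$ and the classical Milnor criterion in the box before \autoref{MilnorCriterion} — is genuinely compatible with the Kummer maps $E_i(L)/p^n\hookrightarrow H^1(L,E_i[p^n])$ at \emph{all} finite $L/K$, not just at $K$, so that the Mackey-functor structure (norms and restrictions) is respected; this compatibility is exactly what \autoref{decomposition}'s remark asserts, but checking it interacts cleanly with the projection formula is the delicate point. A secondary subtlety is that in the different-reduction-type case the splitting $E_i[p^n]\simeq(\mu_{p^n})^{\oplus 2}$ must be chosen so that the four-fold decomposition of $H^2(K,E_1[p^n]\otimes E_2[p^n])$ isolates exactly the two surviving $\Z/p^n$ summands; tracking which of the four classical Galois symbols $g_{p^n}$ are forced to vanish (via the norm-residue criterion and the shape of the unit filtration, e.g. the ordinary curve living in $\overline{U}^0\oplus\overline{U}^{pe_0(K)}$ versus the Tate curve living in all of $\G_m/p$) is where the ``same type vs.\ different type'' dichotomy actually enters. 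Everything else — the induction on $n$, the diagram chase, passing to the limit — is routine once these compatibilities are in hand.
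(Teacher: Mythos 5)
Your proposal follows essentially the same route as the paper, which treats this statement as a citation: it combines \autoref{Hiranouchiinjec} (injectivity of $s_{p^n}$ under the rationality hypothesis) with the Hiranouchi--Hirayama computation of $\img(s_p)$ from \cite[Theorem 3.4]{hiranouchi/hirayama}, and passes from $p$ to $p^n$ by the same induction and diagram chase you describe. Your additional remarks on Kummer-map compatibility and the four-fold decomposition of $H^2$ are exactly the content already handled in the cited works, so there is no gap to fill.
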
 
\begin{rem}\label{generators} It is important to note that, while proving injectivity of $s_p$, Hiranouchi showed that the group $(E_1\otimes^M E_2)/p$ can be generated by symbols of the form $\{a,b\}_{K/K}$. 
\end{rem}
\medskip

Our first goal is to remove the strong  assumption $E_i[p^n]\subset E_i(K)$ and pass to the limit for $p^n$. 
 Starting with the case when $E_i[p]\subset E_i(K)$, for $i=1,2$, the following quite general lemma provides a sufficient criterion that guarantees injectivity of $s_{p^n}$ for every $n\geq 1$. 
\begin{lem}\label{torsionpointscriterion} Let $E_1,E_2$ be elliptic curves over $K$. Assume that $E_i[p^n]\subset E_i(K)$, $i=1,2$, for some $n\geq 1$ which is the largest with this property. Further, assume that the Galois symbol $s_p$ is injective.  If $K(K;E_1,E_2)/p$ can be generated by symbols of the form $\{a,b\}_{K/K}$ with either $a\in E_1[p^n]$ or $b\in E_2[p^n]$, then the group $p^n K(K;E_1,E_2)$ is $p$-divisible, that is $p^n K(K;E_1,E_2)=p^s K(K;E_1,E_2)$, for every $s>n$. In particular, the Galois symbol $s_{p^m}$ is injective for every $m\geq 1$. 
\end{lem}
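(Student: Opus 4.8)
The plan is to combine the bilinearity of Somekawa symbols with an elementary Nakayama-type argument to prove the divisibility statement, and then to deduce injectivity of $s_{p^m}$ for every $m$ from the case $m\le n$, which is part of (or follows from) the hypotheses. Throughout write $G:=K(K;E_1,E_2)$. The starting observation is that the symbol $\{a,b\}_{K/K}$ is bilinear in $a\in E_1(K)$ and $b\in E_2(K)$: this is immediate from the definition of the Mackey product $E_1(K)\otimes E_2(K)$ and the additivity of the norm, and it descends to $G$. Hence, if $a\in E_1[p^n]$ then $p^n\{a,b\}_{K/K}=\{p^na,b\}_{K/K}=\{0,b\}_{K/K}=0$ in $G$, and symmetrically if $b\in E_2[p^n]$. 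So every symbol of the type appearing in the hypothesis is killed by $p^n$.

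For the divisibility, I would let $H\subseteq G$ be the subgroup generated by a fixed set of such symbols whose classes generate $G/p$. A subgroup generated by $p^n$-torsion elements is itself $p^n$-torsion, so $p^nH=0$; and since the composite $H\hookrightarrow G\twoheadrightarrow G/p$ is surjective, $G=H+pG$. Multiplying by $p^n$ and using $p^nH=0$ gives $p^nG=p^{n+1}G$, and an immediate induction yields $p^nG=p^sG$ for all $s\ge n$. In particular $p\cdot(p^nG)=p^nG$, i.e.\ $p^nG$ is $p$-divisible, which is the main assertion.

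It remains to deduce that $s_{p^m}$ is injective for all $m\ge1$. For $m\le n$ this holds because $s_p$ is injective (the case $m=1$) and the general case follows by the standard diagram-chase induction --- or, more directly, by \autoref{Hiranouchiinjec}, since $E_i[p^m]\subseteq E_i[p^n]\subseteq E_i(K)$. For $m>n$, the equality $p^mG=p^nG$ just proved shows that the canonical surjection $\pi\colon G/p^m\twoheadrightarrow G/p^n$ is an isomorphism. I would then use the compatibility of Galois symbols under multiplication by $p^{m-n}$: the map $E_i[p^m]\xrightarrow{\,p^{m-n}\,}E_i[p^n]$ induces, via the Kummer maps, precisely the reduction $E_i(K)/p^m\to E_i(K)/p^n$ on $H^1$, so that by functoriality of cup product and corestriction one obtains a commutative square
\[
\begin{tikzcd}
G/p^m \ar{r}{s_{p^m}}\ar{d}{\pi} & H^2(K,E_1[p^m]\otimes E_2[p^m])\ar{d}\\
G/p^n \ar{r}{s_{p^n}} & H^2(K,E_1[p^n]\otimes E_2[p^n])
\end{tikzcd}
\]
whose left vertical is $\pi$. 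Since $\pi$ is an isomorphism and $s_{p^n}$ is injective, $s_{p^m}$ is injective.

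Granting \autoref{Hiranouchiinjec}, the argument is essentially formal, so there is no serious obstacle internal to the proof; the one point demanding care is the identification in the last step of the map on $p^m$-torsion induced by $p^{m-n}$ with the natural reduction map on Mordell--Weil groups, which is what makes the comparison square commute with $\pi$ --- rather than a twist of it --- on the left edge. The genuine content of the lemma is concentrated in its hypotheses, above all the requirement that $G/p$ be generated by symbols supported on $p^n$-torsion points; verifying this in each reduction type is what occupies the subsequent sections.
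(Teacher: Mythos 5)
Your proof is correct and follows essentially the same route as the paper: divisibility of $p^nK(K;E_1,E_2)$ comes from bilinearity killing the $p^n$-torsion symbols that generate modulo $p$ (the paper lifts a class of $G/p$ and multiplies by $p^n$, which is your $G=H+pG$, $p^nH=0$ argument), and injectivity uses \autoref{Hiranouchiinjec} for $m\le n$ plus the compatibility of $s_{p^m}$ with reduction for $m>n$. Your last step merely makes explicit the ``simple induction and diagram chasing'' the paper leaves to the reader, including the correct identification of the coefficient map $p^{m-n}$ with reduction via the Kummer maps.
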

\begin{proof} 
Let $x\in p^n K(K;E_1,E_2)$. We may write $x=p^ny$ for some $y\in K(K;E_1,E_2)$.
We consider the image of $y$ in $K(K;E_1,E_2)/p$. By the assumption of the lemma, we may write $y$ in the following form,
\begin{eqnarray}\label{relation} y=\sum_i\{a_i,b_i\}_{K/K}+pz,\end{eqnarray}
where $z\in K(K;E_1,E_2)$ and either $a_i\in E_1[p^n]$ or $b_i\in E_2[p^n]$. We conclude that  the element $p^n y$ is $p$-divisible. 



Notice that this implies that the Galois symbol $s_{p^m}$ is injective for every $m\geq 1$. For, if $m\leq n$, the injectivity follows from \autoref{Hiranouchiinjec}. On the other hand, if $x\in\ker(s_{p^{s}})$ for some $s>n$, a simple induction and diagram chasing shows that $x\in p^n K(K;E_1,E_2)$ and the claim follows by the $p$-divisibility of $p^n K(K;E_1,E_2)$. 

\end{proof} 
\begin{rem}\label{vanishing}
It is clear that if the elliptic curves $E_1,E_2$ satisfy \autoref{torsionpointscriterion}, then \autoref{maintheoremintro} holds for the product $X=E_1\times E_2$. Indeed, if $D$ is the maximal $p$-divisible subgroup of $K(K;E_1,E_2)$, then we can write $K(K;E_1,E_2)\simeq D\oplus F$ for some subgroup $F$. The lemma together with the fact that $K(K;E_1,E_2)/p^i$ is a finite group for every $i\geq 1$ imply that subgroup $F$ is finite. In many cases, we won't be able to verify the assumptions of  \autoref{torsionpointscriterion}. However, very often we will be able to show the weaker condition that the group $p^N K(K;E_1,E_2)$ is $p$-divisible for some $N\geq n$, by showing that the $K$-group $K(K;E_1,E_2)/p$ can be generated by symbols of the form $\{a,b\}_{K/K}$, with either $a\in E_1[p^N](K)$ or $b\in E_2[p^N](K)$, for some $N>n$. 
\end{rem} 

\begin{rem} We note that if either \autoref{torsionpointscriterion} or the weaker condition of \autoref{vanishing} holds, we can show that the finite summand $F$ of $K(K;E_1,E_2)$ can be generated  by symbols  $\{x,y\}_{K/K}$ defined over $K$, as long as this is true for the group $K(K;E_1,E_2)/p$. This follows  inductively using the exact sequence 
$K(K;E_1,E_2)/p\stackrel{p^{m}}{\longrightarrow} K(K;E_1,E_2)/p^{m+1}\longrightarrow K(K;E_1,E_2)/p^{m}\longrightarrow 0. $ When $E_1,E_2$ satisfy \autoref{assumption}, this has been proved by Hiranouchi (see \autoref{generators}). 
\end{rem}
\vspace{1pt}

\subsection{The product of two elliptic curves with ordinary reduction}  
Our first computation will be for the product $E_1\times E_2$ of two elliptic curves over $K$, both having good ordinary reduction. We start with a preliminary discussion, which includes some background on elliptic curves of such reduction type.
\subsection*{The connected-\'{e}tale exact sequence} Let $E$ be an elliptic curve over $K$ with good ordinary reduction, and $n\geq 1$ a positive integer. 
The $G_K$-module $E[p^{n}]$ has a one-dimensional $G_K$-invariant submodule. Namely, we have a short exact sequence of $G_K$-modules, 
\begin{eqnarray}\label{ses5}&&0\rightarrow 
		E[p^{n}]^\circ\rightarrow E[p^{n}]\rightarrow E[p^{n}]^{et}\rightarrow 0,\end{eqnarray}
where $E[p^{n}]^\circ:=\hat{E}[p^{n}]$ are the $p^{n}$-torsion points of the formal group $\hat{E}$ of $E$. 

If we further assume that $E[p^{n-1}]\subset E(K)$, then after a finite unramified extension $L_0/K$ of degree coprime to $p$, this sequence becomes \begin{eqnarray}\label{ses0}&&0\rightarrow 
		\mu_{p^{n}}\rightarrow E[p^{n}]\rightarrow \Z/p^{n}\rightarrow 0.\end{eqnarray}
\label{connectedetale} The  short exact sequence \eqref{ses5} is known as the \textit{connected-\'{e}tale exact sequence} for $E[p^{n}]$. The reason for the name is that this exact sequence can be obtained from the exact sequence of \textit{finite flat group schemes} over $\Spec(\mathcal{O}_K)$, 
\[0\rightarrow\mathcal{E}[p^{n}]^\circ\rightarrow\mathcal{E}[p^{n}]\rightarrow \mathcal{E}[p^{n}]^{et}\rightarrow 0,\] by extending to the generic fiber. Here we denoted by $\mathcal{E}$ the N\'{e}ron model of $E$. 
\medskip
\subsection*{The Serre-Tate parameter}\label{SerreTate} We next assume that $\mu_{p}\subset K$ and that we have  a  non-splitting  short exact sequence of finite flat group schemes over $\Spec(\mathcal{O}_K)$, 
\begin{eqnarray}\label{ses4}0\rightarrow 
		\mu_{p}\rightarrow \mathcal{E}[p]\rightarrow \Z/p\rightarrow 0.\end{eqnarray}  This in particular means that $\hat{E}[p]\subset \hat{E}(\mathcal{O}_K)$. 
		Notice that $\mathcal{E}[p]$ defines in this case a non-trivial element of $\mathcal{E}xt^1_{\mathcal{O}_K}(\Z/p,\mu_p)\simeq H^1_{fppf}(\mathcal{O}_K,\mu_p)$. This group is isomorphic to $\mathcal{O}^\times_K/\mathcal{O}_K^{\times p}$ and therefore the extension $\mathcal{E}[p]$ (or equivalently the Galois module $E[p]$) corresponds to a unit $u\in\mathcal{O}_K^\times$ that is not a $p$th power. That is, the sequence \eqref{ses4} becomes split after extending to the finite extension $K(\sqrt[p]{u})$. The unit $u$ is known as the Serre-Tate parameter of $E$. For more information we refer to \cite[Chapter 8, Section 9]{KatzMazur}. 
		
		Next we want to give a new interpretation of this unit $u$ that will be more helpful for our purposes. We first need some information about the Mackey functor $\hat{E}/[p]$, where $\hat{E}$ is the formal group of $E$ and $[p]:\hat{E}\rightarrow\hat{E}$ is the multiplication by $p$ isogeny. Because we assumed that $E$ has ordinary reduction, the isogeny $[p]$ has height one. Recall that $\hat{E}$ induces a Mackey functor which is defined at a finite extension $L/K$ as $\hat{E}(L):=\hat{E}(\mathcal{O}_L)$ with the obvious norm and restriction maps.

		\begin{prop}\label{formalgroup}  Let $E$ be an elliptic curve over $K$ with good ordinary reduction, and $\hat{E}$ be its formal group. Assume that $\hat{E}[p]\subset\hat{E}(\mathcal{O}_K)$ and $\mu_p\subset K$. Then we have an isomorphism of Mackey functors, $\hat{E}/[p]\simeq \overline{U}^1\simeq \overline{U}^0.$
		\end{prop}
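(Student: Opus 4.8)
The plan is to realise $\hat{E}/[p]$ as a sub-Mackey-functor of $\mathbb{G}_m/p$ by Kummer theory, and then to show, by a fibrewise order count, that this realisation is all of $\overline{U}^{0}$. The first step is to extract the two consequences of the hypothesis. As in the discussion of the Serre--Tate parameter above, the condition $\hat{E}[p]\subseteq\hat{E}(\mathcal{O}_K)$ forces $\mu_p\subseteq K$ and forces the connected--\'etale sequence of $\mathcal{E}[p]$ over $\Spec(\mathcal{O}_K)$ to take the shape $0\to\mu_p\to\mathcal{E}[p]\to\mathbb{Z}/p\to 0$: concretely the Weil pairing identifies $\hat{E}[p]=\mathcal{E}[p]^{\circ}$ with the Cartier dual of the \'etale quotient $E[p]^{et}$, so that $\hat{E}[p]\cong\mu_p$ as finite flat group schemes over $\mathcal{O}_K$, and likewise over $\mathcal{O}_L$ for every finite $L/K$ by base change. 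Since $E$ has ordinary reduction, $[p]\colon\hat{E}\to\hat{E}$ is finite flat of degree $p$ (its height is one), with kernel $\mu_p$.

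Next, for each finite extension $L/K$ I would run fppf cohomology over $\mathcal{O}_L$ on the short exact sequence $0\to\mu_p\to\hat{E}\xrightarrow{[p]}\hat{E}\to 0$ (one may instead phrase everything through the $p$-divisible group $\hat{E}[p^{\infty}]$ to avoid formal-scheme bookkeeping). Faithful flatness of $[p]$ gives a natural injection $\hat{E}(\mathcal{O}_L)/[p]\hookrightarrow H^1_{fppf}(\mathcal{O}_L,\mu_p)$, and the Kummer sequence for $\mathbb{G}_m$ over $\mathcal{O}_L$ together with $\Pic(\mathcal{O}_L)=0$ identifies the target with $\mathcal{O}_L^{\times}/(\mathcal{O}_L^{\times})^{p}$. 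Inside $L^{\times}/p$ this is precisely $\overline{U}^{0}_L$; and since $\mathcal{O}_L^{\times}=\mu_{q_L-1}\times U^1_L$ with $\mu_{q_L-1}$ of order prime to $p$, the prime-to-$p$ part dies modulo $p$, so $\overline{U}^{0}_L=\overline{U}^{1}_L$ --- the two Mackey functors coincide here. These boundary maps commute with base change and with corestriction (connecting homomorphisms are compatible with restriction and with transfer, and transfer on $H^1(-,\mu_p)$ is the norm on $\mathbb{G}_m$), so they assemble into a monomorphism of Mackey functors $\hat{E}/[p]\hookrightarrow\overline{U}^{0}\;(=\overline{U}^{1})$.

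It then remains to see this monomorphism is onto, which I would settle by comparing orders at each $L$. On the target side $\mathcal{O}_L^{\times}/(\mathcal{O}_L^{\times})^{p}\cong(1+\mathfrak{m}_L)/(1+\mathfrak{m}_L)^{p}$, and $1+\mathfrak{m}_L\cong\mathbb{Z}_p^{[L:\mathbb{Q}_p]}\oplus\mu_{p^{\infty}}(L)$ with $\mu_{p^{\infty}}(L)$ a nontrivial finite cyclic $p$-group (as $\mu_p\subseteq K\subseteq L$), so it has order $p^{[L:\mathbb{Q}_p]+1}$. On the source side $\hat{E}(\mathfrak{m}_L)$ is a finitely generated $\mathbb{Z}_p$-module, free of rank $[L:\mathbb{Q}_p]$ modulo its torsion subgroup, and that torsion subgroup is $\hat{E}[p^{\infty}](\mathcal{O}_L)$, which is finite cyclic (as $\hat{E}[p^{\infty}]$ has height one, so $\hat{E}[p](\overline{K})$ has order $p$) and nonzero (as $\hat{E}[p]\subseteq\hat{E}(\mathcal{O}_K)\subseteq\hat{E}(\mathcal{O}_L)$); hence $\hat{E}(\mathcal{O}_L)/[p]$ also has order $p^{[L:\mathbb{Q}_p]+1}$. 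An injection between finite groups of equal order is bijective, so $\hat{E}/[p]\xrightarrow{\sim}\overline{U}^{0}\cong\overline{U}^{1}$.

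I expect the genuine difficulty to be the very first step: pinning down $\hat{E}[p]\cong\mu_p$ over $\mathcal{O}_K$ as finite flat group schemes, not merely as $G_K$-modules. This is exactly where the hypothesis is used, and it rests on Cartier duality inside the connected--\'etale sequence (equivalently, on the classification of finite flat group schemes of order $p$). Everything after that is formal; the only technical points are that $[p]$ is an fppf epimorphism on $\hat{E}$ and that the Kummer classes in play are unramified at $\mathcal{O}_L$, both of which are automatic once one works with fppf cohomology over the ring $\mathcal{O}_L$ --- or with the $p$-divisible group $\hat{E}[p^{\infty}]$ --- from the outset.
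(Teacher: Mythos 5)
Your proposal is correct in outline but takes a genuinely different route from the paper. The paper does not redo Kummer theory by hand: it applies Kawachi's theorem (Theorem 2.1.6 and Corollary 2.1.7 of the cited paper of Kawachi) to the height-one isogeny $[p]\colon\hat{E}\rightarrow\hat{E}$, obtaining $\hat{E}(\mathcal{O}_L)/[p]\simeq\overline{U}_L^{\,p(e_0(L)-t(L))+1}$ for every finite $L/K$, and then pins down the invariant by the compatibilities $t(L')=e(L'/L)t(L)$ and $e_0(L')=e(L'/L)e_0(L)$, which reduce to the case of full level-$p$ rationality where \autoref{decomposition} forces $t(L)=e_0(L)$; the identification $\overline{U}^1=\overline{U}^0$ is then \autoref{units}. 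Your argument replaces this citation by flat Kummer theory over $\mathcal{O}_L$ together with an order count, which is more self-contained and makes visible \emph{why} the image is all of $\overline{U}^0$ rather than a deeper step of the unit filtration; the paper's route buys the Mackey-functor (norm/restriction) compatibilities directly from the cited results, whereas you must check them, which your remarks on compatibility of connecting maps with corestriction do cover. Your first step ($\hat{E}[p]\cong\mu_p$ over $\mathcal{O}_K$, hence over every $\mathcal{O}_L$) is fine once one notes that the Weil pairing plus the triviality of the Galois action on $\hat{E}[p]$ and on $\mu_p$ makes the \'{e}tale quotient the constant group scheme $\Z/p$ over $\mathcal{O}_K$, so its Cartier dual is honestly $\mu_p$ integrally.

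The one point you should not describe as automatic is the boundary map $\hat{E}(\mathcal{O}_L)/[p]\rightarrow H^1_{fppf}(\mathcal{O}_L,\mu_p)$. The formal group is not an fppf sheaf on $\Spec(\mathcal{O}_L)$ without some construction, and your proposed substitute does not literally work: the $\mathcal{O}_L$-points of the $p$-divisible group $\hat{E}[p^\infty]$ are only the torsion points, not $\hat{E}(\mathfrak{m}_L)$, so the sequence $0\rightarrow\hat{E}[p]\rightarrow\hat{E}[p^\infty]\rightarrow\hat{E}[p^\infty]\rightarrow 0$ does not compute $\hat{E}(\mathcal{O}_L)/[p]$. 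To make your middle step rigorous one either sets up $\hat{E}$ as a sheaf via limits over $\mathcal{O}_L/\mathfrak{m}_L^n$ (Mazur/Tate-style), or runs the flat Kummer sequence for the N\'{e}ron model $\mathcal{E}$ and compares with the formal group through the connected--\'{e}tale sequence, or simply quotes Kawachi's result as the paper does; the essential content of that citation is precisely that the Kummer classes of $\hat{E}(\mathcal{O}_L)$ land in $\mathcal{O}_L^\times/(\mathcal{O}_L^\times)^p=\overline{U}^0_L$. Once that containment is in place, your counting argument (both sides have order $p^{[L:\Q_p]+1}$, using $\mu_p\subset L$ and the nontrivial cyclic torsion of $\hat{E}(\mathfrak{m}_L)$) correctly finishes the proof.
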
 
		\begin{proof} The first isomorphism follows directly from \cite[Theorem 2.1.6, Corollary 2.1.7]{kawachi2002}, if we apply it to the height 1 isogeny, $[p]:\hat{E}\rightarrow\hat{E}$. To make this more precise, for every finite extension $L/K$ we have an isomorphism, $\hat{E}(\mathcal{O}_L)/[p]\hat{E}(\mathcal{O}_L)\simeq \overline{U}_L^{p(e_0(L)-t(L))+1}$, where the invariant $t(L)$ is defined as, \[t(L)=\max\{i\geq 0: P\in \hat{E}(\mathfrak{m}^i_L), \text{ for every }P\in\hat{E}[p]\}.\] 
We claim that $t(L)=e_0(L)$. Since for every finite extension $L'/L$ we have equalities, $t(L')=e(L'/L)t(L)$ and $e_0(L')=e(L'/L)e_0(L)$, it suffices to prove this equality after extending to $L(E[p])$. But then the result follows from \autoref{decomposition}. The second isomorphism follows from \autoref{units}. 
		
		\end{proof}
		
	 We next consider the short exact sequence of abelian groups, \begin{eqnarray}\label{ses1}0\rightarrow \hat{E}(\mathcal{O}_K)\stackrel{j}{\longrightarrow}E(K)\stackrel{r}{\longrightarrow}\overline{E}(k)\rightarrow 0,\end{eqnarray} where $E(K)\stackrel{r}{\longrightarrow} \overline{E}(k)$ is the reduction map.  By tensoring \eqref{ses1} with $\Z/p$ and using \autoref{formalgroup} we get an exact sequence,
\[\overline{U}^0_K\stackrel{j}{\longrightarrow} E(K)/p\stackrel{r}{\longrightarrow} \overline{E}(k)/p\rightarrow 0.\] The claim is that the map $\overline{U}_K^0\stackrel{j}{\longrightarrow}E(K)/p$ is not injective. Namely there is a unit $u\in \overline{U}_K^0$ that generates the kernel and this unit is the Serre-Tate parameter of $E$. To construct $u$, we proceed as follows. Let $b\in\overline{E}[p](k)$ be a $p$-torsion point  with $b\neq 0$. Such a point exists because $\overline{E}[p]\simeq E^{et}[p]\simeq\Z/p$. Since the reduction map is surjective, we may choose a lift $\tilde{b}$ of $b$ in $E(K)$. We claim that $r(p\tilde{b})=pb=0$, but $p\tilde{b}$ is nonzero. Indeed, if $p\tilde{b}=0$, then  $\tilde{b}$ would be a $K$-rational $p$-torsion point of $E$, which would contradict the non-splitting of the short exact sequence \eqref{ses4}.  Next,  the exactness of the sequence \eqref{ses1} yields the existence of a unit $u\in\mathcal{O}_K^\times$ such that $j(u)=p\tilde{b}$. The class of $u\in\mathcal{O}_K^\times/\mathcal{O}_K^{\times p}$ is independent of the choice of lift. To finish the claim, we need to verify that $u\not\in K^{\times p}$. Assume to the contrary that $u$ is a $p$th power, i.e., $u=v^p$ for some $v$ in $K^\times$. Then the equation $pj(v)=p\tilde{b}$ yields that $\tilde{b}-j(v)$ is a non-zero $p$-torsion point of $E$. Since $r(\tilde{b})=b\neq 0$, this would imply that $E[p]\subset E(K)$, which is a contradiction. 
\vspace{2pt} 
\subsection*{Injectivity in the wild case} In this subsection we consider the question of injectivity of the Galois symbol for two  elliptic curves $E_1,E_2$ with ordinary reduction. We will often work with the Mackey product, $(E_1\otimes^M E_2)(K)/p^n$ instead of the Somekawa $K$-group $K(K;E_1,E_2)/p^n$. To distinguish between the two groups, we will call the map  \[(E_1\otimes^M E_2)(K)/p^n\stackrel{s_{p^n}}{\longrightarrow} H^2(K,E_1[p^n]\otimes E_2[p^n])\] the \textit{Mackey functor Galois symbol}. Recall that the latter has the same image as the actual Galois symbol 
\[K(K;E_1,E_2)/p^n\stackrel{s_{p^n}}{\longrightarrow} H^2(K,E_1[p^n]\otimes E_2[p^n]),\] but it might have a larger kernel.
\medskip
\begin{theo}\label{ordord} Assume $\mu_p\subset K$. Let $E_1,E_2$ be elliptic curves over $K$ with good ordinary reduction and let $n\geq 0$ be the largest integer such that $E_i[p^n]\subset E_i(K)$, for $i=1,2$. Assume:  
\begin{itemize}
\item The extension $L=K(E_1[p^{n+1}],E_2[p^{n+1}])$ has wild ramification.
\item For $i=1,2$ we have short exact sequences of $G_K$-modules, 
\[0\rightarrow \mu_{p^{n+1}}\rightarrow E_i[p^{n+1}]\rightarrow \Z/p^{n+1}\rightarrow 0.\]
\end{itemize}
	 Then the Galois symbol $s_{p^{m}}:K(K;E_1,E_2)/p^{m}\rightarrow H^2(K,E_1[p^{m}]\otimes E_2[p^{m}])$ is injective for every $m\geq 1$. In particular, if $n=0$, the $K$-group $K(K;E_1,E_2)$ is $p$-divisible. 
\end{theo}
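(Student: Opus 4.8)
The strategy is to verify the hypotheses of \autoref{torsionpointscriterion} (in its stronger form, or at least the weaker form of \autoref{vanishing}) for the pair $E_1,E_2$. Since $n$ is the largest integer with $E_i[p^n]\subset E_i(K)$, and since \autoref{Hiranouchiinjec} already gives injectivity of $s_{p^m}$ for $m\leq n$, the whole problem reduces to showing that $K(K;E_1,E_2)/p$ (equivalently, since we may work with the Mackey product, $(E_1\otimes^M E_2)/p(K)$) is generated by symbols $\{a,b\}_{K/K}$ with $a\in E_1[p^{N}]$ or $b\in E_2[p^N]$ for some $N\geq n$, together with the injectivity of $s_p$. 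Once this is in place, \autoref{torsionpointscriterion} (resp.\ \autoref{vanishing}) yields that $p^NK(K;E_1,E_2)$ is $p$-divisible, hence $s_{p^m}$ is injective for all $m\geq 1$, and the final assertion for $n=0$ (that $K(K;E_1,E_2)$ itself is $p$-divisible) is the special case $N=0$.

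First I would pass to $L_0=K(E_1[p^n],E_2[p^n])$; by the connected-\'etale sequence and the argument around \eqref{ses0} this is an unramified extension of degree prime to $p$, so by a standard norm argument ($N_{L_0/K}\circ\res_{L_0/K}=[L_0:K]$ is invertible on the $p$-primary part) it is harmless to assume $E_i[p^n]\subset E_i(K)$ and, after a further such extension, that the sequences $0\to\mu_{p^{n+1}}\to E_i[p^{n+1}]\to\Z/p^{n+1}\to 0$ hold over $K$ itself with $\mu_p\subset K$. Next I would analyze the Mackey functor $E_i/p$ using \autoref{decomposition}: in the good ordinary case $E_i/p\simeq\overline{U}^0\oplus\overline{U}^{pe_0(K)}$. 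The $\overline{U}^0$-component corresponds (via the reduction sequence \eqref{ses1} and \autoref{formalgroup}) to the \'etale quotient $\overline{E_i}(k)/p\simeq\Z/p$ together with the Serre--Tate unit $u_i$ spanning the kernel of $\overline{U}^0_K\to E_i(K)/p$; the $\overline{U}^{pe_0(K)}$-component comes from the formal group / connected part $\mu_{p^{n+1}}$. The wild ramification hypothesis on $L=K(E_1[p^{n+1}],E_2[p^{n+1}])$ is precisely what forces the extensions $0\to\mu_{p^{n+1}}\to E_i[p^{n+1}]\to\Z/p^{n+1}\to 0$ to be non-split as flat group schemes, i.e.\ makes the Serre--Tate units $u_i$ genuine non-$p$th-powers, and this in turn is what makes the relevant local symbols computed via the classical Galois symbol $g_p$ (using the box displayed before \autoref{MilnorCriterion}) behave well.

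The heart of the argument is then a generation-and-injectivity computation for $(E_1\otimes^M E_2)/p(K)$, mirroring the three-step Raskind--Spiess/Hiranouchi method: realize $E_i/p$ inside $\G_m/p$ compatibly with the Kummer map, compute the image of the Mackey functor Galois symbol $s_p$ inside $H^2(K,E_1[p]\otimes E_2[p])$ using the box criterion and \autoref{generalizedtoclassical}, and show the projection $(E_1\otimes^M E_2)/p(K)\to\img(s_p)$ is an isomorphism, simultaneously extracting that the source is generated by symbols $\{a,b\}_{K/K}$; the wild-ramification input should further let one replace one of the two arguments in each generating symbol by a $p^{n+1}$-torsion point (the point $\tilde b$ with $j(u)=p\tilde b$ from the Serre--Tate discussion being the prototype), which is exactly the hypothesis needed for \autoref{torsionpointscriterion}. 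The main obstacle I anticipate is this last point: carefully tracking how the connected ($\mu_{p^{n+1}}$) versus \'etale ($\Z/p^{n+1}$) pieces of $E_1[p^{n+1}]\otimes E_2[p^{n+1}]$ contribute, using that a symbol built from two connected parts or two \'etale parts is controlled by $\mu_{p^{n+1}}\otimes\mu_{p^{n+1}}$ or by a $p$-divisibility coming from unramified cohomology, while the cross terms are the ones producing the finite summand—and showing in each case that the symbol can be rewritten with a torsion-point entry. Handling the boundary component $\overline{U}^{pe_0(K)}\simeq\Z/p$ (item (c) of \autoref{units}) and checking it does not produce extra non-torsion generators is the delicate book-keeping step.
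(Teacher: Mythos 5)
Your skeleton for the case $n\geq 1$ is essentially the paper's: quote \autoref{image1} for the injectivity of $s_p$ and the computation $K(K;E_1,E_2)/p\simeq(\overline{U}^0\otimes^M\overline{U}^0)(K)\simeq\Z/p$, exhibit a generator with a torsion entry, and feed this into \autoref{torsionpointscriterion}. But the mechanism you propose for producing that generator is wrong. You say the wild-ramification hypothesis lets one ``replace one of the two arguments in each generating symbol by a $p^{n+1}$-torsion point,'' with the point $\tilde b$ satisfying $j(u)=p\tilde b$ as prototype. A $p^{n+1}$-torsion point is in general not $K$-rational (that is exactly what the nontriviality of $L=K(E_1[p^{n+1}],E_2[p^{n+1}])$ records), so it cannot occur in a symbol $\{a,b\}_{K/K}$; and $\tilde b$ is not a torsion point at all, since $p\tilde b=j(u)\neq 0$. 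The correct use of the hypothesis is: wild ramification of $K(E_1[p^{n+1}])/K$ supplies a $K$-rational point $w\in E_1[p^n]$ such that $K(\frac{1}{p}w)/K$ is wildly ramified; writing $w=(w_1,w_2)$ under $E_1(K)/p\simeq\overline{U}^0_K\oplus\overline{U}^{pe_0(K)}_K$, this forces $w_1\in\overline{U}^i_K\setminus\overline{U}^{i+1}_K$ with $i$ prime to $p$, and then Serre's corollary on norm groups produces a unit $y$ with $g_p(\{w_1,y\})\neq 0$, so $\{w,y\}_{K/K}$ generates $K(K;E_1,E_2)/p$ and \autoref{torsionpointscriterion} applies. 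Relatedly, your gloss that wild ramification ``is precisely what makes the Serre--Tate units genuine non-$p$th-powers'' conflates non-splitness with ramification: the Serre--Tate unit can be a non-$p$th power while $K(E_1[p],E_2[p])/K$ is unramified, and in that situation the symbol degenerates (\autoref{kernelord}); distinguishing wild ramification (filtration level prime to $p$, totally ramified $K(\sqrt[p]{u})/K$) from mere non-splitness is the whole point of the hypothesis.

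The second, larger gap is the case $n=0$. It is not ``the special case $N=0$'' of \autoref{torsionpointscriterion} or \autoref{vanishing}: that criterion presupposes $n\geq 1$ and, crucially, the injectivity of $s_p$, which is precisely what is \emph{not} available when $E_i[p]\not\subset E_i(K)$ for some $i$ (\autoref{Hiranouchiinjec} requires full rational $p$-torsion). The paper instead proves the stronger statement $(E_1\otimes^M E_2)/p(K)=0$ directly: tensor the Mackey sequences $0\to\hat{E}_i\to E_i\to[E_i/\hat{E}_i]\to 0$ with $\Z/p$, use \autoref{formalgroup} and the surjectivity of norms in unramified extensions (for units and for points of a good-reduction curve) to kill $\overline{U}^0\otimes^M[E_2/\hat{E}_2]/p$ and $[E_1/\hat{E}_1]/p\otimes^M E_2/p$, obtaining a surjection $(\overline{U}^0\otimes^M\overline{U}^0)(K)\twoheadrightarrow (E_1/p\otimes^M E_2/p)(K)$; then note that the source is $\Z/p$ and, because $K(\sqrt[p]{u})/K$ is totally ramified, it is generated by a symbol $\{u,b\}_{K/K}$ with $u$ the Serre--Tate parameter, which maps to zero since $u$ dies in $E_1(K)/p$. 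Your proposal contains no substitute for this vanishing argument, and without it neither the injectivity of $s_{p^m}$ nor the $p$-divisibility assertion for $n=0$ follows.
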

\begin{proof} 
 We first prove injectivity when $n\geq 1$, which implies $E_i[p]\subset E_i(K)$ for $i=1,2$. 
 Without loss of generality assume that $K(E_1[p^{n+1}])/K$ is wildly ramified. Then there exists a $p^n$-torsion point $w\in E_1[p^n]$ such that the extension $L_w=K(\frac{1}{p}w)$ is wildly ramified over $K$. We will show that the assumption of \autoref{torsionpointscriterion} holds, more precisely, that $ K(K;E_1,E_2)/p$ is generated by symbols of the form  $\{w,y\}_{K/K}$ with $y\in E_2(K)$. 

By \autoref{Hiranouchiinjec} we get that the Galois symbol $s_p$ is injective and can be computed by the following composition,   
		\[K(K;E_1,E_2)/p\simeq(\overline{U}^0\otimes^{M}\overline{U}^{0})(K)\stackrel{g_p}{\longrightarrow} Br(K)[p]\simeq\Z/p,\] where $g_p$ is the classical Galois symbol.  Moreover,  recall \eqref{decomposition} that we have a decomposition, $E_1(K)/p\simeq\overline{U}_K^0\oplus\overline{U}_K^{pe_0(K)}$. We consider the image of $w=(w_1,w_2)$ under this decomposition. Since $L_w/K$ is wildly ramified, we necessarily have $w_1\neq 0$ and even stronger, that $w_1\in\overline{U}_K^i\setminus\overline{U}_K^{i+1}$ for some $i$ coprime to $p$.  
	To prove the claim, it suffices therefore to show that there exists some $y\in \overline{U}^0(K)\subset E_2(K)/p$ such that $g_p(\{w_1,y\})\neq 0$.  Equivalently, it suffices to show that there exists a unit $y\in\overline{U}^0(K)$ such that $y\not\in N_{K(\sqrt[p]{w})/K}(K(\sqrt[p]{w})^\times)$. The existence of such a $y$ follows by \cite[p. 86, Corollary 7]{serre1979local}. 
	 \medskip
	 
 Now we prove injectivity when $n=0$. In this case, either $E_1[p]\not\subset E_1(K)$ or $E_2[p]\not\subset E_2(K)$.  We will show that $(E_1\otimes^M E_2)/p=0$, which in particular implies that the $K$-group $K(K;E_1,E_2)$ is $p$-divisible. 

We have that $\mu_p\subset K$, and for $i=1,2$, there are short exact sequences of $G_K$ modules 
\[0\rightarrow \mu_p\rightarrow E_i[p]\rightarrow \Z/p\rightarrow 0.\]
Without loss of generality, assume that $K(E_1[p])/K$ is wildly ramified. In particular,  the extension $0\rightarrow  \mu_p\rightarrow E_1[p]\rightarrow \Z/p\rightarrow 0$ does not split, and the corresponding Serre-Tate parameter $u$ has the property that $K(\sqrt[p]{u})/K$ is a totally ramified degree $p$ extension.

Recall that the category of Mackey functors is abelian with a tensor product. The short exact sequence of abelian groups \eqref{ses1} induces a short sequence of Mackey functors, 
\[0\rightarrow \hat{E}_i\rightarrow E_i\rightarrow [E_i/\hat{E}_i]\rightarrow 0,\] where  $[E_i/\hat{E}_i]$ is the Mackey functor defined as follows. For a finite extension $F/K$, denote the  residue field of $F$ by $k_F$, and let $[E_i/\hat{E}_i](F):=\overline{E}_i(k_F)$. Moreover, the restriction $\res_{F/K}:[E_i/\hat{E}_i](K)\rightarrow [E_i/\hat{E}_i](F)$ is the usual restriction, $\overline{E}_i(k)\stackrel{\res_{F/K}}{\longrightarrow} \overline{E}_i(k_F)$, while the norm $N_{F/K}:[E_i/\hat{E}_i](F)\rightarrow [E_i/\hat{E}_i](K)$ is the map $e(F/K)\cdot N_{F/K}:\overline{E}_i(k_F)\rightarrow \overline{E}_i(k)$. The fact that $[E_i/\hat{E}_i]$ is a Mackey functor has been shown by Raskind and Spiess (\cite[p. 15]{Raskind/Spiess2000}). We consider the sequence for $i=2$ and we apply the right exact functor, $\otimes\Z/p$. Using \autoref{formalgroup}, we obtain an exact sequence of Mackey functors, 
\[\overline{U}^0\otimes^M\overline{U}^0\stackrel{j}{\longrightarrow}\overline{U}^0\otimes^M E_2/p\stackrel{r}{\longrightarrow}\overline{U}^0\otimes^M[E_2/\hat{E}_2]/p\rightarrow 0.\] 
We claim that $\overline{U}^0\otimes^M[E_2/\hat{E}_2]/p=0$. Indeed, consider a symbol  $\{x,y\}_{F/K}$, where $F$ is some finite extension of $K$, $x\in\overline{U}^0(F)$, and $y\in[E_2/\hat{E}_2](F)$. There is  $y'\in[E_2/\hat{E}_2](\overline{F})$ such that $py'=y$; more precisely, for some finite unramified extension $F'/F$, we can find $y'\in[E_2/\hat{E}_2](F')$ such that $py'=y$. But, since $F'/F$ is unramified, the norm map $N_{F'/F}:\overline{U}^0(F')\rightarrow\overline{U}^0(F)$ is surjective (\cite[p. 81, Proposition 1]{serre1979local}), so the claim follows. 
We conclude that there is an exact sequence $\overline{U}^0\otimes^M\overline{U}^0\stackrel{j}{\longrightarrow}\overline{U}^0\otimes^M E_2/p\rightarrow 0$. 

Using a similar argument, we obtain an exact sequence of Mackey functors, 
\[\overline{U}^0\otimes^M E_2/p\stackrel{j}{\longrightarrow}E_1/p\otimes^M E_2/p\stackrel{r}{\longrightarrow}
[E_1/\hat{E}_1]/p\otimes^M E_2/p\rightarrow 0.\]  We can again conclude that $[E_1/\hat{E}_1]/p\otimes^M E_2/p=0$, because the elliptic curve $E_2$ has good reduction, and hence for every finite unramified extension $F'/F$, the norm map $N_{F'/F}:E_2(F')\rightarrow E_2(F)$ is surjective (\cite[Corollary 4.4]{Mazur1972}). Finally, the two exact sequences induce a surjection, 
\[\overline{U}^0\otimes^M\overline{U}^0\stackrel{j}{\longrightarrow} E_1/p\otimes^M E_2/p\rightarrow 0.\] 
Evaluating at $\Spec(K)$, we get a surjection, 
\[(\overline{U}^0\otimes^M\overline{U}^0)(K)\stackrel{j}{\longrightarrow} (E_1/p\otimes^M E_2/p)(K)\rightarrow 0.\] 
The group $(\overline{U}^0\otimes^M\overline{U}^0)(K)$ is isomorphic via the classical Galois symbol to $\Z/p$ (\cite[Lemma 3.3]{Hiranouchi2014}). It suffices therefore to show that some non-zero element of $(\overline{U}^0\otimes^M\overline{U}^0)(K)$ is mapped to zero under $j$. But this now is easy, using the description of the Serre-Tate parameter $u$ described in the beginning of this section. Namely, by our assumption, the extension $K(\sqrt[p]{u})$ is totally ramified of degree $p$ over $K$, and hence there exists a unit $b\in\overline{U}^0(K)$ such that $g_p(\{u,b\}_{K/K})\neq 0$. Thus we get a generator $\{u,b\}_{K/K}$ of 
$(\overline{U}^0\otimes^M\overline{U}^0)(K)$ which is clearly mapped to zero under $j$.

\end{proof}

\vspace{1pt}
\subsection*{Possible Kernel in the unramified Case} We will now show that if the assumption of \autoref{ordord} does not hold, the Mackey functor Galois symbol has a nontrivial kernel. We emphasize that this does not disprove  \autoref{conjecture}, since the group $(E_1/p\otimes^M E_2/p)(K)$ could in general be larger than the Somekawa $K$-group $K(K;E_1,E_2)/p$.
\begin{prop}\label{kernelord} Let $E_1, E_2$ be elliptic curves over $K$ with good ordinary reduction. We assume that $\mu_p\subset K$ and the $G_K$ modules $E_i[p]$ fit into short exact sequences,
\[0\rightarrow\mu_p\rightarrow E_i[p]\rightarrow
\Z/p\rightarrow 0.\] 
Suppose that the extension $K(E_1[p],E_2[p])$ is  nontrivial and  unramified over $K$. Then the Galois symbol $s_p$ vanishes, while $(E_1/p\otimes^M E_2/p)(K)\simeq\Z/p$. In particular, the Mackey functor  $E_1/p\otimes^M E_2/p$ is isomorphic to $\overline{U}^0\otimes^M\overline{U}^0$. 
\end{prop}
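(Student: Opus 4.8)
The plan is to push through the argument used for the case $n=0$ of \autoref{ordord}, and then pinpoint where the unramifiedness of $K(E_1[p],E_2[p])$ is decisive. Since $E_i[p]$ sits in $0\to\mu_p\to E_i[p]\to\Z/p\to0$, the connected part $E_i[p]^\circ=\hat E_i[p]$ is $\mu_p$; in particular $\hat E_i[p]\cong\mu_p\subset\hat E_i(\mathcal O_K)$ (as $\mu_p\subset K$), so \autoref{formalgroup} applies and gives $\hat E_i/[p]\cong\overline U^1\cong\overline U^0$. Tensoring the Mackey-functor sequences $0\to\hat E_i\to E_i\to[E_i/\hat E_i]\to0$ with $\Z/p$ and using that $\overline U^0\otimes^M[E_2/\hat E_2]/p$ and $[E_1/\hat E_1]/p\otimes^M E_2/p$ vanish — which holds because the norm is surjective along unramified extensions, exactly as in the proof of \autoref{ordord} — produces a surjection of Mackey functors $\Phi\colon\overline U^0\otimes^M\overline U^0\twoheadrightarrow E_1/p\otimes^M E_2/p$. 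As $(\overline U^0\otimes^M\overline U^0)(K)\cong\Z/p$ via the classical symbol $g_p$ (\cite[Lemma 3.3]{Hiranouchi2014}), the group $(E_1/p\otimes^M E_2/p)(K)$ is $0$ or $\Z/p$, and the proposition reduces to showing it is $\Z/p$ and that $s_p=0$.

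For $s_p=0$: every element of $(E_1/p\otimes^M E_2/p)(K)$ is, via $\Phi$, a sum of symbols $\{j_1(a),j_2(b)\}_{F/K}$ with $a,b$ in the formal groups. For such $a$, the Kummer class of $j_i(a)$ in $H^1(F,E_i[p])$ is the image under $\iota_{i,\star}$ of a class in $H^1(F,\mu_p)=H^1(F,E_i[p]^\circ)$, where $\iota_i\colon E_i[p]^\circ\hookrightarrow E_i[p]$ is the canonical $G_K$-equivariant inclusion. Using that the map induced on $H^2$ by $\iota_1\otimes\iota_2$ commutes with corestriction, one gets
\[
s_p\big(\{j_1(a),j_2(b)\}_{F/K}\big)\in\img\Big((\iota_1\otimes\iota_2)_\star\colon H^2(K,\mu_p^{\otimes2})\to H^2(K,E_1[p]\otimes E_2[p])\Big),
\]
so it is enough that $(\iota_1\otimes\iota_2)_\star=0$. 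By local Tate duality this map is dual to $\Hom_{G_K}(E_1[p],E_2[p])\to\Hom(\mu_p^{\otimes2},\mu_p)=\Z/p$ sending $\phi$ to the restriction of the pairing $(a,b)\mapsto e_{E_2}(\phi(a),b)$ to $E_1[p]^\circ\times E_2[p]^\circ$, where $e_{E_2}$ is the Weil pairing. Because $K(E_1[p],E_2[p])/K$ is nontrivial, at least one $E_i[p]\not\subset E_i(K)$, and a brief computation with the $G_K$-action (for $E_i[p]\not\subset E_i(K)$ one has $E_i[p]^\circ=E_i[p]^{G_K}$, and if $E_2[p]\subset E_2(K)$ every $\phi$ annihilates $E_1[p]^\circ$) shows $\phi(E_1[p]^\circ)\subseteq E_2[p]^\circ$ in all cases. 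Since $E_2[p]^\circ$ is one-dimensional and $e_{E_2}$ is alternating, this restricted pairing vanishes; hence $(\iota_1\otimes\iota_2)_\star=0$ and $s_p=0$.

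For $(E_1/p\otimes^M E_2/p)(K)\cong\Z/p$ it suffices to show $\Phi$ is injective. Its kernel is a sub-Mackey functor of $\overline U^0\otimes^M\overline U^0$ which, by right exactness of $\otimes^M$ applied to the connected–étale sequences, is generated by symbols $\{v,w\}_{F/K}$ one of whose entries is a power of a Serre–Tate parameter $u_i$; here $\ker(\hat E_i/[p]\to E_i/p)$ is the subfunctor generated by $u_i$, via the construction of $u_i$ recalled before \autoref{formalgroup}. Now the hypothesis enters: since $K(E_i[p])=K(\sqrt[p]{u_i})$ is unramified over $K$, for every finite $F/K$ the extension $F(\sqrt[p]{u_i})/F$ is unramified, so its norm subgroup contains $\mathcal O_F^\times$; by the Milnor symbol criterion, $g_p(\{v,w\})=0$ over $F$ for every unit $w$. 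Transporting through $(\overline U^0\otimes^M\overline U^0)(F)\cong\mathrm{Br}(F)[p]$, under which $N_{F/K}$ becomes corestriction (an isomorphism on $p$-torsion), each such symbol maps to $0$ in $(\overline U^0\otimes^M\overline U^0)(K)$. As the value at $K$ of a sum of subfunctors is the sum of their values, $(\ker\Phi)(K)=0$. Running the same argument over every finite extension $L/K$ — the hypotheses descend, and when $L(E_1[p],E_2[p])/L$ is trivial one invokes \autoref{image1} instead — gives $\ker\Phi=0$, so $\Phi$ is an isomorphism of Mackey functors; in particular $E_1/p\otimes^M E_2/p\cong\overline U^0\otimes^M\overline U^0$ and $(E_1/p\otimes^M E_2/p)(K)\cong\Z/p$.

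The main obstacle is the last step: identifying $\ker\Phi$ precisely — in particular controlling the Tor terms that appear when the connected–étale sequences are tensored with $\Z/p$ — and matching the Mackey-functor structure (norms and restrictions) on $\overline U^0\otimes^M\overline U^0$ with corestriction and restriction on Brauer groups, so that the vanishing of each generating symbol at the base field really forces $(\ker\Phi)(K)=0$. The resulting identification $E_1/p\otimes^M E_2/p\cong\overline U^0\otimes^M\overline U^0$ records that, in the unramified case, the projection-formula relations among $K$-rational points already exhaust the Mackey product, so that the class detected by $s_p$ over $K(E_1[p],E_2[p])$ disappears after pushing down to $K$.
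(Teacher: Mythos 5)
Your structural half is the paper's own argument: the paper also passes through the connected--\'{e}tale sequences, kills $\overline{U}^0\otimes^M[E_2/\hat{E}_2]/p$ and $[E_1/\hat{E}_1]/p\otimes^M E_2/p$ by norm surjectivity along unramified extensions, and then argues that the kernel of $\overline{U}^0\otimes^M\overline{U}^0\twoheadrightarrow E_1/p\otimes^M E_2/p$ dies because the subfunctors $\langle u_i\rangle$ generated by the Serre--Tate parameters satisfy $\langle u_i\rangle\otimes^M\overline{U}^0=0$ when $K(\sqrt[p]{u_i})/K$ is unramified; the step you flag as the ``main obstacle'' (identifying $\ker\Phi$ exactly, given only right-exactness of $\otimes^M$) is treated at the same level of detail in the paper, so you are at parity there, and your way of killing the generators (norm criterion for $g_p$ plus Hiranouchi's isomorphism with $Br(F)[p]$) is equivalent to the paper's projection-formula argument. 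Where you genuinely diverge is the vanishing of $s_p$: the paper invokes the duality theorem of \cite{Gazaki2017} (Theorem 1.1 together with Proposition 8.8), which identifies the orthogonal complement of $\img(s_p)$ under local Tate duality with $\{f\in\Hom_{G_K}(E_1[p],E_2[p]):f(E_1[p]^\circ)\subset E_2[p]^\circ\}$, and then checks by the matrix computation that every $G_K$-homomorphism lies in this subgroup. You instead exploit the surjectivity of $\Phi$ (already needed for the structure statement) to see that $\img(s_p)$ lands in the image of $(\iota_1\otimes\iota_2)_\star\colon H^2(K,\mu_p^{\otimes 2})\rightarrow H^2(K,E_1[p]\otimes E_2[p])$, since Kummer classes of formal-group points factor through $H^1(\cdot,E_i[p]^\circ)$ and corestriction is functorial in coefficients, and you then show $(\iota_1\otimes\iota_2)_\star=0$ by dualizing to the restriction of invariant pairings, using the same Galois-action computation ($\phi(E_1[p]^\circ)\subset E_2[p]^\circ$ once the extension is nontrivial) plus the alternating property of the Weil pairing on the one-dimensional $E_2[p]^\circ$. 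This alternative is correct and buys self-containedness --- it replaces the external integral-model duality input by elementary functoriality of local Tate duality --- at the cost of relying on the generation of the Mackey product by formal-group symbols, whereas the paper's route computes $\img(s_p)^\perp$ exactly and would also apply when that generation statement is unavailable.
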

\begin{proof} First we show that $(E_1/p\otimes^M E_2/p)(K)\simeq\Z/p$. This follows similarly to the proof of  \autoref{ordord}. Without loss of generality, we assume that the extension $K(E_1[p])$ is nontrivial and unramified over $K$. We have an exact sequence of Mackey functors, 
\[0\rightarrow\langle u \rangle\rightarrow\overline{U}^0\stackrel{j}{\longrightarrow} E_1/p\rightarrow[E_1/\hat{E}_1]/p\rightarrow 0,\] where $u$ is the Serre-Tate parameter of $E_1$. Here we denoted by $\langle u \rangle$ the Mackey sub-functor of $\overline{U}^0$ generated by $u$. Note that for a finite extension $L$ of $K$, $\langle u \rangle(L)$ is either $0$ or $\Z/p$ depending on whether $u$ is a $p$th power in $L$ or not.  When we apply $\otimes^M\overline{U}^0$
 to the above sequence, the Mackey functor $\langle u \rangle\otimes^M \overline{U}^0$ vanishes. For, if $L/K$ is a finite extension such that $u\not\in L^{\times p}$, then the extension $L(\sqrt[p]{u})$ is unramified over $L$ and therefore the norm map $\overline{U}^0_L\rightarrow\overline{U}^0_K$ is surjective. We  conclude that in this case the surjection 
$\overline{U}^0\otimes^M\overline{U}^0\stackrel{j}{\longrightarrow} E_1/p\otimes^M E_2/p\rightarrow 0$ is an isomorphism, and hence $(E_1/p\otimes^M E_2/p)(K)\simeq\Z/p$. 

Next we show that $s_p=0$. To show this, we use local Tate duality. The local  Tate duality pairing for the finite $G_K$-module $E_1[p]\otimes E_2[p]$ and for $i=0,1,2$ is a perfect pairing,  
\[\langle\cdot,\cdot\rangle:H^i(K,E_1[p]\otimes E_2[p])\times H^{2-i}(K,\Hom(E_1[p]\otimes E_2[p],\mu_p))\rightarrow\Z/p.\] Using the Weil pairing and the fact that elliptic curves are self dual abelian varieties, the above pairing  for $i=2$ becomes, 
\[\langle\cdot,\cdot\rangle:H^2(K,E_1[p]\otimes E_2[p])\times \Hom_{G_K}(E_1[p],E_2[p])\rightarrow\Z/p.\]
According to the main theorem of \cite[Theorem 1.1]{Gazaki2018}, the orthogonal complement under $\langle\cdot,\cdot\rangle$ of the image of $s_p$ consists precisely of those homomorphisms $f:E_1[p]\rightarrow E_2[p]$ that extend to a homomorphism $\tilde{f}:\mathcal{E}_1[p]\rightarrow\mathcal{E}_2[p]$ of finite flat group schemes over $\Spec(\mathcal{O}_K)$, where $\mathcal{E}_i$ is the N\'{e}ron model of $E_i$ for $i=1,2$. Since both elliptic curves have ordinary reduction, the above subgroup of $\Hom_{G_K}(E_1[p],E_2[p])$ has a simpler description. Namely, according to \cite[Proposition 8.8]{Gazaki2018}, the orthogonal complement of $\img(s_p)$ is the subgroup 
\[H=\{f\in\Hom_{G_K}(E_1[p],E_2[p]): f(E_1[p]^\circ)\subset E_2[p]^\circ\}.\] We will show that every $G_K$-homomorphism $f:E_1[p]\rightarrow E_2[p]$ lies in $H$, which will imply that $s_p=0$. By the assumption of the proposition, $E_i[p]^\circ\simeq\mu_p$, for $i=1,2$, and the $G_K$ action on $E_i[p]$ is upper triangular of the form $\left(\begin{array}{cc}
1 & \alpha_i(\sigma)\\
0 & 1
\end{array}\right)$, for $\sigma\in G_K$, where $\alpha_i:G_K\rightarrow\Hom(\Z/p,\mu_p)$. Since we assumed that the extension $K(E_1[p],E_2[p])$ is nontrivial, at least one of the two characters $\alpha_i$ is nonzero of order exactly $p$. 

Let $f:E_1[p]\rightarrow E_2[p]$ be a $G_K$-homomorphism. After we consider splittings as abelian groups (and not as $G_K$-modules), $E_i[p]\simeq\mu_p\oplus\Z/p$, we can write $f$ in a matrix form $f=\left(\begin{array}{cc}
f_1 & f_2\\
f_3 & f_4
\end{array}\right)$. We want to show that the function $f_3:E_1[p]^\circ\xrightarrow{f} E_2[p]\rightarrow E_2[p]^{et}$ vanishes. This follows by the equality of matrices,
\[\left(\begin{array}{cc}
f_1 & f_2\\
f_3 & f_4
\end{array}\right)\left(\begin{array}{cc}
1 & \alpha_1(\sigma)\\
0 & 1
\end{array}\right)=\left(\begin{array}{cc}
1 & \alpha_2(\sigma)\\
0 & 1
\end{array}\right)\left(\begin{array}{cc}
f_1 & f_2\\
f_3 & f_4
\end{array}\right),\] which yields $\alpha_2(\sigma) f_3=f_3\alpha_1(\sigma)=0$, for every $\sigma\in G_K$. 

\end{proof}

\begin{cor}\label{tower1} \autoref{mainintro2} holds for a product $X=E_1\times E_2$ of elliptic curves with good ordinary reduction. 
\end{cor}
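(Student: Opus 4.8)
The plan is to derive the statement from \autoref{ordord} together with Yamazaki's reduction of the cycle map to the Somekawa symbol, the only real work being the construction of a suitable finite extension $L/K$. First I would record the reduction: by the Raskind--Spiess filtration of $CH_0(X_L)$ and \cite[Proposition 2.4]{Yamazaki2005}, for any finite $L/K$ the cycle map $CH_0(X_L)/p^m\to H^{4}_{\text{\'{e}t}}(X_L,\mu_{p^m}^{\otimes 2})$ is injective for every $m\geq 1$ as soon as the Galois symbols attached to all subproducts of $\{E_1,E_2\}$ are injective for all $m$; the pieces with $r\leq 1$ are the reduction map $\Z/p^m\to\Z/p^m$ and the Kummer maps $E_i(L)/p^m\hookrightarrow H^1(L,E_i[p^m])$, which are automatically injective, so everything comes down to the symbol $s_{p^m}\colon K(L;E_1,E_2)/p^m\to H^2(L,E_1[p^m]\otimes E_2[p^m])$. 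By \autoref{kernelord} one cannot hope to work over $K$ itself --- over the ``wrong'' field the symbol genuinely acquires a kernel coming from an unramified situation --- so the task is to pass to an $L$ over which the field generated by the relevant torsion is \emph{wildly} ramified.

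Next I would build $L$ as follows. Pass first to the tame extension $K(\mu_p)/K$ and let $c_0$ be the largest integer with $\mu_{p^{c_0}}\subseteq K(\mu_p)$. Since each $\hat E_i$ has height one, $E_i[p^{c_0+1}]^{\circ}=\hat E_i[p^{c_0+1}]$ becomes isomorphic to $\mu_{p^{c_0+1}}$ over a finite unramified extension, and $E_i[p^{c_0+1}]^{\text{et}}$ becomes constant after a finite unramified extension of the residue field; let $L_0$ be a finite unramified extension of $K(\mu_p)$ over which all four of these twists (for $i=1,2$) are killed. Then $L_0$, and every further unramified extension of it, still has cyclotomic level $c_0$, and the connected--\'etale sequences provide extensions of Galois modules $0\to\mu_{p^{n+1}}\to E_i[p^{n+1}]\to\Z/p^{n+1}\to 0$ for every $n+1\leq c_0+1$, hence at whatever level ultimately occurs. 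Now iterate from $L:=L_0$. Let $n$ be the largest integer with $E_i[p^n]\subseteq E_i(L)$ for $i=1,2$; the Weil pairing forces $n\leq c_0$, and because $\mu_p\subseteq L$ the extension $L':=L(E_1[p^{n+1}],E_2[p^{n+1}])/L$ has $p$-power degree (the cyclotomic and unipotent contributions to the Galois action are both pro-$p$ once $\mu_p\subseteq L$ and the \'etale parts are untwisted). If $L'/L$ is ramified, it is automatically wildly ramified, so all three hypotheses of \autoref{ordord} hold over $L$ and we stop; this covers in particular the case $n=c_0$, since then $\mu_{p^{n+1}}\not\subseteq L$, so $L(\mu_{p^{n+1}})/L\subseteq L'/L$ is a nontrivial --- hence wildly ramified --- layer of the cyclotomic $\Z_p$-extension. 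If $L'/L$ is unramified, replace $L$ by $L'$: then $E_i[p^{n+1}]\subseteq E_i(L')$ for both $i$, so the torsion level has strictly increased while the cyclotomic level is still $c_0$, and we return to the start.

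Since the torsion level is nondecreasing, bounded above by the fixed integer $c_0$, and strictly increases each time the loop is re-entered, the procedure halts at a finite extension $L/K$ over which \autoref{ordord} applies; combined with the first paragraph, this proves \autoref{mainintro2} for $X=E_1\times E_2$. I expect the only genuinely delicate point to be this bookkeeping: the torsion level, the cyclotomic level, and the ramification of $L(E_i[p^{n+1}])/L$ are entangled, and one must ensure that the attempt to force wild ramification does not degenerate into an infinite ``unramified chase.'' The two facts that prevent this are that unramified extensions leave the cyclotomic level untouched while the torsion level never exceeds it, and that --- once $\mu_p$ lies in the base field --- a ramified extension generated by $p$-power torsion is necessarily wildly ramified; together they pin the two levels to a common value after finitely many steps, which is exactly the wild-ramification hypothesis of \autoref{ordord}.
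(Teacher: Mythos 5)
Your overall route is the same as the paper's: reduce via Yamazaki to the Somekawa symbol for the pair $(E_1,E_2)$, then iterate the dichotomy ``adjoin the $p^{n+1}$-torsion; if the resulting extension is ramified it has $p$-power degree, hence is wildly ramified and \autoref{ordord} applies; if it is unramified, extend and repeat.'' The gap is in the termination step, which is exactly the point you flag as delicate. Your halting argument rests on the claim that an unramified extension of a $p$-adic field cannot raise the $p$-cyclotomic level (and, in the case $n=c_0$, on the claim that a nontrivial layer of the $p$-power cyclotomic tower is automatically wildly ramified). Both claims are false in general. For example, let $F=\Q_p(\mu_{p^{c}})$, let $F_1=F(\mu_{p^{c+1}})$ (totally ramified of degree $p$) and let $F_2/F$ be unramified of degree $p$; if $L$ is one of the ``diagonal'' degree-$p$ subextensions of $F_1F_2/F$ different from $F_1$ and $F_2$, then $\mu_{p^{c}}\subset L$, $\mu_{p^{c+1}}\not\subset L$, yet $L(\mu_{p^{c+1}})=F_1F_2$ is unramified over $L$. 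So already $L_0$ may have cyclotomic level larger than $c_0$, the level can jump each time you replace $L$ by $L'$, the bound ``torsion level $\leq c_0$'' is unjustified, and the loop is not obviously finite as written; likewise, when $n=c_0$ you cannot conclude wild ramification of $L'/L$ merely because $L(\mu_{p^{n+1}})/L$ is nontrivial.

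The repair, which is in effect what the paper's proof invokes, is the correct finiteness statement: any unramified (indeed any tamely ramified) extension of a fixed $p$-adic field contains only a bounded amount of $p$-power roots of unity, because $\Q_p(\mu_{p^m})/\Q_p$ has ramification index divisible by $p^{m-1}$ while tame extensions do not change the $p$-part of the absolute ramification index; equivalently, the image of inertia under the cyclotomic character is open in $\Z_p^\times$. Every field produced by your unramified chase lies in the maximal unramified extension of $L_0$, and each pass through the loop forces $\mu_{p^{n+1}}$ into the field via the Weil pairing, so the torsion level is bounded by this finite constant and the process stops; at the terminal field the next torsion layer generates a ramified extension of $p$-power degree, hence wildly ramified, and \autoref{ordord} applies (you should also allow one more unramified untwisting of the connected--\'etale sequences at the terminal level, since your once-and-for-all untwisting at level $c_0+1$ relied on the same faulty bound). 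With this correction your argument coincides with the paper's proof.
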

\begin{proof} As usual, let $n\geq 0$ be the largest integer such that $E_i[p^n]\subset E_i(K)$ for $i=1,2$. By extending to a finite extension if necessary, we may assume that $\mu_p\subset K$ and for $i=1,2$ we have short exact sequences of $G_K$-modules, 
\[0\rightarrow\mu_{p^{n+1}}\rightarrow E_{i}[p^{n+1}]\rightarrow\Z/p^{n+1}\rightarrow 0.\] The only case when we need to extend the base field is when the extension \[L_1:=K(E_1[p^{n+1}],E_2[p^{n+1}])\]  is unramified over $K$. After extending to $L_1$ we examine whether the extension $L_2:=K(E_1[p^{n+2}],E_2[p^{n+2}])$ has wild ramification over $L_1$. After repeating this process finitely many times, we get 
	an extension $L_{r+1}/L_r$, for some $r\geq 1$, that has wild ramification. Indeed, 
	there is a largest integer $N>n$ such that $\mu_{p^N}\subset K$, so $L_{r+1}/K$ has wild ramification for some $r\geq 1$. Choosing $r$ the smallest with that property, we have that $L_{r+1}/L_r$ has wild ramification and injectivity holds over $L_r$. 

\end{proof}

\subsection*{Structural Results} 
We next consider the Albanese kernel, $T(E_1\times E_2)$. 
Recall that by the work of Raskind and Spiess (\cite{Raskind/Spiess2000}) we have an isomorphism, \[T(E_1\times E_2)\simeq K(K;E_1,E_2)\simeq T(E_1\times E_2)_{\dv}\oplus (\text{finite}),\] where we denoted by $T(E_1\times E_2)_{\dv}$ the maximal divisible subgroup of the Albanese kernel. \autoref{ordord} and \autoref{kernelord} allow us in most cases to fully determine the finite summand of $T(E_1\times E_2)$. 



\begin{cor}\label{structure1} Let $X=E_1\times E_2$ be the product of two elliptic curves over $K$ with good ordinary reduction. Let $n\geq 0$ be the largest nonnegative integer such that $E_i[p^n]\subset E_i(K)$ for $i=1,2$. Assume that the extension $K(E_1[p^{n+1}],E_2[p^{n+1}])$ has wild ramification.  
Then we have an isomorphism for the Albanese kernel,
\[T(X)\simeq
\left\{
        \begin{array}{ll}
      T(X)_{\dv}\oplus\Z/p^n, & \text{if }n\geq 1 \\
     T(X)_{\dv}, & \text{if } n=0.
        \end{array}
      \right.
\] 
\end{cor}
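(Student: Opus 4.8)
The plan is to read \autoref{structure1} off from the isomorphism $T(X)\cong K(K;E_1,E_2)$ of Raskind--Spiess recalled just above, combined with three facts already assembled: the decomposition $T(X)\cong T(X)_{\dv}\oplus F$ into its maximal divisible subgroup and a finite complement $F$; the computation $K(K;E_1,E_2)/p^n\cong\Z/p^n$ of \autoref{image1} (valid because $E_1,E_2$ both have good ordinary, hence the \emph{same}, reduction type, and $n$ is the largest integer with $E_i[p^n]\subset E_i(K)$); and the $p$-divisibility of $p^nK(K;E_1,E_2)$ from \autoref{torsionpointscriterion}. As a preliminary observation, the coprime-to-$p$ vanishing theorem of Raskind--Spiess applies to the two good-reduction curves $E_1,E_2$, so $K(K;E_1,E_2)$ is $\ell$-divisible for every prime $\ell\neq p$; hence $F$ must be a finite abelian $p$-group, and since $T(X)_{\dv}$ is divisible we have $T(X)/p^m\cong F/p^mF$ for all $m\geq 1$.

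Suppose first $n\geq 1$. Then $E_i[p]\subset E_i(K)$, so $\mu_p\subset K$ by the Weil pairing, and the standing hypotheses of \autoref{ordord} and \autoref{image1} are in force (the nonsplit sequences $0\to\mu_{p^{n+1}}\to E_i[p^{n+1}]\to\Z/p^{n+1}\to 0$ being part of the hypotheses after the harmless normalization used in the proof of \autoref{tower1}, which does not change $n$). By \autoref{image1}, $F/p^nF\cong T(X)/p^n\cong\Z/p^n$; for a finite abelian $p$-group this forces $F\cong\Z/p^a$ with $a\geq n$. On the other hand, \autoref{ordord} gives that $s_p$ is injective and, as shown in its proof, that $K(K;E_1,E_2)/p$ is generated by symbols $\{w,y\}_{K/K}$ with $w\in E_1[p^n]$; thus the hypotheses of \autoref{torsionpointscriterion} hold and $p^nK(K;E_1,E_2)$ is $p$-divisible. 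Since $p^nT(X)=T(X)_{\dv}\oplus p^nF$, and a direct sum of a $p$-divisible group with a finite $p$-group is $p$-divisible only when the latter is trivial, we get $p^nF=0$, i.e.\ $a\leq n$. Hence $a=n$ and $T(X)\cong T(X)_{\dv}\oplus\Z/p^n$.

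When $n=0$ the argument collapses. \autoref{ordord} gives directly that $K(K;E_1,E_2)$ is $p$-divisible; here one may first replace $K$ by an extension of degree prime to $p$ (adjoining $\mu_p$ and killing the unramified twists on the connected and \'etale parts of $E_i[p]$) so that the hypotheses of \autoref{ordord} hold --- the wild-ramification hypothesis on $K(E_1[p],E_2[p])$ survives such an extension, so $n$ remains $0$, and $p$-divisibility descends through $N\circ\res$ (multiplication by the degree). Then $T(X)/p=0$, and since $F$ is a finite $p$-group this forces $F=0$, so $T(X)=T(X)_{\dv}$.

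I do not expect a genuinely hard step: the content is organizational, every ingredient having been proved earlier. The points that need care are (i) confirming that the generation statement buried in the proof of \autoref{ordord} is exactly the input \autoref{torsionpointscriterion} requires; (ii) verifying that the finite summand $F$ carries no prime-to-$p$ part, which is where the coprime-to-$p$ Raskind--Spiess theorem is used; and (iii) managing the auxiliary hypotheses ($\mu_p\subset K$, the nonsplit connected--\'etale sequences for $E_i[p^{n+1}]$), either holding over $K$ automatically when $n\geq 1$ or arranged by a prime-to-$p$ extension in the $n=0$ case without disturbing $n$ or the relevant divisibility of $T(X)$.
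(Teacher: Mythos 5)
Your overall skeleton (combine \autoref{image1} over $K$ with the $p$-divisibility of $p^nK(K;E_1,E_2)$ from \autoref{ordord}/\autoref{torsionpointscriterion}, plus prime-to-$p$ divisibility to see the finite summand is a $p$-group) is the same as the paper's, and the $n=0$ case is handled correctly. But in the $n\geq 1$ case there is a genuine gap: the hypotheses of \autoref{ordord} are \emph{not} ``in force'' over $K$. The assumption $E_i[p^n]\subset E_i(K)$ only gives $\mu_{p^n}\subset K$; at level $p^{n+1}$ the connected and \'etale pieces of $E_i[p^{n+1}]$ are in general nontrivial \emph{unramified} twists of $\mu_{p^{n+1}}$ and $\Z/p^{n+1}$, and the twist characters take values in the kernel of $(\Z/p^{n+1})^\times\to(\Z/p^{n})^\times$, a group of order $p$. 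So the ``harmless normalization'' you borrow from \autoref{tower1} is a base change to a finite unramified extension $L/K$ whose degree may be divisible by $p$. After that base change, \autoref{ordord} and the generation statement you feed into \autoref{torsionpointscriterion} hold over $L$, not over $K$: they give that $p^nK(L;E_{1,L},E_{2,L})$ is $p$-divisible, whereas your conclusion $p^nF=0$ concerns $T(X)\simeq K(K;E_1,E_2)$ over $K$. Your proposal never descends this statement, and the restriction--corestriction trick you use when $n=0$ (multiplication by a prime-to-$p$ degree) is unavailable here precisely because $[L:K]$ can be a power of $p$.

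This descent is the actual content of the paper's proof: since $L/K$ is unramified and $E_1,E_2$ have good reduction, the norm $N_{L/K}:E_i(L)\to E_i(K)$ is surjective (Mazur), and the projection formula then makes $N_{L/K}:K(L;E_{1,L},E_{2,L})\to K(K;E_1,E_2)$ surjective; applying $N_{L/K}$ to the $p$-divisible group $p^nK(L;E_{1,L},E_{2,L})$ yields that $p^nK(K;E_1,E_2)=p^sK(K;E_1,E_2)$ for all $s>n$, which combined with $K(K;E_1,E_2)/p^n\simeq\Z/p^n$ (valid over $K$ by \autoref{image1}) pins down the finite summand as $\Z/p^n$. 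Your argument becomes correct once you insert this norm-surjectivity descent (and note, as the paper implicitly does, that the wild-ramification hypothesis and the integer $n$ are preserved under the unramified base change, since $K(E_1[p^{n+1}],E_2[p^{n+1}])$ cannot sit inside an unramified extension); without it, the step ``$p^nK(K;E_1,E_2)$ is $p$-divisible'' is unjustified except in the special case where the two exact sequences already have the required form over $K$.
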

\begin{proof} \underline{Case 1:} Assume that $n\geq 1$. 
We consider first the special case when for $i=1,2$ we have short exact sequences of $G_K$-modules, \[0\rightarrow \mu_{p^{n+1}}\rightarrow E_i[p^{n+1}]\rightarrow\Z/p^{n+1}\rightarrow 0.\] In this case the corollary follows directly from \autoref{ordord} and \autoref{image1}, since we already know by the computations of Hiranouchi (\autoref{image1}) that \[K(K;E_1, E_2)/p^n\simeq\Z/p^n.\] 

For the general case, let $L/K$ be the smallest finite extension such that the special case holds for the product $E_{1,L}\times E_{2,L}$. It suffices to show that for every $m\geq 1$ the norm map, \[N_{L/K}:K(L;E_{1,L}, E_{2,L})/p^m\rightarrow K(K;E_1,E_2)/p^m\] is surjective. For, the surjectivity of the norm together with \autoref{ordord} will imply that for every $s>n$ the $K$-group $p^s K(K;E_1,E_2)$ is $p$-divisible. At the same time \autoref{image1} gives us an isomorphism, $K(K;E_1,E_2)/p^n\simeq\Z/p^n$, and hence the norm will in fact be an isomorphism, from which the claim follows. 

The surjectivity of the norm follows easily, since $L/K$ is a finite unramified extension. For such extensions, the norm map $N_{L/K}:E_1(L)\rightarrow E_1(K)$ is surjective \cite[Corollary 4.4]{Mazur1972}. Using the projection formula \eqref{projectionformula} of the Somekawa $K$-group, we can easily show that $N_{L/K}:K(L;E_{1,L},E_{2,L})\rightarrow K(K;E_1,E_2)$ is also surjective.  


\underline{Case 2:} Assume that $n=0$. 
Let $L/K$ be the smallest finite extension such that $\mu_p\subset L$ and the $G_L$-modules $E_i[p]$ fit into short exact sequences,
\begin{eqnarray}\label{ses11}0\rightarrow\mu_p\rightarrow E_i[p]\rightarrow\Z/p\rightarrow 0.\end{eqnarray}
The assumption of \autoref{structure1} implies that for at least one $i\in\{1,2\}$ the sequence \eqref{ses11} does not split. More importantly, it corresponds to a non-trivial Serre-Tate parameter $u\in\mathcal{O}^\times_K/\mathcal{O}_K^{\times p}$ which is such that the extension $L(\sqrt[p]{u})/L$ is totally ramified. In this case, the argument is exactly the same as when $n\geq 1$. Namely, \autoref{ordord} gives us that the $K$-group $K(L;E_{1,L}, E_{2,L})$ is $p$-divisible. The general case follows again by the surjectivity of the norm map, \[K(L;E_{1,L}, E_{2,L})/p\xrightarrow{N_{L/K}} K(K;E_1,E_2)/p.\] Note that in this case we have a tower, $K\subset L_0\subset L$ with $L_0/K$ unramified and $L$ is at most $L_0(\mu_p)$. Since the latter is an extension of degree coprime to $p$, the norm map $N_{L/L_0}$ is surjective.

\end{proof}

\subsection*{The case of complex multiplication}
We close the story of two ordinary reduction elliptic curves by considering a very special case, the one of a product $X=E_1\times E_2$ of two elliptic curves both having complex multiplication by an imaginary quadratic field. We note that this case is only partially covered by \autoref{structure1}. Namely, if $E_i[p]\subset E_i(K)$ for $i=1,2$, then we can apply \autoref{ordord} and \autoref{structure1} for $X$. 

The reason we cannot apply \autoref{structure1} when  $E_i[p]\not\subset E_i(K)$ for at least one $i$ is the following.  After extending to a tower, $K\subset L_0\subset L$, such that $L_0/K$ is unramified and $L$ is at most $L_0(\mu_p)$, the $G_L$-module $E_i[p]$ fits into a short exact sequence,
\[0\rightarrow\mu_p\rightarrow E_i[p]\rightarrow\Z/p\rightarrow 0.\] This sequence splits over $L$, that is, the corresponding Serre-Tate parameter is trivial. This follows by \cite[A.2.4]{Serre1998}. 
We therefore have an isomorphism, \[T(E_1\times E_2\times L)\simeq\Z/p\oplus T(E_1\times E_2\times L)_{\dv}.\] 

\begin{que} What happens over $K$? Is $T(X)$ divisible? Or is its finite summand isomorphic to $\Z/p$?
\end{que}
We do not have a method to answer this question in great generality, but we can say something when $K$ is unramified over $\Q_p$. This case is of particular importance, as it will give us global-to-local applications (see \autoref{global-to-local}). 

\begin{prop}\label{CM1} Let $X=E_1\times E_2$ be the product of two elliptic curves over $K$ with good ordinary reduction. Assume that $K$ is unramified over $\Q_p$ and that the elliptic curves $E_1, E_2$ have complex multiplication by an imaginary quadratic field. Then $T(E_1\times E_2)$ is divisible. 
\end{prop}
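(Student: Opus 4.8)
The plan is to show that $T(X)\simeq K(K;E_1,E_2)$ is divisible by proving it is $p$-divisible: away from $p$ it is already divisible, since $E_1,E_2$ have good reduction, by \cite[Theorem 3.5]{Raskind/Spiess2000}. So it suffices to show $K(K;E_1,E_2)/p=0$. I would assume $p$ odd (treating $p=2$ separately); then $\mu_p\not\subset K$ because $K/\Q_p$ is unramified, so $E_i[p]\not\subset E_i(K)$ and the integer $n$ of \autoref{structure1} is $0$. Following the discussion preceding this proposition, I would fix a finite unramified extension $L_0/K$ over which the unramified characters $\psi_i\colon G_K\to\mathbb F_p^\times$ describing the action on $E_i[p]^{et}\simeq\overline{E}_i[p]$ become trivial, and set $L=L_0(\mu_p)$. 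Then $L/K$ is Galois of degree dividing $(p-1)^2$, in particular prime to $p$, and by \cite[A.2.4]{Serre1998} the Serre--Tate parameters of $E_{1,L},E_{2,L}$ vanish, so $E_i[p]\subset E_i(L)$.

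Over $L$, \autoref{image1} gives $K(L;E_{1,L},E_{2,L})/p\simeq\Z/p$, \autoref{Hiranouchiinjec} gives injectivity of $s_p$ over $L$, and --- as in the proof of \autoref{ordord}, using the decompositions of \autoref{decomposition} and \autoref{formalgroup} --- the classical-symbol computation shows that $s_p$ maps $K(L;E_{1,L},E_{2,L})/p$ into $H^2(L,M)$, where $M:=E_1[p]^\circ\otimes E_2[p]^\circ$. Since $M|_{G_L}$ is trivial, $\dim_{\mathbb F_p}H^2(L,M)=\dim_{\mathbb F_p}H^2(L,\mathbb F_p)=1$, so in fact $s_p$ is an isomorphism $K(L;E_{1,L},E_{2,L})/p\xrightarrow{\ \sim\ }H^2(L,M)$, and it is $\Gal(L/K)$-equivariant because $s_p$ is functorial in the base field and $E_1,E_2,M$ are defined over $K$. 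As $[L:K]$ is prime to $p$, the restriction map identifies $K(K;E_1,E_2)/p$ with $\big(K(L;E_{1,L},E_{2,L})/p\big)^{\Gal(L/K)}$ by the usual transfer argument (the needed norm surjectivities being as in \autoref{structure1}, via \cite[Corollary 4.4]{Mazur1972}). Thus $K(K;E_1,E_2)/p\simeq H^2(L,M)^{\Gal(L/K)}$, and I would finish by showing this group is $0$.

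Write $\chi$ for the mod-$p$ cyclotomic character and $G=\Gal(L/K)$. The Weil pairing restricts to a perfect $G_K$-pairing $E_i[p]^\circ\times E_i[p]^{et}\to\mu_p$, so $E_i[p]^\circ\simeq\mathbb F_p(\chi\psi_i^{-1})$ (a one-dimensional $\mathbb F_p$-space with $G_K$ acting through the indicated character) and $M\simeq\mathbb F_p(\chi^2\psi_1^{-1}\psi_2^{-1})$; all these characters factor through $G$. By the Hochschild--Serre spectral sequence, which degenerates since $p\nmid|G|$, $G$ acts on $H^2(L,M)$ through $\chi^2\psi_1^{-1}\psi_2^{-1}$ times the action on $H^2(L,\mathbb F_p)$. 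That last action is through $\chi^{-1}$: local Tate duality identifies $H^2(L,\mathbb F_p)$ with $\Hom(\mu_p,\mathbb F_p)$ $G$-equivariantly, because the cup product pairing $H^2(L,\mathbb F_p)\times H^0(L,\mu_p)\to H^2(L,\mu_p)=Br(L)[p]$ is $G$-equivariant and $G$ acts trivially on $Br(L)[p]$. Hence $G$ acts on $H^2(L,M)$ through $\chi\psi_1^{-1}\psi_2^{-1}$. Let $I=\Gal(L/L_0)\subseteq G$ be the inertia subgroup: since $L_0/\Q_p$ is unramified, $I\simeq\Gal(\Q_p(\mu_p)/\Q_p)$ and $\chi|_I$ is the canonical isomorphism onto $\mathbb F_p^\times$, while $\psi_i|_I$ is trivial. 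So $\chi\psi_1^{-1}\psi_2^{-1}$ is nontrivial on $G$, $H^2(L,M)^G=0$, and therefore $K(K;E_1,E_2)/p=0$.

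The hard part will be the middle paragraph: certifying that the image of $s_p$ over $L$ really lies in the connected-by-connected piece $H^2(L,M)$ --- this is what fixes the relevant character as $\chi^2\psi_1^{-1}\psi_2^{-1}$ rather than something involving $\psi_1\psi_2^{-1}$ (which could agree with $\chi$ on $G$ when $\psi_1=\psi_2$) --- and pinning down the transfer identification together with the Galois-equivariance of $s_p$. The two hypotheses play independent roles: complex multiplication, via \cite[A.2.4]{Serre1998}, is what lets us take $[L:K]$ prime to $p$ (otherwise one must adjoin a $p$-th root of a Serre--Tate parameter and the transfer argument breaks down), while $K/\Q_p$ unramified is what forces $\chi$ to be ramified enough that it cannot equal the product of the unramified characters $\psi_1,\psi_2$.
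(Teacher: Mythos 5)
Your argument is correct in substance, but it reaches the conclusion by a genuinely different route than the paper. Both proofs begin the same way: CM plus \cite[A.2.4]{Serre1998} lets one pass to a Galois extension $L=L_0(\mu_p)$ of degree prime to $p$ (unramified, then cyclotomic) over which $E_i[p]\subset E_i(L)$, and over $L$ one invokes \autoref{Hiranouchiinjec} and \autoref{image1} to get $K(L;E_{1,L},E_{2,L})/p\simeq\Z/p$ with image of $s_p$ equal to $H^2\bigl(L,E_1[p]^\circ\otimes E_2[p]^\circ\bigr)$. The divergence is in the descent step. The paper argues directly with symbols: using \autoref{formalgroup} and \cite[Theorem 2.1.6]{kawachi2002}, the generator can be taken of the form $\{w,b\}_{L/L}$ with $w\in\hat{E}_1[p]$, and the hypothesis that $K/\Q_p$ is unramified forces $e_0(L)=1$, so $w$ lands in $\overline{U}^1_L\setminus\overline{U}^2_L$; a ramification-jump count then shows the second entry can be chosen in the image of $\res_{L/K}$ on $\hat{E}_2$, whence the projection formula gives $N_{L/K}(\{w,\res_{L/K}(b')\})=\{N_{L/K}(w),b'\}_{K/K}=0$ because $\hat{E}_1(\mathcal{O}_K)$ is torsion free; combined with surjectivity of the norm this kills $K(K;E_1,E_2)/p$. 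You instead use the prime-to-$p$ transfer (double-coset) identification $K(K;E_1,E_2)/p\simeq\bigl(K(L;E_{1,L},E_{2,L})/p\bigr)^{\Gal(L/K)}$, the $\Gal(L/K)$-equivariance of $s_p$, and a character computation showing $\Gal(L/K)$ acts on $H^2(L,E_1[p]^\circ\otimes E_2[p]^\circ)$ through $\chi\psi_1^{-1}\psi_2^{-1}$, which is ramified precisely because $K/\Q_p$ is unramified, so the invariants vanish. What each buys: the paper's computation stays inside the symbol calculus already set up (Kawachi--Takemoto filtrations, projection formula) and needs no equivariance or descent formalism; yours is softer, avoids the ramification-filtration bookkeeping, makes transparent where each hypothesis enters, and in fact proves the slightly more general statement that divisibility holds whenever $\chi\psi_1^{-1}\psi_2^{-1}$ is a nontrivial character. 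Two caveats: the ``hard part'' you flag --- that $\img(s_p)$ over $L$ is exactly the connected-tensor-connected line --- is most cleanly certified by the Tate-duality description of $\img(s_p)^\perp$ via \cite[Proposition 8.8]{Gazaki2017}, exactly as in \autoref{kernelord}, rather than re-running the proof of \autoref{ordord}; and your deferral of $p=2$ is a real (if minor) loose end, though the paper's own argument also tacitly assumes $p$ odd (torsion-freeness of $\hat{E}_1(\mathcal{O}_K)$ for $K/\Q_p$ unramified fails at $p=2$).
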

\begin{proof}
By the usual surjectivity of the norm argument, we may reduce to the case when \[L:=K(E_1[p],E_2[p])=K(\mu_p).\]
By \autoref{ordord}, we have an isomorphism, $K(L;E_{1,L}, E_{2,L})/p\simeq\Z/p$ and this $K$-group can be generated by symbols of the form $\{w,b\}_{L/L}$ with $w\in E_1[p](L)$. We can even choose $w\in\hat{E}_1[p](\mathcal{O}_L)$. We will show that the norm, \[N_{L/K}:K(L;E_{1,L}, E_{2,L})/p\rightarrow K(K;E_1,E_2)/p\] vanishes. Since it is also surjective, this will imply that $K(K;E_1,E_2)$ is $p$-divisible. Since $K(L;E_{1,L}, E_{2,L})/p$ is cyclic, it suffices to show that $N_{L/K}(\{w,b\}_{L/L})=0$ for some nontrivial symbol $\{w,b\}_{L/L}$. Recall from \autoref{formalgroup} that the $p$-torsion  point $w$  satisfies \[w\in\hat{E}_1(\mathfrak{m}^{e_0(L)}_L)\setminus\hat{E}_1(\mathfrak{m}^{e_0(L)+1}_L).\] Since we assumed that $K$ is unramified over $\Q_p$, we have $e_0(L)=1$ and hence $w\in\hat{E}_1(\mathfrak{m}^{1}_L)\setminus\hat{E}_1(\mathfrak{m}^{2}_L)$. By \cite[Theorem 2.1.6]{kawachi2002} we get that the image of $w$ in $\overline{U}_L^0$ lies in $\overline{U}_L^1\setminus\overline{U}_L^2$. This means that the jump of the ramification filtration of $L(\frac{1}{p}w)/L$ is exactly at $p-1$. Coming to the symbol $\{w,b\}_{L/L}$, we can write $b=(b_1, b_2)$ for the image of $b$ in $E_2(L)/p\simeq\overline{U}_L^0\oplus\overline{U}_L^{p}$. We therefore conclude that if $\{w,b\}_{L/L}\neq 0$, then $b_1\in\overline{U}_L^{p-1}$. 

We next consider the restriction map, 
\[\res_{L/K}: \hat{E}_2(\mathcal{O}_K)/[p]\hat{E}_2(\mathcal{O}_K)\rightarrow \hat{E}_2(\mathcal{O}_L)/[p]\hat{E}_2(\mathcal{O}_L).\] Since $e(L/K)=p-1$, we can easily see that the image of $\res_{L/K}$ lies in the subgroup $\hat{E}_2(\mathfrak{m}^{p-1}_L)/([p]\hat{E}_2(\mathcal{O}_L)\cap \mathfrak{m}^{p-1}_L)$. Because $L/K$ is totally ramified, the image in fact equals this subgroup. Using \cite[Theorem 2.1.6]{kawachi2002} once more we conclude that the image of 
\[\hat{E}_2(\mathcal{O}_K)/[p]\hat{E}_2(\mathcal{O}_K)\xrightarrow{\res_{L/K}} \hat{E}_2(\mathcal{O}_L)/[p]\hat{E}_2(\mathcal{O}_L)\xrightarrow{\simeq}\overline{U}_L^0\] is exactly $\overline{U}_L^{p-1}$. Thus, we can find a generator of the $K$-group $K(L;E_{1,L}, E_{2,L})/p$ of the form $\{w,\res_{L/K}(b')\}_{L/L}$ for some $b'\in E_2(K)$. The projection formula \eqref{projectionformula} yields an equality,
\[N_{L/K}(\{w,\res_{L/K}(b')\}_{L/L})=\{N_{L/K}(w),b'\}_{K/K}.\] But the latter  symbol is zero. For, $w$ is a $p$-torsion point of $E_1(L)$. Since the norm is a homomorphism, the same is true for $N_{L/K}(w)$. But the formal group $\hat{E}_1(\mathcal{O}_K)$ is torsion-free, and hence $N_{L/K}(w)=0$.

\end{proof}

\vspace{2pt}

\subsection{The product of two elliptic curves, one with ordinary and the other with supersingular reduction}  

In this subsection, we consider the product of two elliptic curves $E_1$ and $E_2$ over $K$, and we assume $E_1$ has ordinary reduction and $E_2$ has supersingular reduction. 
We recall that, when $E_i[p]\subset E_i(K)$ for $i=1,2$,  we  have an isomorphism
\[s_p: K(K;E_1,E_2)/p\xrightarrow{\simeq}(E_1\otimes^M E_2)/p \xrightarrow{\simeq} \Z/p\oplus \Z/p. \] We proceed similarly to the case of two ordinary reduction elliptic curves, considering first some cases when the injectivity of $s_{p^n}$ can be verified for every $n\geq 1$. 
\begin{prop}\label{ordss} Let $E_1,E_2$ be elliptic curves with good reduction over $K$, where $E_1$ has ordinary reduction, and $E_2$ has supersingular reduction. The Galois symbol $s_{p^n}$ is injective for every $n\geq 1$ in each of the following cases.
\begin{enumerate}
\item When $E_i[p^n]\subset E_i(K)$ for $i=1,2$ and some $n\geq 1$ which is the largest with this property, and there is some $w\in E_1[p^n]$ such that under the decomposition $E_1(K)/p\stackrel{\simeq}{\longrightarrow} \overline{U}_K^0\oplus\overline{U}_K^{pe_0(K)}$, $w$ can be written in the form $w=(w_1,w_2)$ 
	with $w_1\in \overline{U}_K^i\setminus \overline{U}_K^{i+1}$ for some $i$ coprime to $p$ and such that
	\[i \leq \min \{pt(K), p(e_0(K)-t(K))\}.\]
	\item When $E_2[p]\subset E_2(K)$, 
	and the $G_K$-module $E_1[p]$ fits into a short exact sequence of the form \eqref{ses0} having a non-trivial Serre-Tate parameter $u$  such that $u\in\overline{U}_K^i\setminus\overline{U}_K^{i+1}$ for some $i$ coprime to $p$ with $i \leq \min \{pt(K), p(e_0(K)-t(K))\}.$ In this case the $K$-group $K(K;E_1,E_2)$ is $p$-divisible. 
	\end{enumerate}
\end{prop}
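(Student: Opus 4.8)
Write $e_0:=e_0(K)$ and $t:=t(K)$. The plan is to treat the two cases separately, both of them reduced to the same computation with the classical Galois symbol $g_p$ on the graded pieces $\overline{U}^i_K/\overline{U}^{i+1}_K$ of the unit filtration. First I would collect the structural input: by \autoref{decomposition}, $E_2(K)/p\simeq\overline{U}^{pt}_K\oplus\overline{U}^{p(e_0-t)}_K$ (both unit parts); by \autoref{formalgroup}, the formal part of the ordinary curve satisfies $\hat{E}_1/[p]\simeq\overline{U}^0$; the complementary summand $\overline{U}^{pe_0(K)}_K$ of $E_1(K)/p$ only produces unramified degree-$p$ extensions, so $g_p(\{x,y\})=0$ whenever $x\in\overline{U}^{pe_0(K)}_K$ and $y$ is a unit (every unit is a norm from an unramified extension, \cite[p.~81, Proposition 1]{serre1979local}); and, since the decomposition of \autoref{decomposition} is compatible with the Kummer map, so that the generalized symbol is computed by classical symbols of the components (cf. \cite{hiranouchi/hirayama} for the ordinary--supersingular pair), for $a\in E_1(K)$ with formal component $a_1\in\overline{U}^0_K$ and $b=(b_1,b_2)\in\overline{U}^{pt}_K\oplus\overline{U}^{p(e_0-t)}_K$ one has $s_p(\{a,b\}_{K/K})=\bigl(g_p(\{a_1,b_1\}),g_p(\{a_1,b_2\})\bigr)$, with $\img(s_p)\simeq\Z/p\oplus\Z/p$ by \autoref{image1} when $E_i[p]\subset E_i(K)$. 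The last ingredient is the standard non-degeneracy of the Hilbert symbol on graded pieces: for $z\in\overline{U}^i_K\setminus\overline{U}^{i+1}_K$ with $p\nmid i$ and $0<i<pe_0(K)$ there is $y\in\overline{U}^{pe_0(K)-i}_K\setminus\overline{U}^{pe_0(K)-i+1}_K$ with $g_p(\{z,y\})\neq0$ (note $pe_0(K)-i$ is again coprime to $p$; cf. \cite{kawachi2002}, \cite{hiranouchi/hirayama}).

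For case (1) I would verify the hypothesis of \autoref{torsionpointscriterion}: $s_p$ is injective by \autoref{Hiranouchiinjec}, so it remains to show $K(K;E_1,E_2)/p$ is generated by symbols $\{w,y\}_{K/K}$ with $y\in E_2(K)$. Since $s_p$ is injective with image $\Z/p\oplus\Z/p$, it suffices to produce $y',y''\in E_2(K)$ with $s_p(\{w,y'\}_{K/K})$ and $s_p(\{w,y''\}_{K/K})$ a basis. Writing $y=(y_1,y_2)\in\overline{U}^{pt}_K\oplus\overline{U}^{p(e_0-t)}_K$, the formula above gives $s_p(\{w,y\}_{K/K})=(g_p(\{w_1,y_1\}),g_p(\{w_1,y_2\}))$. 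The hypotheses $w_1\in\overline{U}^i_K\setminus\overline{U}^{i+1}_K$ with $p\nmid i$ and $i\le\min\{pt,p(e_0-t)\}$ are exactly what guarantee $\overline{U}^{pe_0(K)-i}_K\subseteq\overline{U}^{pt}_K$ and $\overline{U}^{pe_0(K)-i}_K\subseteq\overline{U}^{p(e_0-t)}_K$; choosing $y'$ with $y'_1\in\overline{U}^{pe_0(K)-i}_K\setminus\overline{U}^{pe_0(K)-i+1}_K$ and $y'_2=0$, and $y''$ symmetrically, the non-degeneracy fact makes $s_p(\{w,y'\})=(\ast,0)$ and $s_p(\{w,y''\})=(0,\ast)$ with $\ast\neq0$, so these symbols generate $K(K;E_1,E_2)/p$. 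Since $w\in E_1[p^n]$, \autoref{torsionpointscriterion} then gives injectivity of $s_{p^m}$ for every $m\ge1$.

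For case (2), note that $E_2[p]\subset E_2(K)$ forces $\mu_p\subset K$, so \autoref{formalgroup} applies and the Serre--Tate unit $u$ is defined. Here I would imitate the $n=0$ argument of \autoref{ordord}: tensor the exact sequence $0\to\langle u\rangle\to\overline{U}^0\xrightarrow{j}E_1/p\to[E_1/\hat{E}_1]/p\to0$ of \autoref{kernelord} with $E_2/p$ and, exactly as in the proof of \autoref{ordord}, kill the term $[E_1/\hat{E}_1]/p\otimes^M E_2/p$ using that $E_2$ has good reduction (so the norm is surjective along unramified extensions, \cite[Corollary 4.4]{Mazur1972}) and that $\overline{E}_1(k_F)$ becomes $p$-divisible over an unramified extension. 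This yields a surjection $\overline{U}^0\otimes^M E_2/p\twoheadrightarrow E_1/p\otimes^M E_2/p$ whose kernel is the image of $\langle u\rangle\otimes^M E_2/p$. Evaluating at $\Spec K$ and using $E_2(K)/p\simeq\overline{U}^{pt}_K\oplus\overline{U}^{p(e_0-t)}_K$, the target is $(\overline{U}^0\otimes^M\overline{U}^{pt})(K)\oplus(\overline{U}^0\otimes^M\overline{U}^{p(e_0-t)})(K)$, each summand cyclic of order $p$ (as in \cite[Lemma 3.3]{Hiranouchi2014}, \cite{hiranouchi/hirayama}); the symbols $\{u,(y'_1,0)\}_{K/K}$ and $\{u,(0,y''_2)\}_{K/K}$ --- with $y'_1,y''_2\in\overline{U}^{pe_0(K)-i}_K\setminus\overline{U}^{pe_0(K)-i+1}_K$ chosen exactly as in case (1) and lying in $\overline{U}^{pt}_K$ resp.\ $\overline{U}^{p(e_0-t)}_K$ because $i\le\min\{pt,p(e_0-t)\}$, and mapping to nonzero classes under $g_p$ by the non-degeneracy above --- generate $(\overline{U}^0\otimes^M E_2/p)(K)$. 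Hence they lie in the image of $\langle u\rangle\otimes^M E_2/p$, so $(E_1/p\otimes^M E_2/p)(K)=0$, i.e. $K(K;E_1,E_2)/p=0$; therefore $K(K;E_1,E_2)$ is $p$-divisible and $s_{p^n}$ is injective for every $n\ge1$.

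The step I expect to be the main obstacle is controlling the classical symbol $g_p$ on the graded pieces of the unit filtration --- in particular the non-degeneracy statement producing $y$ in the prescribed step $pe_0(K)-i$, which is precisely where the hypothesis $i\le\min\{pt(K),p(e_0(K)-t(K))\}$ is used (it forces the complementary step $pe_0(K)-i$ to lie above both $pt(K)$ and $p(e_0(K)-t(K))$), together with the identification of $\img(s_p)$ with the two surviving components. The Mackey-functor bookkeeping in case (2) is then a routine transcription of the corresponding part of the proof of \autoref{ordord}.
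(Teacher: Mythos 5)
Your proposal is correct and follows essentially the same route as the paper: case (1) verifies the hypothesis of \autoref{torsionpointscriterion} by producing symbols $\{w,(b_1,0)\}_{K/K}$, $\{w,(0,b_2)\}_{K/K}$ with independent images under $s_p$, using exactly the inequality $i\le\min\{pt(K),p(e_0(K)-t(K))\}$ to place the pairing partners in the two unit summands of $E_2(K)/p$; case (2) uses the surjection $\overline{U}^0\otimes^M E_2/p\twoheadrightarrow (E_1\otimes^M E_2)/p$ and shows its source is generated by symbols with first entry the Serre--Tate unit $u$, which die in $E_1(K)/p$. The only cosmetic difference is that you invoke the graded non-degeneracy of the Hilbert symbol at the complementary level $pe_0(K)-i$ where the paper cites Hiranouchi's computations, which amounts to the same classical input.
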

\begin{proof} Note that the assumption $E_2[p]\subset E_2(K)$ implies $\mu_p\subset K^\times$. Recall that we have a decomposition as Mackey functors, \[E_2/p\stackrel{\simeq}{\longrightarrow}\overline{U}^{p(e_0(K)-t(K))}\oplus\overline{U}^{pt(K)},\] where $t(K)$ is the invariant of $E_2$ defined in section 2. 
%
%
	
 We first consider case (1). From \autoref{torsionpointscriterion}, it is enough to prove that the $K$-group $K(K;E_1,E_2)/p$ can be generated by symbols of the form $\{a,b\}_{K/K}$ with either $a\in E_1[p^n]$ or $b\in E_2[p^n]$.
	The key idea is to show that there exist elements $b_1\in \overline{U}_K^{p(e_0(K)-t(K))}$ and $b_2 \in  \overline{U}_K^{pt(K)}$ such that $s_p(\{w,(b_1, 0)\}_{K/K}) \neq 0$ and  $s_p(\{w,(0,b_2)\}_{K/K}) \neq 0$, and therefore, by injectivity of $s_p$, elements of the form $\{w,(b_1,0)\}_{K/K}$ and $\{w,(0,b_2)\}_{K/K}$ generate $K(K;E_1,E_2)/p$. 
	
	We have that 
	 $w_1\in \overline{U}_K^i\setminus \overline{U}_K^{i+1}$ for some $i<pe_0(K)$ coprime to $p$ that satisfies 
	\[
	i  \leq \min \{pt(K), p(e_0(K)-t(K))\}.
	\]

	If $i + p e_0(K) - pt(K) \leq pe_0(K)$, then, from the computations in \cite[Lemma 3.4]{Hiranouchi2014}, we can find $b_1 \in \overline{U}_K^{p(e_0(K)-t(K))}$ such that $\{w,(b_1,0)\}_{K/K}\neq 0$; on the other hand, if  $i + pt(K) \leq p e_0(K)$, then we can find $b_2 \in \overline{U}_K^{pt(K)}$ such that $\{w,(0,b_2)\}_{K/K}\neq 0$. The first case occurs if $i \leq pt(K)$, while the second one occurs if $i\leq p(e_0(K)-t(K))$. Since we have $i \leq \min \{pt(K), p(e_0(K)-t(K))\}$, we
	get the result. 
\medskip

We now consider case (2). As usual, using \autoref{formalgroup} we get 
 a surjection of Mackey functors, 
\[\overline{U}^0\otimes^M E_2/p\stackrel{j}{\longrightarrow} (E_1\otimes ^M E_2)/p\rightarrow 0.\] The first Mackey functor is isomorphic to $(\overline{U}^0\otimes^M\overline{U}^{pt(K)})\oplus(\overline{U}^0\otimes^M\overline{U}^{p(e_0(K)-t(K))})$. \cite[Lemma 3.4]{Hiranouchi2014} gives that 
the group $\overline{U}^0\otimes^M\overline{U}^{pt(K)}\oplus\overline{U}^0\otimes^M\overline{U}^{p(e_0(K)-t(K))}(K)$ is isomorphic via the classical Galois symbol to $\Z/p\oplus\Z/p$. Moreover, because we assumed that $u\in\overline{U}_K^i\setminus\overline{U}_K^{i+1}$ with $i$ coprime to $p$ and $i \leq \min \{pt(K), p(e_0(K)-t(K))\}$,  the same lemma \cite[Lemma 3.4]{Hiranouchi2014} together with the discussion preceding it imply that there exist $b_1,b_2\in E_2(K)/p$ such that $\{u,b_1\}_{K/K}$ and $\{u,b_2\}_{K/K}$ generate $(\overline{U}^0\otimes^M E_2/p)(K)$. But both these symbols are mapped to zero under $j$. We conclude that $(E_1\otimes^M E_2)(K)/p=0$ and in particular that the $K$-group $K(K;E_1,E_2)$ is $p$-divisible.

	
\end{proof}
Similarly to the case of two ordinary reduction elliptic curves, we can show that if some of the assumptions of \autoref{ordss} are not satisfied, the Mackey functor Galois symbol has a non-trivial kernel. This is the purpose of the next proposition.
\begin{prop}\label{kernelss} Let $E_1,E_2$ be elliptic curves with good reduction over $K$, where $E_1$ has ordinary reduction, and $E_2$ has supersingular reduction. We assume that $E_2[p]\subset E_2(K)$,  
	and the $G_K$-module $E_1[p]$ fits into a short exact sequence of the form \eqref{ses0} having a non-trivial Serre-Tate parameter $u$. Suppose that $u\in\overline{U}_K^i\setminus\overline{U}_K^{i+1}$ for some integer $i$ such that $i>\min\{pt(K),p(e_0(K)-t(K))\}$. Then the Galois symbol $s_p$ vanishes, while the group $(E_1/p\otimes^M E_2/p)(K)$ contains $\Z/p$.

\end{prop}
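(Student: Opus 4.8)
The statement packages two facts --- that the generalized Galois symbol $s_p$ annihilates the Mackey product $(E_1/p\otimes^M E_2/p)(K)$, and that this Mackey product is nonzero --- and I would prove them in that order, following the model of \autoref{kernelord}. For the \emph{vanishing of $s_p$} I would run the Tate-duality argument used in the proof of \autoref{kernelord}. Under the Weil pairing the local Tate dual of $H^2(K,E_1[p]\otimes E_2[p])$ is $\Hom_{G_K}(E_1[p],E_2[p])$, and by \cite[Theorem~1.1]{Gazaki2017} the orthogonal complement of $\img(s_p)$ consists of the $f$ that extend to a homomorphism $\mathcal{E}_1[p]\to\mathcal{E}_2[p]$ of finite flat group schemes over $\mathcal{O}_K$. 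I claim every $G_K$-homomorphism $f\colon E_1[p]\to E_2[p]$ extends. Since $E_2[p]\subset E_2(K)$ the $G_K$-action on $E_2[p]$ is trivial, so $f$ must annihilate $(\sigma-1)E_1[p]$ for all $\sigma$, hence kills the submodule they generate; as \eqref{ses0} for $E_1$ is non-split (the Serre-Tate parameter $u$ being nontrivial), that submodule is $E_1[p]^\circ\simeq\mu_p$. Thus $f$ factors as $E_1[p]\twoheadrightarrow E_1[p]^{et}\xrightarrow{g}E_2[p]$ with $E_1[p]^{et}\simeq\Z/p$ of trivial action. The first map is the generic fibre of the connected-\'etale quotient $\mathcal{E}_1[p]\to\mathcal{E}_1[p]^{et}=(\Z/p)_{\mathcal{O}_K}$, and $g$ extends because a homomorphism out of the constant group scheme $(\Z/p)_{\mathcal{O}_K}$ is just a $p$-torsion section of $\mathcal{E}_2[p]$ over $\mathcal{O}_K$, and $\mathcal{E}_2[p](\mathcal{O}_K)=E_2(K)[p]=E_2[p]$. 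Hence the orthogonal complement is all of $\Hom_{G_K}(E_1[p],E_2[p])$ and $s_p=0$. (This step uses nothing about $i$; it is the nonvanishing below that makes $s_p=0$ a genuine kernel phenomenon, in contrast with \autoref{ordss}(2).)

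\textbf{The Mackey product contains $\Z/p$.} Here I would mirror the proofs of \autoref{ordss}(2) and \autoref{kernelord}. The reduction sequence \eqref{ses1} for $E_1$ and \autoref{formalgroup} give an exact sequence of Mackey functors $0\to\langle u\rangle\to\overline{U}^0\xrightarrow{j}E_1/p\to[E_1/\hat{E}_1]/p\to 0$, with $\langle u\rangle$ generated by the Serre-Tate parameter, and \autoref{decomposition} (applicable since $E_2$ has good reduction and $E_2[p]\subset E_2(K)$) gives $E_2/p\simeq\overline{U}^{pt(K)}\oplus\overline{U}^{p(e_0(K)-t(K))}$. The hypothesis $i>\min\{pt(K),p(e_0(K)-t(K))\}$ says precisely that $u$ fails to pair, under the classical Galois symbol, with at least one of these two summands; call that summand $N$. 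Using $e_0(F)=e(F/K)e_0(K)$ and $t(F)=e(F/K)t(K)$, one checks that for \emph{every} finite $F/K$ the restriction $\res_{F/K}(u)$ pairs trivially with all of $N(F)$, since the relevant filtration levels add to more than $pe_0(F)$; as by \cite[Lemma~3.3]{Hiranouchi2014} the classical symbol is injective on $(\overline{U}^0\otimes^M N)(F)\simeq\Z/p$, the morphism of Mackey functors $\langle u\rangle\otimes^M N\to\overline{U}^0\otimes^M N$ is zero, and by right-exactness $\overline{U}^0\otimes^M N\xrightarrow{\ \sim\ }\img(j)\otimes^M N$. Moreover, since norms on the functors $\overline{U}^{j}$ are surjective along unramified extensions (\cite[p.~81, Prop.~1]{serre1979local}) and every element of $[E_1/\hat{E}_1]$ becomes $p$-divisible over an unramified extension, $[E_1/\hat{E}_1]/p\otimes^M N=0$ (cf.\ \cite[Cor.~4.4]{Mazur1972}), whence a surjection $\overline{U}^0\otimes^M N\twoheadrightarrow E_1/p\otimes^M N$. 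Evaluating at $K$, the corresponding summand $\Z/p$ of $(\overline{U}^0\otimes^M E_2/p)(K)$ maps onto $(E_1/p\otimes^M N)(K)\subseteq(E_1/p\otimes^M E_2/p)(K)$, so it remains only to check this map is nonzero.

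\textbf{Main obstacle.} That last point is the delicate one: I must show $\img(j)\otimes^M N\to E_1/p\otimes^M N$ is injective on $K$-points, i.e.\ that quotienting by the \'etale part $[E_1/\hat{E}_1]/p$ does not collapse the surviving $\Z/p$. This reduces to the vanishing on $K$-points of the connecting map $\mathrm{Tor}_1^M([E_1/\hat{E}_1]/p,N)\to\img(j)\otimes^M N$, which I expect to follow --- exactly as \autoref{kernelord} handles the analogous $\overline{U}^0\otimes^M\overline{U}^0$ quotient --- from the acyclicity of $[E_1/\hat{E}_1]$ against the $\overline{U}^{j}$ under unramified extensions (the same norm-surjectivity inputs), but it is where the argument needs care. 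One also dispatches separately the boundary case $i=pe_0(K)$, where $\overline{U}_K^{i}/\overline{U}_K^{i+1}\simeq\Z/p$ rather than $\simeq k$; it falls under the same analysis, since then too $u$ misses one summand $N$ of $E_2/p$.
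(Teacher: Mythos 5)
Your proposal is correct and follows the paper's strategy quite closely, with two differences worth recording. For the vanishing of $s_p$, the paper quotes \cite[Proposition 8.11]{Gazaki2017}, which identifies the orthogonal complement of $\img(s_p)$ with $\{f\in\Hom_{G_K}(E_1[p],E_2[p]):f(E_1[p]^\circ)=0\}$, and then checks by a matrix computation that every $G_K$-homomorphism kills $E_1[p]^\circ$; you instead invoke the general \cite[Theorem 1.1]{Gazaki2017} and verify directly that every $G_K$-homomorphism extends to the N\'eron models, by first proving (from the trivial action on $E_2[p]$ and the non-splitness of \eqref{ses0}) exactly the same fact $f(E_1[p]^\circ)=0$, and then extending the induced map out of $\Z/p$ via $\mathcal{E}_2[p](\mathcal{O}_K)=E_2[p]$. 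These two routes are equivalent in content and yours is sound; Proposition 8.11 just packages the reduction for the paper. For the nonvanishing of the Mackey product your argument is the paper's: it uses the sequence $\langle u\rangle\otimes^M E_2/p\to\overline{U}^0\otimes^M E_2/p\xrightarrow{j}E_1/p\otimes^M E_2/p\to 0$, observes (reducing without loss of generality to $pt(K)<i<p(e_0(K)-t(K))$, which is your choice of the summand $N$) that $\{u,x\}_{K/K}$ is nontrivial in the $\overline{U}^{pt(K)}$-summand while $\langle u\rangle\otimes^M\overline{U}^{p(e_0(K)-t(K))}=0$, and concludes $E_1/p\otimes^M E_2/p\simeq\overline{U}^0\otimes^M\overline{U}^{p(e_0(K)-t(K))}$, whose value at $K$ is $\Z/p$ by \cite[Lemma 3.3]{Hiranouchi2014}. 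The ``main obstacle'' you flag --- that quotienting by the \'etale part $[E_1/\hat{E}_1]/p$ might collapse the surviving $\Z/p$, i.e.\ exactness of the displayed sequence at the middle term --- is exactly the point at which the paper simply asserts exactness (as it also does in \autoref{kernelord}) and offers no Tor-type argument beyond the vanishing statements you too establish; so your attempt is not missing any idea present in the paper's proof, and indeed your level-by-level verification that $\langle u\rangle\otimes^M N$ vanishes (using $e_0(F)=e(F/K)e_0(K)$, $t(F)=e(F/K)t(K)$ and injectivity of the classical symbol on $(\overline{U}^0\otimes^M N)(F)$), as well as your remark on the boundary case $i=pe_0(K)$, is slightly more detailed than what the paper writes.
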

\begin{proof} The proof is very analogous to the proof of \autoref{kernelord}, so we give a less detailed analysis of the argument. All the cases to consider are similar, so without loss of generality we assume that $pt(K)<i<p(e_0(K)-t(K))$. In this case we will show that $s_p=0$, while the Mackey functor $E_1/p\otimes^M E_2/p$ is isomorphic to $\overline{U}^0\otimes^M\overline{U}^{p(e_0(K)-t(K))}$. 

We consider the exact sequence of Mackey functors, 
\[\langle u \rangle\otimes^M E_2/p\rightarrow\overline{U}^0\otimes^M E_2/p\xrightarrow{j} E_1/p\otimes^M E_2/p\rightarrow 0,\] where $\langle u \rangle$ is the Mackey functor defined in the proof of \autoref{ordord}.  We have a decomposition $\langle u \rangle\otimes^M E_2/p\simeq(\langle u \rangle\otimes^M\overline{U}^{pt(K)})\oplus(\langle u \rangle\otimes^M\overline{U}^{p(e_0(K)-t(K))})$. 
Similarly to the proof of \autoref{ordss}, because we assumed  $i<p(e_0(K)-t(K))$, we can find $x\in\overline{U}_K^{pt(K)}$ such that the symbol $\{u,x\}_{K/K}\in(\langle u \rangle\otimes^M\overline{U}^{pt(K)})(K)\subset(\overline{U}^0\otimes^M \overline{U}^{pt(K)})(K)$ is non-trivial. On the other hand, the inequality $i>pt(K)$ implies that $\langle u \rangle\otimes^M\overline{U}^{p(e_0(K)-t(K))}=0$. We conclude that $E_1/p\otimes^M E_2/p\simeq \overline{U}^0\otimes^M\overline{U}^{p(e_0(K)-t(K))}$. 

Next we show that $s_p=0$. We consider the orthogonal complement of $\img(s_p)$ under local Tate duality. According to \cite[Proposition 8.11]{Gazaki2018}, the complement in this case coincides with the following subgroup of $\Hom_{G_K}(E_1[p],E_2[p])$, 
\[\{f\in\Hom_{G_K}(E_1[p],E_2[p]): f(E_1[p]^\circ)=0\}.\]
The $G_K$-action on $E_1[p]$ is of the form 
$\left(\begin{array}{cc}
1 & \alpha(\sigma)\\
0 & 1
\end{array}\right)$, for $\sigma\in G_K$, where $\alpha:G_K\rightarrow\Hom(\Z/p,\mu_p)$ is a non-trivial character. Let $f\in\Hom_{G_K}(E_1[p],E_2[p])$. Once again we may write $f$ in a matrix form, 
$f=\left(\begin{array}{cc}
f_1 & f_2\\
f_3 & f_4
\end{array}\right).$ We have a matrix equality,
\[\left(\begin{array}{cc}
f_1 & f_2\\
f_3 & f_4
\end{array}\right)=\left(\begin{array}{cc}
f_1 & f_2\\
f_3 & f_4
\end{array}\right)\left(\begin{array}{cc}
1 & \alpha(\sigma)\\
0 & 1
\end{array}\right),\] which yields $f_1\alpha(\sigma)=f_3\alpha(\sigma)=0$, for every $\sigma\in G_K$ and hence $f_1=f_3=0$. This means exactly that $f$ vanishes when restricted to $E_1[p]^\circ$. 

\end{proof}

Our main goal now is to show that when the assumptions of \autoref{ordss} do not hold, we can construct a tower $K\subset K_1\cdots\subset K_r$ of finite extensions of $K$ so that the weaker criterion described in \autoref{vanishing} holds over $K_r$.  
This will imply that there exists a large enough integer $N\geq 1$ such that the $K$-group  $p^N K(K;E_1,E_2)$ is $p$-divisible. In particular \autoref{maintheoremintro} holds for $E_1\times E_2$.  We start with the following lemma. 
\begin{lem}\label{extendzeta} Let $K$ be a $p$-adic field containing a primitive $p^2$th root of unity, $\zeta_{p^2}$. Let $u\in\overline{U}_K^0$ be such that $u\in\overline{U}_K^i\setminus\overline{U}_K^{i+1}$, where $0<i<pe_0(K)$ and $i$ is coprime to $p$. Let $L=K(\sqrt[p]{u})$. Write $v=\sqrt[p]{u}$. Then $v\in\overline{U}_L^i\setminus\overline{U}_L^{i+1}$.
\end{lem}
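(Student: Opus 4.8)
The plan is to turn the statement into a valuation computation inside the degree-$p$ extension $L/K$. First I would replace the class $u$ by an actual unit lying in $U^i_K\setminus U^{i+1}_K$; this is harmless, because any lift of $u\in\overline U^i_K$ to $U^i_K$ automatically avoids $U^{i+1}_K$ (otherwise $u$ would already lie in $\overline U^{i+1}_K$). Since $i$ is prime to $p$ the class of $u$ in $K^\times/p$ is nonzero, so $u\notin(K^\times)^p$ and $[L:K]=p$. Moreover $L$ is not the unramified extension $K'$ of degree $p$: by \autoref{units}(a), applied over both $K$ and $K'$ — legitimate since $i<pe_0(K)$ which also equals $pe_0(K')$ — the induced map on graded pieces $\overline U^i_K/\overline U^{i+1}_K\cong k\hookrightarrow k'\cong\overline U^i_{K'}/\overline U^{i+1}_{K'}$ is injective, so $u$ would remain a non-$p$-th power over $K'$. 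Hence $L/K$ is (wildly) totally ramified of degree $p$, and $v_L$ restricts to $p\cdot v_K$ on $K$, with $v_L(p)=pe_K$.

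Next I would compute $v_L(v-1)$. Writing $v=1+w$ and $u=1+\eta$ with $v_L(\eta)=pi$, the identity $u=v^p=1+pw+\binom p2 w^2+\cdots+w^p$ shows that the summand $\binom pk w^k$ has valuation $pe_K+k\,v_L(w)$ for $1\le k\le p-1$ and $p\,v_L(w)$ for $k=p$. If $v_L(w)<pe_0(K)$ the term $w^p$ strictly dominates all the others, forcing $p\,v_L(w)=v_L(\eta)=pi$, i.e. $v_L(w)=i$; if instead $v_L(w)\ge pe_0(K)$, then every summand has valuation $\ge p^2e_0(K)>pi$, a contradiction. Thus $v_L(v-1)=i$, so $v\in U^i_L$ and its class lies in $\overline U^i_L$.

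Finally I would rule out $v\in\overline U^{i+1}_L$, i.e. show there is no $c\in L^\times$ and $v'\in U^{i+1}_L$ with $v=v'c^p$. Comparing valuations forces $c\in\mathcal O_L^\times$, and reduction modulo $\mathfrak m_L$ together with $p\nmid|k_L^\times|$ forces $c\in U^1_L$ (and $c\ne1$ since $v\ne1$). Setting $j=v_L(c-1)$, the analogous binomial expansion gives $v_L(c^p-1)=pj$ when $j<pe_0(K)$ and $v_L(c^p-1)>i$ when $j\ge pe_0(K)$; since $i$ is prime to $p$ and $i<pe_0(K)$, in every case $v_L(c^p-1)\ne i$. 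But then $v'-1=(v-1)c^{-p}+(c^{-p}-1)$ would have valuation $\min\{i,\,v_L(c^p-1)\}\le i$, contradicting $v'\in U^{i+1}_L$. This closes the argument. The one genuinely delicate point throughout is the valuation bookkeeping in these binomial expansions — in particular checking that no unexpected cancellation occurs at the threshold value $j=pe_0(K)$ — but all of that is formal once the non-cancellation facts are recorded; the hypothesis $\zeta_{p^2}\in K$ enters only to guarantee $\mu_p\subset L$, so that \autoref{units} may be invoked over $L$ as well.
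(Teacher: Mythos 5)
Your proof is correct, but it follows a genuinely different route from the paper. The paper deduces the lemma from Takemoto's explicit formulas for the Hasse--Herbrand functions of the tower $K\subset L\subset M=L(\sqrt[p]{v})$: writing down $\psi_{L/K}$, $\psi_{M/L}$, $\psi_{M/K}$ and using $\psi_{M/K}=\psi_{M/L}\circ\psi_{L/K}$, it matches the two points of non-differentiability and solves for $j$, so the key input is ramification theory for the degree-$p^2$ Kummer extension $K(\sqrt[p^{2}]{u})/K$ (this is where the hypothesis $\zeta_{p^2}\in K$ is really used). You instead argue by hand: after checking that $L/K$ is totally ramified of degree $p$ (your graded-piece argument over the unramified extension is fine, granting that the isomorphisms of \autoref{units}(a) are the natural leading-coefficient maps, so that the comparison map is $k\hookrightarrow k'$), you read off $v_L(v-1)=i$ from the binomial expansion of $v^p$, using that $w^p$ strictly dominates below the threshold $pe_0(K)=e_0(L)$ and that above it all terms have valuation $\geq p^2e_0(K)>pi$; and you exclude $v\in\overline{U}_L^{i+1}$ by the same valuation bookkeeping applied to a hypothetical relation $v=v'c^{p}$, where $v_L(c^p-1)$ is either divisible by $p$ or exceeds $i$, hence never equals $i$. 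The bookkeeping is sound, including at the boundary $j=pe_0(K)$, where only a lower bound is needed so cancellation is harmless. What each approach buys: the paper's proof recycles the ramification-theoretic toolkit (Takemoto's lemma) already quoted elsewhere and yields the ramification break of $L(\sqrt[p]{v})/L$ as a byproduct, while your argument is elementary, avoids invoking the degree-$p^2$ extension altogether, and in fact only uses $\mu_p\subset K$, so it proves the lemma under a weaker hypothesis than the stated $\zeta_{p^2}\in K$.
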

\begin{proof}
	We have $v\in  \overline{U}_L^j \setminus \overline{U}_L^{j+1}$ for some $j$. We will show that $j=i$. Write $M= L(\sqrt[p]{v})$, and we therefore have a tower $K\subset L\subset M$ of totally ramified degree $p$ extensions.
	Using Takemoto's computation of the Hasse-Herbrand function (\cite[Lemma 2.2 (2)]{takemoto}), we have  
	\[
	\psi_{L/K}(t) = \begin{cases}
	t, & 0\leq t \leq pe_0(K) - i \\ pt - p e_K + (p -1) i, & pe_0(K)-i\leq t, 
	\end{cases}
	\]  
	\[
	\psi_{M/K}(t) = \begin{cases}
	t, & 0\leq t \leq pe_0(K) - i \\ pt - p e_K + (p -1) i, & pe_0(K)-i\leq t \leq p e_0(K)+e_K -i \\ p^2 t - 2 p^2 e_K + (p^2 -1)i, &  p e_0(K)+e_K -i \leq t
	\end{cases}
	\]  
	and
	\[
	\psi_{M/L}(t) = \begin{cases}
	t, & 0\leq t \leq p^2e_0(K) - j \\  p t - p^2 e_K + (p -1)j, & p^2 e_0(K) - j \leq t
	\end{cases}
	\]  
	
	The function $\psi_{M/K}= \psi_{M/L}\circ\psi_{L/K}$ is non-differentiable at two points in $(0,\infty)$; one corresponds to the unique point where $\psi_{L/K}$ is non-differentiable, $pe_0(K)-i$, and the other corresponds to the unique point where $\psi_{M/L}$ is non-differentiable. On one hand, the second point where $\psi_{M/K}$ is non-differentiable is $\tilde{t}=  p e_0(K)+e_K -i$. On the other hand,  $\psi_{M/L}$ is non-differentiable at $p^2 e_0(K) - j$, so we have
	\[
	\psi_{L/K}(\tilde{t})=p^2 e_0(K) - j.
	\]
	Using Takemoto's formula for $\psi_{L/K}$ and the fact that $\tilde{t}>pe_0(K)-i$, we get
	\begin{align*}
	p^2 e_0(K) - j&=\psi_{L/K}(p e_0(K)+e_K -i)\\&=
	 p(p e_0(K)+e_K -i) - pe_K +(p-1)i \\&=p^2 e_0(K) - i.
	\end{align*}
	Hence we conclude that $i=j$.

\end{proof}
\begin{theo} \label{ordsszeta} Let $E_1,E_2$ be elliptic curves with good reduction over $K$, where $E_1$ has ordinary reduction, and $E_2$ has supersingular reduction. There exists a positive integer $N\geq 1$ such that  the group
	$p^N K(K;E_1,E_2)$ is $p$-divisible. In particular, the Albanese kernel of the product $X=E_1\times E_2$ is the direct sum of a finite group and a divisible group. 
\end{theo}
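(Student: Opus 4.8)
\emph{Outline of the approach.} The plan is to reduce, after a finite base change, to a setting where \autoref{ordss} applies, to extract there the $p$-divisibility of a tail of the Somekawa $K$-group, and then to descend back to $K$; the new input that makes the base change work is \autoref{extendzeta}. Concretely, I would first replace $K$ by a finite extension $K'$ such that $\mu_{p^2}\subset K'$, $E_2[p]\subset E_2(K')$, and — after a further unramified extension — the connected--\'etale sequence of $E_1[p^{n+1}]$ takes the split-fibre form $0\to\mu_{p^{n+1}}\to E_1[p^{n+1}]\to\Z/p^{n+1}\to 0$, where $n\geq 0$ is the largest integer with $E_1[p^n]\subset E_1(K')$. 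This is harmless: no reduction type changes, and since $E_2[p]=\hat E_2[p]$ sits inside the height-two formal group of the supersingular curve $E_2$, the field $K(E_2[p])$ is obtained from $K$ by an unramified extension followed by a tamely ramified one, so the descent through these preliminary layers will cause no trouble at the end.

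\emph{Building the tower.} Over $K'$ one has by \autoref{decomposition} a decomposition $E_2/p\simeq\overline U^{pt}\oplus\overline U^{p(e_0-t)}$ and by \autoref{formalgroup} a surjection of Mackey functors $\overline U^0\otimes^M E_2/p\twoheadrightarrow(E_1\otimes^M E_2)/p$ coming from $\hat E_1/[p]\simeq\overline U^0$. There is a distinguished class $u\in\overline U^0$ — the Serre--Tate parameter of $E_1$ when $n=0$, or the $\overline U^0$-component of a point $w\in E_1[p^n]$ produced, as in \autoref{ordord} and \autoref{tower1}, by wild ramification of $K'(E_1[p^{n+1}])/K'$ when $n\geq 1$ — and, absorbing finitely many unramified extensions if necessary (only finitely many occur, since $\mu_{p^M}\not\subset K$ for $M\gg0$), I may assume $u\in\overline U^i_{K'}\setminus\overline U^{i+1}_{K'}$ with $i$ coprime to $p$. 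If $i\leq\min\{pt,p(e_0-t)\}$, then \autoref{ordss} already gives, via \autoref{torsionpointscriterion}, that $p^n K(K';E_1,E_2)$ is $p$-divisible (and $K(K';E_1,E_2)$ itself is $p$-divisible when $n=0$, since then $(E_1\otimes^M E_2)/p=0$). Otherwise — precisely the situation of \autoref{kernelss}, where the Mackey-functor symbol has a nonzero kernel — I adjoin $\sqrt[p]{u}$: by \autoref{extendzeta} the jump $i$ is unchanged over the new totally ramified degree-$p$ field, whereas $e_0$ and the supersingular invariant $t$ are each multiplied by $p$, hence so is the bound $\min\{pt,p(e_0-t)\}$. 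Iterating, and observing that at each new level a Serre--Tate parameter or torsion point with the same jump $i$ reappears, after finitely many steps the fixed integer $i$ drops below the growing bound and I reach a finite extension $K_r/K$ over which \autoref{ordss} applies; thus $p^{n_r}K(K_r;E_1,E_2)=p^m K(K_r;E_1,E_2)$ for all $m\geq n_r$ — i.e.\ $p^{n_r}K(K_r;E_1,E_2)$ is $p$-divisible — for some $n_r\geq 1$.

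\emph{Descending and concluding.} It remains to come down from $K_r$ to $K$. Writing $A=K(K;E_1,E_2)$ and $B=K(K_r;E_1,E_2)$, the transfer relation $N_{K_r/K}\circ\res_{K_r/K}=[K_r:K]$ together with the $p$-divisibility of $p^{n_r}B$ gives $[K_r:K]\cdot p^mA\subseteq p^{m+1}A$ for all $m\geq n_r$. Along the unramified and prime-to-$p$ layers of the tower this combines with the projection formula \eqref{projectionformula} and the surjectivity of $N_{L/K}$ on $E_i$-points (\cite[Corollary 4.4]{Mazur1972} in the unramified case) to make the norm map $K(K_r;E_1,E_2)\to K(K;E_1,E_2)$ surjective; for the remaining wildly ramified degree-$p$ layers $K_{j+1}=K_j(\sqrt[p]{u_j})$ one must instead analyse $\operatorname{coker}\big(N_{K_{j+1}/K_j}\colon E_1(K_{j+1})\to E_1(K_j)\big)$ by hand, using that $u_j$ is a Serre--Tate parameter — equivalently, that $j(u_j)$ is $p$ times a torsion point of $E_1$, and, as $\mu_p\subset K_j$, that $\pm u_j=N_{K_{j+1}/K_j}(\sqrt[p]{u_j})$ — to conclude that $p^N A$ is $p$-divisible for $N$ large. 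Since $A/p^i$ is finite for every $i$, this forces $A\simeq F\oplus D$ with $F$ finite and $D$ the maximal $p$-divisible subgroup; combined with the $m$-divisibility (for $m$ prime to $p$) of $A_0(X)$ (\cite{Saito/Sato2010}) and the identification $T(X)\simeq K(K;E_1,E_2)$ of Raskind--Spiess (\cite{Raskind/Spiess2000}), this yields the asserted decomposition of $T(X)$. The main obstacle, and the genuinely delicate step, is exactly this last descent through the wildly ramified layers: the transfer relation alone is insufficient there — it loses a factor equal to the $p$-part of $[K_r:K]$ — so the norm on $E_1$-points must be controlled directly.
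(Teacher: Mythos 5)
Your construction of the tower is essentially the paper's: base change so that $\mu_{p^2}$, $E_2[p]$ and enough formal-group torsion of $E_1$ are rational, locate a class in $\overline{U}^i\setminus\overline{U}^{i+1}$ with $i$ coprime to $p$ coming from a torsion point (or Serre--Tate parameter) of $E_1$, and use \autoref{extendzeta} to see that after adjoining a $p$-th root the jump $i$ is unchanged while $e_0$ and $t$ are multiplied by $p$, so that after finitely many steps \autoref{ordss} (via \autoref{torsionpointscriterion} and \autoref{vanishing}) gives that $p^{n+r}K(L_r;E_1,E_2)$ is $p$-divisible. Up to this point you agree with the paper.

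The gap is in your descent. You assert that the transfer relation is ``insufficient because it loses a factor equal to the $p$-part of $[K_r:K]$'' and that one must instead prove, layer by layer through the wildly ramified degree-$p$ extensions, a statement about $\operatorname{coker}\bigl(N_{K_{j+1}/K_j}\colon E_1(K_{j+1})\to E_1(K_j)\bigr)$ --- a step you do not carry out and which is genuinely problematic: for a ramified $\Z/p$-extension the norm map on points of an elliptic curve with good reduction is in general \emph{not} surjective (Mazur's result \cite[Corollary 4.4]{Mazur1972} applies only to unramified extensions), so norm surjectivity of the $K$-group through those layers is not available by this route. But this extra work is unnecessary, and the claimed insufficiency is not real: the theorem only asks for $p$-divisibility of $p^N K(K;E_1,E_2)$ for \emph{some} $N$, so losing a factor $p^l$ with $p^l\,\|\,[L_r:K]$ merely increases $N$ by $l$. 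This is exactly how the paper concludes: writing $[L_r:K]=p^l m$ with $(m,p)=1$, for $x\in K(K;E_1,E_2)$ one has $p^l(mx)=N_{L_r/K}(\res_{L_r/K}(x))$, hence $p^{l+n+r}(mx)=N_{L_r/K}\bigl(p^{n+r}\res_{L_r/K}(x)\bigr)\in p^{l+n+r+1}K(K;E_1,E_2)$ by the $p$-divisibility over $L_r$; since $K(K;E_1,E_2)$ is $m$-divisible by Raskind--Spiess (\cite[Theorem 3.5]{Raskind/Spiess2000}, both curves having good reduction --- this, rather than the Saito--Sato statement about $A_0(X)$, is the divisibility one needs), every element is of the form $mx$, and one takes $N=l+n+r$. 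Note that you had already written down the needed relation $[K_r:K]\cdot p^mA\subseteq p^{m+1}A$; combining it with the prime-to-$p$ divisibility of $A=K(K;E_1,E_2)$ finishes the proof, and the ``delicate step'' you flag simply disappears.
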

\begin{proof} 
We start by extending the base field $K$ to a finite extension $K_1$ which is such that,
\begin{itemize}
\item  $E_i[p]\subset E_i(K_1)$, for $i=1,2$,
\item $\mu_{p^2}\subset K_1$,
\item $\hat{E}_1[p^n]\subset \hat{E}_1(\mathcal{O}_{K_1})$ for some $n\geq 1$ and $n$ is the largest with this property. 
\end{itemize}
We consider a $p^n$-torsion point $w_0\in \hat{E}_1[p^n]$ such that $w_0$ does not lie in the image of $[p]:\hat{E}_1(\mathcal{O}_{K_1})\rightarrow\hat{E}_1(\mathcal{O}_{K_1})$. Simply speaking, $\frac{1}{p}w_0\not\in E_1(K_1)$. We consider the decomposition $E_1(K_1)/p\simeq\overline{U}_{K_1}^0\oplus\overline{U}_{K_1}^{pe_0(K_1)}$ and we write $w_0=(w_{0,1}, w_{0,2})$. Assume that $w_{0,1}\in\overline{U}_{K_1}^i\setminus\overline{U}_{K_1}^{i+1}$ for some $0\leq i\leq pe_0(K_1)$. 
Let $t=t(K_1)$ be the $t$-invariant of the  elliptic curve $E_2$ over $K_1$. If $i\leq\min\{pt(K_1),p(e_0(K_1)-t(K_1))\}$, then we can imitate the method of \autoref{ordss} to find elements $b_1,b_2\in E_2(K_1)/p$ such that $\{w_0,b_1\}_{K_1/K_1}$ and $\{w_0,b_2\}_{K_1/K_1}$ generate $K(K_1;E_1,E_2)/p$. Using \autoref{vanishing}, in this case we get that the group $p^n K(K_1;E_1,E_2)$ is $p$-divisible. 

If the index $i$ does not satisfy the above inequality, we perform the following algorithmic process. By extending the base field if necessary, we may assume that the extension $L_1=K_1(\frac{1}{p}w_0)/K_1$ has wild ramification. This in particular means that the index $i$ is coprime to $p$. Note that this will always happen eventually (see \autoref{tower1}). We fix an element $w_1\in \hat{E}_1(\mathcal{O}_{L_1})$ such that $[p]w_1=w_0$. Moreover we write  $w_1=(w_{1,1}, w_{1,2})\in E_1(L_1)/p$. The claim is that $w_{1,1}\in\overline{U}_{L_1}^i\setminus\overline{U}^{i+1}_{L_1}$. This follows by \autoref{extendzeta}. Note that in order to apply this lemma, we needed to assume $\mu_{p^2}\subset K_1$. 

Next, notice that $e_0(L_1)=pe_0(K_1)$ and $t(L_1)=pt(K_1)$. We check again whether \[i\leq\min\{p^2t(K_1),p^2(e_0(K_1)-t(K_1))\}.\] If not, we repeat the process, adding more torsion points of the formal group until we find a finite extension $L_{r}$ such that $i\leq\min\{p^{r+1}t(K_1),p^{r+1}(e_0(K_1)-t(K_1))\}$. We conclude that for some $r\geq 0$ the group 
$p^{n+r} K(L_r;E_1,E_2)$ is $p$-divisible. 

To finish the argument, we need to show that \autoref{maintheoremintro} holds for the product $X=E_1\times E_2$. 
 Let $s=[L_r:K]=p^l\cdot m$, where $m$ is coprime to $p$. Let $x\in K(K;E_1,E_2)$. We have, 
$s\cdot x=N_{L_r/K}(\res_{L_r/K}(x))$. Set $y=m\cdot x$ and $v=\res_{L_r/K}(x)\in K(L_r;E_1,E_2)$, so that we have an equality
$p^l y=N_{L_r/K}(v)$. By the previous step, we get 
\[p^{l+n+r}y=N_{L_r/K}(p^{n+r}v)\equiv 0 \text{ mod }p^{l+n+r+1} N_{L_r/K}(K(L_r;E_1,E_2)).\] Since the $K$-group $K(K;E_1,E_2)$ is $m$-divisible (\cite[Theorem 3.5]{Raskind/Spiess2000}), the above relation holds for every $y\in K(K;E_1,E_2)$.  We can therefore set $N=l+n+r$ to make the statement of the theorem true.

\end{proof}

\vspace{2pt}

\subsection{When one curve is a Tate elliptic curve} In this section we extend our computations to the case when at least one of the two curves is a Tate elliptic curve. When both $E_1, E_2$ are Tate curves, the injectivity of $K(K;E_1,E_2)/n\rightarrow H^2(K,E_1[n]\otimes E_2[n])$ has been proved by Yamazaki (\cite{Yamazaki2005}), for every $n\geq 1$. We therefore assume that $E_1$  is a Tate elliptic curve and $E_2$  has good reduction. 

We want to proceed as in the previous subsections, giving sufficient (and for the Mackey functor necessary) criteria for the map 
\[K(K;E_1,E_2)/p^n\xrightarrow{s_{p^n}} H^2(K,E_1[p^n]\otimes E_2[p^n])\] to be injective. In this case it suffices to consider only the injectivity of $s_p$ when $\mu_p\subset K$ and the $q$-invariant of $E_1$ is not a $p$th power. Indeed, if $q=q'^{p^n}$ for some $q'\in\mathfrak{m}_K$ that is not a $p$th power and some $n\geq 1$, we can replace $E_1$ with the isogenous elliptic curve $E_1'=\G_m/q'^\Z$. Namely, the $p^n$-power map gives an isogeny, $E_1'\xrightarrow{\phi} E_1$, which induces a map on $K$-groups, 
\[K(K;E_1',E_1)\xrightarrow{\phi} K(K;E_1,E_2). \]
The image of $\phi$ is exactly the subgroup $p^nK(K;E_1,E_2)$. We conclude that proving that the group $p^nK(K;E_1,E_2)$ is $p$-divisible is equivalent to proving that the $K$-group $K(K;E_1',E_2)$ is $p$-divisible.

We start by fixing a uniformizer $\pi_K$ of $K$. The analogue of \autoref{ordord} and \autoref{ordss} in this case is given by the following proposition.  
\begin{prop}\label{Tate1}  Let $E_1$, $E_2$ be elliptic curves over $K$ such that $E_1$ is a Tate curve and $E_2$ has good reduction. Assume $\mu_p\subset K$ and $\hat{E}_2[p]\subset E_2(K)$. Moreover, suppose that  $E_1$ has invariant $q\in\mathfrak{m}_K$ such that $\sqrt[p]{q}\not\in K$. 
\begin{enumerate}
\item If $E_2$ has ordinary reduction, then a necessary and sufficient condition for the injectivity of the Mackey functor Galois symbol 
\[(E_1\otimes^M E_2)(K)/p\xrightarrow{s_p} H^2(K,E_1[p]\otimes E_2[p])\] is that $q\in\mathfrak{m}_K^i\setminus\mathfrak{m}_K^{i+1}$ for some $i$ coprime to $p$. 
\item If $E_2$ has supersingular reduction with $E_2[p]\subset E_2(K)$ and $E_2$ has invariant $t(K)$, then a necessary and sufficient condition for the injectivity of the Mackey functor Galois symbol $s_p$ is that $q$ can be written as $q=\pi_K^iu$, with either $i$ coprime to $p$, or $u\in \overline{U}_K^j\setminus\overline{U}_K^{j+1}$ with $j\leq\min\{pt(K),p(e_0(K)-t(K))\}$. 
\end{enumerate} In the above cases, the $K$-group $K(K;E_1,E_2)$
 is $p$-divisible. 
\end{prop}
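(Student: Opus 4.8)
The plan is to follow the strategy of \autoref{ordord}--\autoref{kernelss}: replace $E_1$ by a quotient of $\G_m$, reduce the generalized symbol to the classical Galois symbol $g_p\colon K_2^M(K)/p\to Br(K)[p]\cong\Z/p$ via the decompositions of \autoref{decomposition}, and then decide (non)triviality of the relevant symbols through the norm-residue criterion and, for the vanishing statement, local Tate duality. The first observation is that for a Tate curve $E_1=\G_m/q^{\Z}$ one has, refining \autoref{decomposition}, an isomorphism of Mackey functors $E_1/p\cong(\G_m/p)/\langle q\rangle$, where $\langle q\rangle\subset\G_m/p$ is the Mackey subfunctor generated by the Tate parameter $q\in K^\times$; this follows from the uniformization and the compatibility of the Kummer maps, and when $\sqrt[p]{q}\in K$ it recovers $E_1/p\cong\G_m/p$. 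By right exactness of $\otimes^M$ it follows that $(E_1\otimes^M E_2)/p\cong(\G_m/p\otimes^M E_2/p)/(\langle q\rangle\otimes^M E_2/p)$; concretely one forms $\G_m/p\otimes^M E_2/p$ and kills the symbols $\{q,b\}$. Using the good reduction of $E_2$ — the Mackey sequence $0\to\hat{E}_2\to E_2\to[E_2/\hat{E}_2]\to0$, surjectivity of the unramified norm on $\mathcal{O}^\times$ and on $E_2$ (\cite[Corollary 4.4]{Mazur1972}), and the identification $\hat{E}_2/p\cong\overline{U}^0$ from \autoref{formalgroup} — together with \autoref{decomposition}, one reduces, exactly as in the proofs of \autoref{ordord} and \autoref{ordss}, to computing $g_p$ on the symbols $\{q,b\}$ with $b$ in the graded pieces of $E_2/p$ (over $\overline{U}^0$ and $\overline{U}^{pe_0(K)}$ in case (1), and over $\overline{U}^{pt(K)}$ and $\overline{U}^{p(e_0(K)-t(K))}$ in case (2)). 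Injectivity of $s_p$ then amounts to the statement that the subgroup generated by these symbols exhausts the kernel of the composite $(\G_m/p\otimes^M E_2/p)(K)\twoheadrightarrow(E_1\otimes^M E_2)(K)/p\xrightarrow{s_p}H^2(K,E_1[p]\otimes E_2[p])$.

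The computation now rests on the boxed criterion, $g_p(\{q,b\})=0\iff q\in N_{K(\sqrt[p]{b})/K}(K(\sqrt[p]{b})^\times)$, together with the facts that the Kummer extension attached to the deepest level $\overline{U}^{pe_0(K)}$ is unramified, while those attached to intermediate levels $\overline{U}^{j}$ with $j$ coprime to $p$ and $0<j<pe_0(K)$ are totally, wildly ramified. For sufficiency: if $v(q)$ is coprime to $p$, then $q$ is a uniformizer modulo $p$th powers and so fails to be a norm from the unramified extension attached to $\overline{U}^{pe_0(K)}$ (a valuation count) and, by \cite[p. 86, Corollary 7]{serre1979local}, also from a suitable ramified extension attached to $\overline{U}^0$; in case (1) this makes $\{q,b\}$ nontrivial in both graded directions, so the subgroup it generates exhausts the kernel above and $s_p$ is injective. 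In case (2), where the two pieces are $\overline{U}^{pt(K)}$ and $\overline{U}^{p(e_0(K)-t(K))}$, one additionally needs, when $p\mid v(q)$ and $q=\pi_K^{v(q)}u$, that the depth $j$ of $u$ satisfy $j\le\min\{pt(K),p(e_0(K)-t(K))\}$ — this being exactly the range in which $\{u,b\}$ can be made nontrivial in each piece, via a Hasse--Herbrand computation in the spirit of \autoref{extendzeta} combined with \cite[Lemma 3.4]{Hiranouchi2014}. Conversely, when the stated condition fails — for instance in case (1) with $p\mid v(q)$, so that $q$ is, modulo $p$th powers, a unit and hence a norm from the unramified extension attached to $\overline{U}^{pe_0(K)}$, making $\{q,b\}$ vanish there — the subgroup generated by the $\{q,b\}$ is strictly smaller than that kernel, so $s_p$ is not injective; a local Tate duality argument modeled on \autoref{kernelss}, computing the orthogonal complement of $\img(s_p)$ by the method of \cite{Gazaki2017}, exhibits the surviving $\Z/p$ on which $s_p$ vanishes.

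For the last assertion — that $K(K;E_1,E_2)$ is $p$-divisible whenever the condition on $q$ holds — one invokes the Weil-reciprocity relations of the Somekawa $K$-group, absent from the Mackey product: applying them to rational functions pulled back along the uniformization $\G_m\to\G_m/q^{\Z}=E_1$, in the manner of Yamazaki's treatment of Tate curves (\cite{Yamazaki2005}), annihilates the residual cyclic part on which $s_p$ was injective, so $K(K;E_1,E_2)/p=0$. I expect the main obstacle to be the level-by-level ramification bookkeeping in case (2): deciding which symbols $\{q,b\}$ survive under $g_p$ in each graded piece forces careful use of Takemoto's Hasse--Herbrand formulas (compare \autoref{extendzeta}), and it is this computation that both produces the threshold $\min\{pt(K),p(e_0(K)-t(K))\}$ and explains why case (1) admits no analogous ``deep unit'' alternative. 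A secondary difficulty is that the orthogonal-complement description of $\img(s_p)$ in \cite{Gazaki2017} is set up for good reduction, so in the necessity direction one must re-run the finite-flat-group-scheme argument with the N\'{e}ron model of the Tate curve — whose special fibre is toric — replacing $\mathcal{E}_1[p]$ by its connected part $\mu_p$.
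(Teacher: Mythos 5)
Most of your outline tracks the paper's actual proof: one writes $E_1/p\simeq(\G_m/p)/\langle q\rangle$, uses right-exactness of $\otimes^M$ to get a surjection $(\G_m/p\otimes^M E_2/p)(K)\twoheadrightarrow(E_1\otimes^M E_2)(K)/p$, decides everything through the classical symbol $g_p$ and the norm criterion, and in the necessity direction shows $s_p=0$ by Tate duality (the paper computes the orthogonal complement of $\img(s_p)$ as $\{f:f(\mu_p)=0\}$ via the commutative diagram with $\G_m$ and the surjectivity of the top symbol from \cite{Bloch1981}, rather than re-running the finite-flat argument of \cite{Gazaki2017} for the semistable model, which sidesteps the difficulty you flag). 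The genuine gap is in your last paragraph. Under the stated condition on $q$, your own sufficiency argument shows that symbols $\{q,b\}_{K/K}$ already generate \emph{all} of $(\G_m/p\otimes^M E_2/p)(K)$ (both cyclic pieces), and these symbols die in $(E_1\otimes^M E_2)(K)/p$; hence the Mackey product mod $p$ is zero. That is exactly what the paper proves, and it gives injectivity (vacuously) \emph{and} the $p$-divisibility of $K(K;E_1,E_2)$ at once, since $K(K;E_1,E_2)/p$ is a quotient of $(E_1\otimes^M E_2)(K)/p$. Your proposed route to $p$-divisibility --- Weil reciprocity \`{a} la \cite{Yamazaki2005} ``annihilating the residual cyclic part on which $s_p$ was injective'' --- cannot work: the symbols factor compatibly as $(E_1\otimes^M E_2)(K)/p\twoheadrightarrow K(K;E_1,E_2)/p\rightarrow H^2(K,E_1[p]\otimes E_2[p])$, so if the Mackey product were a nonzero group on which $s_p$ is injective, its nonzero image in $H^2$ would equal the image of the Somekawa symbol and $K(K;E_1,E_2)/p$ could not vanish. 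Either the Mackey product mod $p$ is already zero (the actual situation, in which Weil reciprocity is not needed), or $p$-divisibility genuinely fails; there is no intermediate case for the reciprocity relation to repair.

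A secondary point, for case (1): you reduce to the graded pieces $\overline{U}^0$ and $\overline{U}^{pe_0(K)}$ of $E_2/p$, but that splitting requires $E_2[p]\subset E_2(K)$, which is not assumed there, and the extension achieving it may be wildly ramified (nontrivial Serre--Tate parameter); over such an extension the norm trick is unavailable and $v(q)$ gets multiplied by $p$, destroying the hypothesis that $i$ is coprime to $p$. The paper instead passes only to the unramified extension trivializing $\hat{E}_2[p]$ and works with the connected--\'{e}tale sequence of Mackey functors $\overline{U}^0\rightarrow E_2/p\rightarrow[E_2/\hat{E}_2]/p\rightarrow 0$ (so the two groups to be generated by $\{q,x\}_{K/K}$ are $(\G_m/p\otimes^M\overline{U}^0)(K)$ and $(\G_m/p\otimes^M[E_2/\hat{E}_2]/p)(K)$); since you already invoke this sequence, \autoref{formalgroup} and \cite[Corollary 4.4]{Mazur1972}, the fix is simply not to route through the full splitting.
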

\begin{proof} The two cases are very similar, therefore we only prove the proposition when $E_2$ has ordinary reduction. 
 We first prove that, if $q\in\mathfrak{m}_K^i\setminus\mathfrak{m}_K^{i+1}$ for some $i$ coprime to $p$, then $K(K;E_1,E_2)$ is $p$-divisible. 

We have an exact sequence of Mackey functors $\G_m/p\xrightarrow{\varepsilon} E_1/p\rightarrow 0$. Since the functor   $\otimes^M E_2/p$ is right exact,  we get a surjection, 
$(\G_m/p\otimes^M E_2/p)(K)\xrightarrow{\varepsilon} (E_1/p\otimes^M E_2/p)(K)$. We want to show that the group $(E_1/p\otimes^M E_2/p)(K)$ vanishes. 
 Since we assumed that $\sqrt[p]{q}\not\in K$, the above map $\varepsilon$ has a kernel generated by the image of $q$. We can proceed similarly to the proof of the second part of  \autoref{ordord}. Namely, we will show that the group $(\G_m/p\otimes^M E_2/p)(K)$ is generated by symbols of the form $\{q,x\}_{K/K}$ with $x\in E_2(K)$. Since we assumed that $\hat{E}_2[p]\subset E(K)$, we have an exact sequence of Mackey functors,
\[\overline{U}^0\rightarrow E_2/p\rightarrow [E_2/\hat{E}_2]/p\rightarrow 0,\] where $[E_2/\hat{E}_2]$ is the Mackey functor defined in \autoref{ordord}. Since $\G_m/p\otimes^M$ is right exact, we obtain an exact sequence 
\begin{eqnarray}\label{ordtate1}\G_m/p\otimes^M\overline{U}^0\rightarrow \G_m/p\otimes^M E_2/p\rightarrow \G_m/p\otimes^M[E_2/\hat{E}_2]/p\rightarrow 0.\end{eqnarray} It suffices therefore to show that the groups $(\G_m/p\otimes^M\overline{U}^0)(K)$ and $(\G_m/p\otimes^M[E_2/\hat{E}_2]/p)(K)$ can be generated by symbols of the form $\{q,x\}_{K/K}$ with $x\in \overline{U}^0_K$ and $x\in[E_2/\hat{E}_2]/p(K)$, respectively. 

By \cite[Lemma 3.3]{Hiranouchi2014}, $(\G_m/p\otimes^M\overline{U}^0)(K)$ is isomorphic to $\Z/p$ via the classical Galois symbol $g_p$. Since we assumed that $q\in\mathfrak{m}_K^i\setminus\mathfrak{m}_K^{i+1}$ for some $i$ coprime to $p$, we can write $q=\pi_K^i v$, for some unit $v\in U_K^0$.  We consider the extension $L=K(\sqrt[p]{\pi_K})$. According to \cite[Lemma 2.2 (1)]{takemoto}, this extension is totally ramified of degree $p$ whose Galois group $\Gal(L/K)$ has a jump at the ramification filtration at $pe_0(K)$. By \cite[p. 86, Corollary 7]{serre1979local},  there exists $x\in U^{pe_0(K)}_K$ such that $g_p(\{\pi_K,x\}_{K/K})\neq 0$. Therefore, $\{\pi_K,x\}_{K/K}$ generates $(\G_m/p\otimes^M\overline{U}^{0})(K)$. We claim  that $\{q,x\}_{K/K}$ also generates $(\G_m/p\otimes^M\overline{U}^0)(K)$. Indeed, observe that $\{q,x\}_{K/K}=i\{\pi_K,x\}_{K/K}+\{v,x\}_{K/K}$, and, since $K(\sqrt[p]{x})/K$ is unramified,    $\{v,x\}_{K/K}=0$. Since $i$ is coprime to $p$ and $\{\pi_K,x\}_{K/K}\neq 0$, we get $\{q,x\}_{K/K}\neq 0$. It follows that $\{q,x\}_{K/K}$ generates $(\G_m/p\otimes^M\overline{U}^0)(K)$.

The computation for $(\G_m/p\otimes^M[E_2/\hat{E}_2]/p)(K)$ is similar, so we omit it.

\medskip
We next want to show that the condition on the invariant $q$ is necessary for the injectivity of $s_p$ at the level of the Mackey product, $(E_1/p\otimes^M E_2/p)(K)$. We assume that $q=\pi_K^{p^s}v$, for some $s\geq 1$ and some unit $v\in \mathcal{O}_K^\times$ that is not a $p$th power. In this case we claim that the group $(E_1\otimes^M E_2)/p$ contains $\Z/p$, while $s_p=0$. 
The proof is very similar to the proofs of \autoref{kernelord} and \autoref{kernelss}, so we only sketch the argument here. 

The first claim follows by the sequence \eqref{ordtate1}. Namely, we can show that the  map \[(\langle q\rangle\otimes^M E_2/p)(K)\rightarrow (\G_m/p\otimes^M[E_2/\hat{E}_2]/p)(K)\] vanishes. On the other hand, the group $(\G_m/p\otimes^M\overline{U}^0)(K)$ can be generated by a symbol of the form $\{q,x\}_{K/K}$, unless the unit $v$ is in $\overline{U}_K^{pe_0(K)}$. 

To show that the map $s_p$ vanishes, we need to compute again the orthogonal complement of $\img(s_p)$ under Tate duality. We claim that this complement coincides with the following subgroup of $\Hom_{G_K}(E_1[p],E_2[p])$,
$\{f\in\Hom_{G_K}(E_1[p],E_2[p]):f(\mu_p)=0\}.$ The claim follows by the following commutative diagram,
\[\begin{tikzcd}
(\G_m/p\otimes^M E_2/p)(K)\ar{r}{s_p}\ar{d}{\varepsilon} & H^2(K, \mu_p\otimes E_2[p])\ar{d}\\
(E_1/p\otimes^M E_2/p)(K)\ar{r}{s_p} & H^2(K, E_1[p]\otimes E_2[p])
\end{tikzcd} \]
We already saw that the map $\varepsilon$ is surjective. Note that the top $s_p$ is also surjective. This follows by \cite[Theorem 2.9]{Bloch1981}. The rest of the argument is the same as in \autoref{kernelss}. 
 
\end{proof}
We end this section with the analogue of \autoref{ordsszeta}. 
\begin{cor} Let $E_1$ be a Tate curve and $E_2$ a supersingular reduction elliptic curve.  Then the Albanese kernel of the product $X=E_1\times E_2$ is the direct sum of a finite group and a divisible group. 
\end{cor}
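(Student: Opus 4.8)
The plan is to follow the proof of \autoref{ordsszeta} almost verbatim, with \autoref{Tate1}(2) playing the role of \autoref{ordss} and the isogeny trick from the beginning of this subsection replacing the passage to $p$-power torsion points of a formal group. By the theorem of Raskind and Spiess we have $T(X)\simeq K(K;E_1,E_2)$, and each quotient $K(K;E_1,E_2)/p^i$ is finite; hence it is enough to produce an integer $N\geq 1$ for which $p^N K(K;E_1,E_2)$ is $p$-divisible. Together with the $m$-divisibility of this group for $m$ coprime to $p$ (\cite[Theorem 3.5]{Raskind/Spiess2000}), this makes $p^N K(K;E_1,E_2)$ divisible, hence a direct summand with finite complement, which is the assertion of the corollary. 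Exactly as in the last paragraph of the proof of \autoref{ordsszeta}, the divisibility statement is insensitive to a finite base change: if $p^{N'}K(L;E_{1,L},E_{2,L})$ is $p$-divisible for some finite $L/K$, applying $N_{L/K}$ and using $N_{L/K}\circ\res_{L/K}=[L:K]$ together with the $m$-divisibility produces such an $N$ over $K$.

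First I would pass to a finite extension $K_1/K$ with $\mu_{p^2}\subset K_1$ and $E_2[p]\subset E_2(K_1)$ (so that both \autoref{extendzeta} and \autoref{Tate1} are available) and, writing the $q$-invariant of $E_1$ as a maximal $p$-power $q=q_0^{p^{n_0}}$ with $q_0\in\mathfrak{m}_{K_1}$ not a $p$th power, replace $E_1$ by the isogenous Tate curve $\G_m/q_0^{\Z}$; since by the discussion opening this subsection it then suffices to prove that $K(K_1;E_1,E_2)$ is $p$-divisible, we may and do assume that the $q$-invariant of $E_1$ is not a $p$th power. Write $q=\pi_{K_1}^{i}u$ with $u\in\mathcal{O}_{K_1}^\times$. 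If $i$ is coprime to $p$, or if $u\in\overline{U}_{K_1}^{j}\setminus\overline{U}_{K_1}^{j+1}$ with $j\leq\min\{pt(K_1),p(e_0(K_1)-t(K_1))\}$, then \autoref{Tate1}(2) already gives that $K(K_1;E_1,E_2)$ is $p$-divisible, and we are done.

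Otherwise $i$ is divisible by $p$, and I would run an algorithmic tower in the spirit of \autoref{ordsszeta} and \autoref{tower1}. If the jump $j$ of the unit $u$ is coprime to $p$, the extension $L_1=K_1(\sqrt[p]{u})$ is totally ramified of degree $p$; over $L_1$ the invariant $q$ becomes a $p$th power, so the isogeny trick replaces $E_1$ by a Tate curve whose invariant has unit part $\sqrt[p]{u}$, which still lies in $\overline{U}_{L_1}^{j}\setminus\overline{U}_{L_1}^{j+1}$ by \autoref{extendzeta} and \cite[Theorem 2.1.6]{kawachi2002}, while $e_0(L_1)=pe_0(K_1)$ and $t(L_1)=pt(K_1)$; thus the bound $\min\{pt,p(e_0-t)\}$ has grown by the factor $p$ while $j$ is unchanged. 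If instead the only available jump of $u$ is $j=pe_0(K_1)$, then $L_1=K_1(\sqrt[p]{u})$ is unramified and the same move divides $v(q)$ by $p$ without changing $e_0$ or $t$. In either case a numerical invariant strictly improves, and, as in \autoref{tower1}, after finitely many such extensions wild ramification (hence a jump coprime to $p$) is reached and the hypotheses of \autoref{Tate1}(2) hold; so over some finite tower $K_1\subset L_1\subset\cdots\subset L_r$ the group $K(L_r;E_1,E_2)$ is $p$-divisible, and descending by the norm as above yields the desired $N$.

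Since the supersingular case of \autoref{Tate1} is only sketched, the main obstacle will be the bookkeeping of this case analysis — on $v_{K_1}(q)\bmod p$ and on the jump of the unit part of $q$, including the borderline case $t(K_1)=e_0(K_1)$ in which the bound $\min\{pt,p(e_0-t)\}$ degenerates — and the verification that the algorithm terminates; the cohomological input, namely the Tate-duality description of $\img(s_p)$ for a product of a Tate curve and a supersingular curve, is already contained in the proof of \autoref{Tate1}.
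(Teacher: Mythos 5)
Your skeleton matches the paper's: reduce via the Raskind--Spiess decomposition to producing an $N$ with $p^N K(K;E_1,E_2)$ $p$-divisible, reach that over a finite tower by an algorithm in the spirit of \autoref{ordsszeta}, then descend by the norm exactly as in the last paragraph of that proof. Where you genuinely diverge is the tower itself. The paper keeps $E_1$ and its parameter $q$ untouched and adjoins the roots of unity $\zeta_{p^n}$, regarded as torsion points of the Tate curve: since $\zeta_{p^{n+1}}=\sqrt[p]{\zeta_{p^n}}$, \autoref{extendzeta} says the jump of the adjoined root of unity in the unit filtration is literally constant along the cyclotomic tower, while $\min\{pt,p(e_0-t)\}$ is multiplied by $p$ at each totally ramified step, so after finitely many steps the criterion needed for \autoref{Tate1}/\autoref{vanishing} holds with no isogeny replacements and no rewriting of $q$. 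You instead adjoin $p$-th roots of the unit part of $q$ and repeatedly pass to isogenous quotient Tate curves so that the literal hypothesis of \autoref{Tate1}(2) on the parameter eventually holds; this is closer in flavor to the formal-group tower of \autoref{ordsszeta}, but it buys you exactly the bookkeeping you flag.

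One step of that bookkeeping is over-claimed as written. After adjoining $L_1=K_1(\sqrt[p]{u})$ and applying the isogeny trick, the new parameter is $q_1=\pi_{K_1}^{i/p}\sqrt[p]{u}$, whose unit part with respect to a uniformizer $\pi_{L_1}$ is $\bigl(\pi_{K_1}\pi_{L_1}^{-p}\bigr)^{i/p}\sqrt[p]{u}$, not $\sqrt[p]{u}$; \autoref{extendzeta} controls only the second factor, so "the jump is unchanged" does not follow as stated. It can be repaired: since $i\equiv 0 \bmod p$ the class of the unit part is independent of the choice of uniformizer, and choosing $\pi_{K_1}$ to be a norm from $L_1$ places the extra factor in $\overline{U}_{L_1}^{pe_0(K_1)-j}$ or deeper, so for $p$ odd (where the two jumps $j$ and $pe_0(K_1)-j$ cannot coincide, $j$ being coprime to $p$) the jump of the new unit part either stays $j$ or drops to a smaller value coprime to $p$ -- still enough for termination, but this must be argued, along with the $p=2$ cancellation case and the degenerate situation $t=e_0-t$ you mention. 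In short: correct strategy and a workable but genuinely different tower; the paper's cyclotomic tower avoids precisely this parameter-rewriting, which is why its proof reduces to citing \autoref{extendzeta} and \autoref{ordsszeta}, while yours needs the extra unit-class lemma above to close the ramified branch.
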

\begin{proof}  By \cite[Theorem 3.5]{Raskind/Spiess2000} we have an isomorphism, $T(E_1\times E_2)\simeq D\oplus F$, where $D$ is a group that is $m$-divisible for every integer $m$ coprime to $p$ and $F$ is a finite group. Let $M$ be the order of $F$. It suffices to show that the group $M\cdot T(E_1\times E_2)$ is the direct sum of a $p$-divisible group and a finite group. To do this we imitate the proof of \autoref{ordsszeta}, which applies almost verbatim. In fact it becomes even easier, since we can construct a tower of finite extensions, $L_r\supset \cdots\supset L_1\supset K$, by attaching roots of unity, $\{\zeta_{p^n}:n_0\leq n\leq n_0+r\}$, which are considered as torsion points of the Tate curve $E_1$. \autoref{extendzeta} reassures that after attaching a finite number of roots of unity the assumptions of \autoref{Tate1} will eventually hold over $L_r$. The rest of the proof is exactly the same as \autoref{ordsszeta} but now applied to the $m$-divisible group $D$.

\end{proof}

\vspace{1pt}

\subsection{The product of more than two curves} 
To finish the proof of theorems \eqref{maintheoremintro} and \eqref{mainintro2}, we need to consider also the case of the $K$-group $K(K;E_1,\cdots, E_r)$ attached to more than two elliptic curves. Everything will follow from the next corollary, which  generalizes previous computations of Raskind and Spiess (\cite[Theorem 4.5, Remark 4.4.5]{Raskind/Spiess2000}) and Hiranouchi (\cite[Proposition 4.3]{Hiranouchi2014}).
\begin{cor}\label{morecurves}
 Let $E_1,\cdots, E_r$ be elliptic curves over $K$ with split semistable reduction. Assume that $E_1$ does not have supersingular reduction and if $E_i$ has supersingular reduction for some $i\geq 2$, then $E_i[p]\subset E_i(K)$.  If $r\geq 3$, the $K$-group $K(K;E_1,\cdots, E_r)$ is $p$-divisible. 
\end{cor}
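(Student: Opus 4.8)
The plan is to reduce the $r$-fold symbol to the two-fold case, which has been handled in all the relevant reduction types by the previous results of this paper together with \autoref{Hiranouchiinjec} (for supersingular curves with full $K$-rational $p$-torsion) and Yamazaki's theorem (for Tate curves). The key structural input is the projection formula $R_1$ of the Mackey product and the surjectivity of various norm maps. Concretely, I would first observe that for any $r\geq 3$ there is a natural surjection onto $K(K;E_1,\dots,E_r)/p$ from a ``partial'' product in which the last $r-1$ coordinates are grouped together: one picks out two of the coordinates — say $E_1$ (which is not supersingular) and one other curve $E_j$ — and studies the interior symbol $\{a_1,a_j\}$ inside $K(K;E_1,E_j)$ over every finite extension $L/K$ simultaneously. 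Since by \autoref{ordord}, \autoref{ordss}, \autoref{ordsszeta}, \autoref{Tate1} (and their corollaries) and the $p$-divisibility statements contained therein, the relevant two-fold $K$-groups become $p$-divisible after a controlled finite extension, every generator $\{a_1,a_2,\dots,a_r\}_{L/K}$ of $K(K;E_1,\dots,E_r)/p$ can, after passing up a tower, be rewritten as $p$ times something; one then pushes back down via a norm map.

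The cleanest way to organize this: let me write $\mathcal G = E_2/p\otimes^M\cdots\otimes^M E_r/p$, a fixed Mackey functor. Then $K(K;E_1,\dots,E_r)/p$ is a quotient of $(E_1/p\otimes^M \mathcal G)(K)$, so it suffices to show the latter group is zero — or more precisely, to show that the image of the $r$-fold symbol map in it is killed. I would run the same engine as in the proof of the $n=0$ case of \autoref{ordord}: using \autoref{formalgroup} and the connected–étale / $p$-adic uniformization decompositions of $E_1/p$, one gets a surjection from $\overline U^0\otimes^M\mathcal G$ (or $\G_m/p\otimes^M\mathcal G$ if $E_1$ is a Tate curve) onto $E_1/p\otimes^M\mathcal G$, and the divisibility input on the two-fold groups $K(K;E_1,E_i)$ forces the relevant symbols to vanish. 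The point is that in $\mathcal G$ there is always ``room'' — for instance if any $E_i$ ($i\ge 2$) has good reduction, the norm map $E_i(F')\to E_i(F)$ is surjective for $F'/F$ unramified (Mazur), and if $E_i$ is a Tate curve one uses surjectivity of the norm on $\G_m$ up unramified extensions, exactly as in the two-fold arguments; these surjectivities let one absorb an unramified extension at will and conclude a symbol is $p$-divisible.

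I would then make the reduction to a finite extension precise by the standard norm argument already used in \autoref{ordsszeta}: choose a finite extension $L/K$ (combining the finite extensions needed to apply \autoref{ordord}/\autoref{ordss}/\autoref{Tate1} to the pair $(E_1,E_i)$, together with enough roots of unity as dictated by \autoref{extendzeta}) over which the relevant two-fold $K$-groups are $p$-divisible; show $K(L;E_{1,L},\dots,E_{r,L})/p=0$ by the argument above; and finally descend using $N_{L/K}\circ\res_{L/K}=[L:K]$ together with the fact (\cite[Theorem 3.5]{Raskind/Spiess2000}) that $K(K;E_1,\dots,E_r)$ is already $m$-divisible for $m$ coprime to $p$, so that multiplication by $[L:K]$ on $K(K;\dots)/p^s$ is essentially multiplication by a power of $p$. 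Iterating, $p^NK(K;E_1,\dots,E_r)$ is $p$-divisible for some $N$; but for $r\ge 3$ one expects in fact $N=0$, because the extra coordinate(s) supply enough projection-formula relations (this is the heuristic recorded in the remark after \autoref{conjecture} and already visible in Raskind–Spiess's $r\ge 3$ computation) — one gets this by noting that a symbol $\{a_1,\dots,a_r\}$ with some $a_i$ a torsion point, or with some $a_i$ lying in a formal group, can be killed outright, and the generating arguments produce generators of exactly this shape.

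The main obstacle I anticipate is the bookkeeping of which finite extension to pass to so that \emph{all} the pairwise conditions hold simultaneously, and checking that grouping $r-1$ coordinates into a single Mackey functor $\mathcal G$ does not destroy the surjectivity-of-norm properties that the two-fold proofs relied on; in particular one must be careful when more than one of $E_2,\dots,E_r$ is supersingular, since then the convenient surjectivity of the norm up unramified extensions is not available for those factors — but the hypothesis $E_i[p]\subset E_i(K)$ for supersingular $E_i$, combined with the fact that $E_1$ is \emph{not} supersingular (so the reduction can always be driven through the $E_1$ coordinate), is exactly what rescues the argument, and this is presumably why the hypotheses are stated the way they are.
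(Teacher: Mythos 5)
Your reduction target --- showing that the mod-$p$ Mackey product $(E_1/p\otimes^M\cdots\otimes^M E_r/p)(K)$ vanishes --- is the same as the paper's, but the engine you propose does not prove it and misses the key input. You want to leverage $p$-divisibility of the two-fold groups $K(K;E_1,E_i)$, yet by \autoref{image1} these are in general \emph{not} $p$-divisible over $K$ (they are $\Z/p^n$ or $\Z/p^n\oplus\Z/p^n$ when the torsion is rational); they only become $p$-divisible after a suitable finite extension, and in the supersingular case (\autoref{ordsszeta}) one only obtains that $p^N K$ is $p$-divisible. Descending through such an extension with $N_{L/K}\circ\res_{L/K}=[L:K]$ then yields, exactly as in \autoref{ordsszeta}, only that $p^N K(K;E_1,\cdots,E_r)$ is $p$-divisible for some $N$, which is strictly weaker than the corollary. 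Your closing claim that one should in fact have $N=0$ because ``the extra coordinates supply enough projection-formula relations'' is precisely the assertion that needs proof, and no argument is given for it.

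The paper's proof is genuinely different and never uses the two-fold divisibility theorems. After reducing to $r=3$ by associativity of $\otimes^M$ and using surjectivity of norms for unramified and prime-to-$p$ extensions (so one may assume $\mu_p\subset K$ and $\hat{E}_i[p]\subset E_i(K)$ in the ordinary cases), it decomposes each factor $E_i/p$ into unit-group Mackey functors via \autoref{decomposition} and \autoref{formalgroup} (or uses the surjection from $\G_m/p$ for a Tate factor), so that $(E_1\otimes^M E_2\otimes^M E_3)/p$ is a quotient of direct sums of products of the form $\overline{U}^0\otimes^M\overline{U}^0\otimes^M\overline{U}^0$, using the isomorphism $\overline{U}^0\otimes^M\overline{U}^s\simeq\overline{U}^0\otimes^M\overline{U}^0$ of \cite[Lemma 3.3]{Hiranouchi2014} to absorb the supersingular pieces. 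The decisive input is then the vanishing of the \emph{triple} product $\overline{U}^0\otimes^M\overline{U}^0\otimes^M\overline{U}^0=0$ from \cite[Lemma 4.2.1]{Raskind/Spiess2000}. This triple-product vanishing is the idea missing from your proposal: it is what makes a third coordinate decisive and gives vanishing of the mod-$p$ Mackey product over $K$ itself, hence honest $p$-divisibility of $K(K;E_1,\cdots,E_r)$ with no residual exponent $N$ left to dispose of.
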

\begin{proof} 
We will show that the Mackey product, $(E_1/p\otimes^M\cdots\otimes^M E_r/p)(K)=0$. Using associativity of the product, it suffices to prove the claim when $r=3$. Moreover, using the surjectivity of the norm, we may assume that $\mu_p\subset K$ and if $E_i$ has ordinary reduction for some $i\in\{2,3\}$, then $\hat{E}_i[p]\subset E_i(K)$. 

We will prove the corollary in the following two specific cases to illustrate the method. Any other case can be proved in a very analogous way.
\begin{itemize}
\item Assume that all three curves have good ordinary reduction. 
Imitating the proof of \autoref{ordord}, we can prove a surjection of Mackey functors,  
\[\overline{U}^0\otimes^M\overline{U}^0\otimes^M\overline{U}^0\rightarrow (E_1\otimes^M E_2\otimes^M E_3)/p\rightarrow 0.\] The claim then follows after we observe that the functor $\overline{U}^0\otimes^M\overline{U}^0\otimes^M\overline{U}^0$ vanishes by \cite[Lemma 4.2.1]{Raskind/Spiess2000}. 
\item Assume that $E_1$ has ordinary reduction and $E_2, E_3$ have supersingular reduction. Since we assumed that $E_i[p]\subset E_i(K)$ for $i=2,3$, we can use \autoref{decomposition} to compute the Mackey functors $E_i/p$. We consider the product $(E_1\otimes^M E_2)/p$. By \autoref{ordss} and \autoref{kernelss} we get that this product is either 0 or isomorphic to a direct sum of Mackey functors of the form $\overline{U}^0\otimes^M\overline{U}^s$ for some $s>0$. By \cite[Lemma 3.3]{Hiranouchi2014} we get an isomorphism,
\[\overline{U}^0\otimes^M\overline{U}^s\simeq(\G_m\otimes^M\G_m)/p\simeq \overline{U}^0\otimes^M\overline{U}^0.\] Therefore the Mackey functor, $(E_1\otimes^M E_2\otimes^M E_3)/p$ can be decomposed in direct pieces that look like $\overline{U}^0\otimes^M\overline{U}^0\otimes^M E_3/p$. By imitating the argument for the product $\overline{U}^0\otimes^M E_3/p$, the claim follows once again by \cite[Lemma 4.2.1]{Raskind/Spiess2000}. 
\end{itemize}
	

\end{proof}
\vspace{1pt}

\vspace{1pt}
\subsection{The Brauer-Manin pairing} We close this section by discussing a corollary about the Brauer-Manin pairing, 
\[CH_0(X)\times Br(X)\rightarrow\Q/\Z\] between the group $CH_0(X)$ and the Brauer group, $Br(X)$ of $X$.  This corollary follows directly by the work of Yamazaki (\cite[Theorem 1.2]{Yamazaki2005}). 
\begin{cor}\label{Brauer-Manin} Let $E_1,\cdots,E_r$ be elliptic curves over a $p$-adic field $K$ with good reduction and such that at least one does not have supersingular reduction. Let $X=E_1\times\cdots\times E_r$. There is a finite extension $L$ of $K$ such that the left kernel of the Brauer Manin pairing 
\[CH_0(X\times_K L)\times Br(X\times_K L)\rightarrow\Q/\Z\] is the maximal divisible subgroup of $CH_0(X\times_K L)$. 
\end{cor}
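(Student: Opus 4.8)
The statement is a corollary of Yamazaki's description of the left kernel of the Brauer--Manin pairing in terms of the cycle map, combined with \autoref{mainintro2} (equivalently \autoref{tower1} plus \autoref{morecurves}). First I would recall the precise statement of \cite[Theorem 1.2]{Yamazaki2005}: for a smooth projective variety $X$ over a $p$-adic field, the left kernel of the pairing $CH_0(X)\times Br(X)\to\Q/\Z$ is controlled by the images of the cycle maps $c_n:CH_0(X)/n\to H^{2d}_{\text{\'et}}(X,\mu_n^{\otimes d})$; more precisely, an element of $CH_0(X)$ lies in the left kernel if and only if its image in each $CH_0(X)/n$ lies in $\ker(c_n)$. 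Thus if $c_n$ is injective for all $n\geq 1$, the left kernel is exactly $\bigcap_n n\cdot CH_0(X)$, which (once we know $CH_0(X)$ is the sum of a finite group and a divisible group) is precisely the maximal divisible subgroup of $CH_0(X)$.

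\textbf{Key steps.} The plan is as follows. (1) Reduce to showing the existence of a finite extension $L/K$ over which $c_n$ is injective for every $n\geq 1$ and every $n$; this is the heart of the matter. For $n$ coprime to $p$, injectivity of $c_n$ (even divisibility of the relevant Somekawa groups) follows from \cite[Theorem 3.5]{Raskind/Spiess2000} since at least two of the $E_i$ have good reduction, so after enlarging $K$ we may assume it applies directly. For $n=p^m$, apply \autoref{mainintro2}: there is a finite extension $L$ of $K$ such that $c_{p^m}:CH_0(X_L)/p^m\to H^{2d}_{\text{\'et}}(X_L,\mu_{p^m}^{\otimes d})$ is injective for all $m\geq 1$. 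Here one also invokes Yamazaki's reduction \cite[Proposition 2.4]{Yamazaki2005} of injectivity of $c_n$ to the Somekawa conjecture for the subproducts $K(K;E_{i_1},\dots,E_{i_r})/n$, together with \autoref{morecurves} for the $r\geq 3$ pieces (which gives $p$-divisibility, hence vanishing Galois symbol, hence trivially injective). (2) Over such an $L$, combine the coprime-to-$p$ and $p$-power cases via the filtration of Raskind--Spiess to conclude $c_n$ is injective for all $n\geq 1$. (3) Feed this into Yamazaki's Theorem 1.2 to identify the left kernel of the Brauer--Manin pairing with $\bigcap_{n\geq 1} n\cdot CH_0(X_L)$. (4) Observe that by \autoref{maintheoremintro} (applied over $L$) together with the elementary fact that $CH_0(X_L) = \Z\oplus A_0(X_L)$ and $A_0(X_L)/T(X_L)\hookrightarrow \Alb_{X_L}(L)$ is a finitely generated-by-torsion profinite-ish structure over a $p$-adic field, the group $CH_0(X_L)$ is itself the direct sum of a finite group and a divisible group; hence $\bigcap_{n\geq 1} n\cdot CH_0(X_L)$ equals its maximal divisible subgroup.

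\textbf{Main obstacle.} The only genuine input beyond bookkeeping is \autoref{mainintro2}, which is already proved in the excerpt, so the remaining difficulty is purely one of assembling the pieces correctly: making sure that the single finite extension $L$ can be chosen so that it simultaneously handles all the subproducts $E_{i_1}\times\cdots\times E_{i_r}$ appearing in the Raskind--Spiess filtration (take a common finite extension, e.g.\ the compositum of the finitely many $L$'s produced by \autoref{tower1} for the two-curve subproducts, noting the $r\geq 3$ pieces require no extension by \autoref{morecurves}), and that the coprime-to-$p$ divisibility from \cite[Theorem 3.5]{Raskind/Spiess2000} survives base change. I also need to be slightly careful that Yamazaki's Theorem 1.2 is stated for $CH_0(X)$ and not merely $A_0(X)$, and that the degree map splits off a copy of $\Z$ on which the Brauer pairing is trivial, so that the left kernel is indeed the maximal divisible subgroup of the whole of $CH_0(X_L)$ and not just of $A_0(X_L)$; this is routine since $\Z$ has no nonzero divisible subgroup and the issue only concerns the divisible part.
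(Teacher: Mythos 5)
Your route is essentially the paper's: the whole content is Yamazaki's Theorem 1.2 together with \autoref{mainintro2} (whose proof already produces one finite extension $L$ working simultaneously for all subproducts, via \autoref{tower1} and \autoref{morecurves}), and the paper's proof is exactly this two-line citation. The paper quotes Yamazaki in the ``iff'' form --- the left kernel of the Brauer--Manin pairing equals the maximal divisible subgroup of $CH_0(X)$ if and only if $c_{p^n}$ is injective for every $n\geq 1$ --- so it never needs your steps (3)--(4) identifying the left kernel with $\bigcap_{n\geq 1} n\cdot CH_0(X_L)$ and then with the divisible part; the prime-to-$p$ input you handle via \cite[Theorem 3.5]{Raskind/Spiess2000} is absorbed into that citation.

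One step of your write-up is wrong as stated: $CH_0(X_L)$ is \emph{not} the direct sum of a finite group and a divisible group. Besides the $\Z$ coming from the degree map, the quotient $A_0(X_L)/T(X_L)\simeq \Alb_X(L)\simeq E_1(L)\times\cdots\times E_r(L)$ is, by Mattuck, isomorphic to $\mathcal{O}_L^{\,r}\oplus(\text{finite})$, which is a profinite-type group with no nonzero divisible subgroup; only $T(X_L)$ decomposes as finite $\oplus$ divisible (by \autoref{maintheoremintro} over $L$). The conclusion you want from step (4), namely $\bigcap_{n}n\cdot CH_0(X_L)=CH_0(X_L)_{\dv}$, is still true, but it needs a small separate argument (e.g.\ writing $T(X_L)\simeq D\oplus F$, one checks that $A_0(X_L)/D$ has finite torsion and trivial first Ulm subgroup, so $\bigcap_n nA_0(X_L)=D$), or is avoided altogether by citing Yamazaki's theorem in the equivalence form used in the paper. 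With that repair (or that citation), your argument matches the paper's proof.
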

\begin{proof} Yamazaki showed the left kernel of the pairing \[CH_0(X)\times Br(X)\rightarrow\Q/\Z\] is the maximal divisible subgroup of $CH_0(X)$ if and only if the cycle map $c_{p^n}$ is injective for every $n\geq 1$. The corollary then follows by \autoref{mainintro2}. 

\end{proof}
\section{Applications over number fields}\label{global-to-local} We close this paper by suggesting a conjecture for varieties defined over algebraic number fields. In this section, $K$ shall denote a number field. We will denote by $\Omega$ the set of all places of $K$.

\begin{conj}\label{almostall} Let $X$ be  smooth projective geometrically connected variety over a number field $K$ such that $X$ has a $K$-rational point. Let $X_v:=X\times_K K_v$ be the base change to the completion, $K_v$, at a finite place $v$ of $K$. If the Albanese kernel, $T(X_v)$, is the direct sum of a divisible group, $D_v$, and a finite group, $F_v$, then $T(X_v)$ is divisible for almost all finite places $v$ of $K$, that is $F_v=0$ for almost all places $v$.  
\end{conj}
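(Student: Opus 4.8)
The natural line of attack, which we now outline, would establish \autoref{almostall} when $X=E_1\times\cdots\times E_d$ is a product of elliptic curves, and isolates the obstructions in the general case. All but finitely many places $v$ of $K$ are places at which every $E_i$ has good reduction; we discard the rest and fix such a $v$, with residue characteristic $p=p(v)$. By the Raskind--Spiess filtration (\cite[Corollary 2.4.1]{Raskind/Spiess2000}), $T(X_v)$ carries a finite filtration whose successive quotients are the Somekawa $K$-groups $K(K_v;E_{i_1},\dots,E_{i_r})$ with $2\le r\le d$. By \cite[Theorem 3.5]{Raskind/Spiess2000} each of these is $\ell$-divisible for every prime $\ell\neq p$, and by \autoref{morecurves} the quotients with $r\ge 3$ are $p$-divisible as well; hence the hypothetical finite summand $F_v$ of $T(X_v)$ is a finite abelian $p$-group, governed entirely by the mod-$p^n$ reductions of the two-factor groups $K(K_v;E_i,E_j)$.

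The key elementary input is that $E_i[p]\subset E_i(K_v)$ forces $\mu_p\subset K_v$ by the Weil pairing, hence $(p-1)\mid e(K_v/\Q_p)\le[K:\Q]$. So for every rational prime $p>[K:\Q]+1$ and every place $v\mid p$, the invariant $n_v:=\max\{n\ge 0:E_i[p^n]\subset E_i(K_v)\text{ for all }i\}$ is $0$. Since all but finitely many places of $K$ lie over such primes, it remains to show: for almost all good-reduction places $v$ with $n_v=0$, the group $T(X_v)$ is divisible. At places where the factors are all good ordinary this is exactly \autoref{ordord} with $n=0$, provided the residue extension $K_v(E_1[p],\dots,E_d[p])/K_v$ is wildly ramified, in which case \autoref{structure1} gives divisibility; wild ramification amounts to ``genericity'' of the Serre--Tate parameters $u_{i,v}\in\mathcal O_{K_v}^\times/(\mathcal O_{K_v}^\times)^p$ in the sense made precise before \autoref{ordord}. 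For CM curves this genericity at almost all ordinary places follows from \autoref{CM1} together with \cite[A.2.4]{Serre1998} --- this is the global-to-local corollary stated in the introduction; in the non-CM case it should follow from the surjectivity of the mod-$p$ Galois representation for almost all $p$ (after Serre), and pinning this down uniformly over the factors is the first genuine point of work.

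I expect the main obstacles to lie elsewhere. First, even for a product of two elliptic curves the \emph{supersingular} places escape the sharp dichotomy: \autoref{ordsszeta} yields only that $p^NK(K_v;E_1,E_2)$ is $p$-divisible for some $N=N(v)$, i.e. a bounded but possibly nonzero $p$-torsion summand, and for CM curves supersingular places have density $\tfrac12$; one would need to check that for almost all such $v$ the $t$-invariant inequalities of \autoref{ordss} hold with $n_v=0$. Worse, a pair of factors that are simultaneously supersingular at infinitely many common places is not covered at all, for the reason explained in the introduction. Second, in the unramified-residue-extension cases one has only the Mackey-functor bound of \autoref{kernelord}, so to deduce divisibility of $T(X_v)$ one must in addition invoke Weil reciprocity to see that the Somekawa group itself vanishes. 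Finally, and most seriously, for a general smooth projective $X$ there is no known substitute for the Raskind--Spiess filtration reducing $T(X_v)$ to $K$-theoretic data, so the strategy does not even begin; one would have to extract a statement uniform in $v$ from the motivic machinery behind the Saito--Sato and Colliot-Th\'{e}l\`{e}ne theorems, and I regard this as the crux of \autoref{almostall}.
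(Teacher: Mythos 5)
You are trying to prove something that the paper itself does not prove: \autoref{almostall} is stated as a conjecture, and the paper offers only partial evidence for it, namely \autoref{CMcase} (divisibility of $T(X_v)$ at almost all ordinary places for the self-product of a CM elliptic curve, deduced from \autoref{CM1}), and a compatibility argument for two non-isogenous curves via Poitou--Tate duality and Faltings' isogeny theorem, which is conditional on the conjectural exactness of a complex of Kato--Saito. So there is no ``paper proof'' to match your outline against, and your outline is not a proof either --- as you yourself say: the supersingular places where \autoref{ordsszeta} only bounds the finite summand, the case of two factors that are simultaneously supersingular at infinitely many places, the unramified cases where only the Mackey-functor statement \autoref{kernelord} is available and one would need the Weil reciprocity relation to kill the Somekawa group, and above all the absence of any substitute for the Raskind--Spiess filtration for a general smooth projective $X$, are genuine open gaps; they are precisely why the statement remains a conjecture.

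Beyond these acknowledged gaps, two steps of your sketch misstate what the paper actually provides. First, \autoref{morecurves} is not unconditional: it assumes that at most one factor is supersingular and that supersingular factors have their $p$-torsion rational, so at a place with $n_v=0$ and several supersingular factors even your reduction of $F_v$ to the two-factor $K$-groups is not automatic (this is the same obstruction as in \autoref{maintheoremintro}). Second, your treatment of the CM case inverts the paper's logic: by \cite[A.2.4]{Serre1998} the Serre--Tate parameter of a CM curve at an ordinary place is \emph{trivial}, so the extension $K_v(E[p])/K_v$ is unramified, the wild-ramification hypothesis of \autoref{ordord} and \autoref{structure1} fails, and \autoref{CM1} is a separate argument (a norm computation over $K_v(\mu_p)$ using $e_0(K_v)=1$ for $v$ unramified over $p$ and torsion-freeness of the formal group), not a verification of your ``genericity.'' Relatedly, in the non-CM case the wild ramification of $K_v(E_i[p])/K_v$ for almost all ordinary $v$ is a statement about the local class of the Serre--Tate parameter in $\mathcal{O}_{K_v}^\times/(\mathcal{O}_{K_v}^\times)^p$; it does not follow from surjectivity of the global mod-$p$ representation, and as far as I know it is itself open. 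So your strategy is a sensible reading of the paper's local machinery, but it neither reproduces the paper's (heuristic) evidence nor closes the conjecture.
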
 

We already saw that even the local picture has only been  established in very few cases. However, we saw that for $X=E_1\times\cdots\times E_d$ a product of elliptic curves over a $p$-adic field $K_v$, \autoref{CTconj} has been established in most cases. We will focus on this case to provide two pieces of evidence towards \autoref{almostall}. 
\subsection*{Complex multiplication}
The first evidence comes from \autoref{ordord}. In fact, the following is a direct corollary of \autoref{CM1}
\begin{cor}\label{CMcase} Let $X=E\times E$ be the self-product of an elliptic curve over an algebraic number field $K$. Assume that $E$ has complex multiplication by an imaginary quadratic field $M$. Then the Albanese kernel,  $T(X_v)$, is divisible for almost all ordinary reduction places $v$ of $K$. 
\end{cor}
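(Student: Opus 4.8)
The plan is to deduce \autoref{CMcase} directly from \autoref{CM1} by reducing the global statement to a local statement at almost all places of good ordinary reduction. First I would recall that, since $E$ has complex multiplication by an imaginary quadratic field $M$, the completion $K_v$ of $K$ at a finite place $v$ is a $p$-adic field (where $p$ is the residue characteristic of $v$), and the base change $E_v := E \times_K K_v$ is again an elliptic curve with complex multiplication by $M$. By the standard reduction theory for CM elliptic curves, the curve $E$ has potentially good reduction everywhere; in particular, at all but finitely many places $v$, the curve $E_v$ has good reduction. Among these, the ordinary reduction places $v$ are exactly those for which the rational prime $p$ below $v$ splits (or is otherwise suitably behaved) in $M$; the key point for us is simply that at such a place $E_v$ has good ordinary reduction, so that $X_v = E_v \times E_v$ is the self-product of an ordinary-reduction elliptic curve.

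Next I would arrange that the hypothesis of \autoref{CM1} — namely that $K_v$ be unramified over $\Q_p$ — holds for almost all $v$. This is automatic: a number field $K$ is ramified over $\Q$ at only finitely many rational primes, so for all but finitely many finite places $v$ of $K$, the local field $K_v$ is unramified over $\Q_p$. Excluding also the finitely many places of bad reduction, we are left with a cofinite set $S$ of ordinary reduction places $v$ at which simultaneously $E_v$ has good ordinary reduction, $E_v$ has complex multiplication by $M$, and $K_v/\Q_p$ is unramified. For each $v \in S$, \autoref{CM1} applies verbatim to the product $E_v \times E_v$ and gives that $T(X_v) = T(E_v \times E_v)$ is divisible, i.e.\ the finite summand $F_v$ vanishes.

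Finally I would assemble these observations: \autoref{maintheoremintro} (or the discussion preceding \autoref{structure1}) guarantees that $T(X_v) \simeq D_v \oplus F_v$ with $D_v$ divisible and $F_v$ finite at every ordinary reduction place, and the argument above shows $F_v = 0$ for all $v$ in the cofinite set $S$. Hence $T(X_v)$ is divisible for almost all ordinary reduction places $v$ of $K$, which is exactly the assertion of \autoref{CMcase}. The only mild subtlety — and the step I would be most careful about — is the bookkeeping identifying which good-reduction places are ordinary and checking that the remaining finitely many exceptional places (bad reduction, ramification over $\Q_p$, supersingular reduction) can all be discarded without affecting an ``almost all'' statement; this is routine given the CM theory but worth stating cleanly, since it is the only place where global input enters.
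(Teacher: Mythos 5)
Your proposal is correct and follows essentially the same route as the paper: the paper's proof likewise deduces the statement directly from \autoref{CM1}, using only the observation that $K_v/\Q_p$ is unramified for all but finitely many places $v$ (together with the standard facts that a CM curve has good ordinary reduction at the ordinary places in question). Your additional bookkeeping about discarding the finitely many bad-reduction or ramified places is exactly the implicit content of the paper's one-line argument.
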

\begin{proof} Let $v$ be an ordinary reduction place of $X$. Assume that $v$ lies above a rational prime $p$.
The corollary follows immediately from \autoref{CM1} after we observe that for all but finitely many $v$ the extension $K_v/\Q_p$ is unramified.

\end{proof}
\subsection*{Local-to-global expectations} Our second motivation for \autoref{almostall} comes from a local-to-global conjecture  for zero-cycles and its compatibility with a famous conjecture of Beilinson (\cite{Beilinson1984}) and Bloch (\cite{Bloch1984}). The latter predict that for a  smooth projective geometrically connected variety $X$ over a number field $K$, the Albanese kernel $T(X)$ is a finite group.  On the other hand, the Brauer-Manin pairing gives rise to a complex 
\[\widehat{A_0(X)}\stackrel{\Delta}{\longrightarrow}
\widehat{A_{0,\mathbf{A}}(X)}\rightarrow\Hom(Br(X)/Br(K),\Q/\Z),\] where for an abelian group $M$ we denote by $\widehat{M}:=\varprojlim_{n} M/nM$. 
The \textit{adelic Chow group}, $A_{0,\mathbf{A}}(X)$ is defined to be $\prod_{v\text{ finite}}A_0(X_v)$ when $K$ is a totally imaginary number field. In the general case it has a more complicated expression. It was  originally proposed by Colliot-Th\'{e}l\`{e}ne and Sansuc (\cite[Section 4]{Colliot-Thelene/Sansuc1981}) that for geometrically rational varieties  the above complex is exact. This conjecture was later generalized to arbitrary varieties by
\color{black}
 Kato and Saito  (\cite[7.6.2]{Kato/Saito1983}, see also \cite[p. 394]{Saito1989}). For more information on the local-to-global conjecture we refer to \cite[Conj\'{e}ctures 1.5]{Colliot-Thelene1995} and \cite[Section 2.6]{Wittenberg2018}. 

From now on suppose $X=E_1\times E_2$ is the product of two elliptic curves over $K$. Restricting the above complex to the Albanese kernel gives rise to a complex, 
\[\widehat{K(K;E_1,E_2)}\stackrel{\Delta}{\longrightarrow}
\widehat{K(K;E_{1v}, E_{2v})}\rightarrow\Hom(Br(X)/Br_1(X),\Q/\Z),\] where $Br_1(X):=\ker(Br(X)\rightarrow Br(X_{\overline{K}}))$. By a result of Skorobogatov and Zarhin (\cite{Skorogogatov/Zharin2014}) the quotient $Br(X)/Br_1(X)$ is finite. Assuming \autoref{CTconj} is true for $X$, it implies that the middle term of the complex is an infinite product of finite groups. The only way that all three conjectures are compatible with each other is if in the group $\widehat{K(K;E_{1v}, E_{2v})}$ only finitely many places give nontrivial contribution. That is, the group $K(K;E_{1v}, E_{2v})$ should be divisible for all but finitely many places $v$ of $K$.

\vspace{3pt}

\vspace{10pt}

\bibliographystyle{amsalpha}

\bibliography{bibfile}

\end{document}